\documentclass[11pt, final]{article}
\usepackage{a4}
\usepackage{amsmath}%
\usepackage{amstext}%
\usepackage{amssymb}%
\usepackage{amsthm}%
\usepackage{amsxtra}
\usepackage{graphicx}
\usepackage{showkeys}%
\usepackage{epsfig}%
\usepackage{cite}
\usepackage{tikz}
\usepackage{caption}
\usepackage{float}
\usepackage{longtable}
\usepackage{threeparttable,booktabs,multirow,lscape}
\usepackage{lipsum}
\usepackage{rotating}
\usepackage{multirow}
\usepackage{booktabs}
\usepackage{amsmath}
\usepackage{amssymb}
\usepackage{graphicx}
\usepackage{graphicx}
\usepackage{subfigure}
\usepackage{listings}
\usepackage[ruled,vlined]{algorithm2e}

\usepackage{soul}
\usepackage{xcolor}
\usepackage[pdftex,dvipsnames]{xcolor}  %
\definecolor{mydarkblue}{rgb}{0, 0, 0.5}
\usepackage{hyperref}
\hypersetup{
    colorlinks=true,
    linkcolor=mydarkblue,  
    citecolor=mydarkblue,  
    filecolor=magenta,      
    urlcolor=cyan,
}
\usepackage{cleveref}

\newtheorem{theorem}{Theorem}[section]

\newtheorem{assumption}[theorem]{Assumption}

\newtheorem{fact}[theorem]{Fact}

\newtheorem{corollary}[theorem]{Corollary}

\newtheorem{lemma}[theorem]{Lemma}

\newtheorem{proposition}[theorem]{Proposition}
\newtheorem{rem}[theorem]{Remark}
\newenvironment{remark}{\rem\rm}{\endrem}

\newcounter{unnumber}

\newcommand{\zv}{\mathbf{z}}%
\newcommand{\xv}{\mathbf{x}}%
\newcommand{\yv}{\mathbf{y}}%
\newcommand{\uv}{\mathbf{u}}%
\newcommand{\bv}{\mathbf{b}}%
\newcommand{\cv}{\mathbf{c}}%
\newcommand{\gv}{\mathbf{g}}%
\newcommand{\R}{\mathbb{R}}%
\newcommand{\N}{\mathbb{N}}%

\DeclareMathOperator*\fix{Fix}

\title{Fixed-Point Delayed Subgradient Methods for Nonsmooth Convex Optimization Problems}

\sethlcolor{yellow} 

\author{Ontima Pankoon\thanks{Department of Mathematics, Faculty of Science, Khon Kaen University, Khon Kaen 40002, Thailand, email: ontimapa@kkumail.com},
	\and	
	Nimit Nimana\thanks{Department of Mathematics, Faculty of Science, Khon Kaen University, Khon Kaen 40002, Thailand,
		email: nimitni@kku.ac.th},
        \and
        Yeol Je Cho\thanks{Department of Mathematics Education, Gyeongsang National University, Jinju 52828, Korea, and
              Center for General Education, China Medical University, Taichung, 40402, Taiwan,
              email: yjchomath@gmail.com}
}

\begin{document}
	\maketitle
	\begin{abstract}
 In this paper,   we consider the nonsmooth convex optimization problems over the fixed point constraint sets of firmly nonexpansive operators. To find an optimal solution of the problem, we present an iterative method based on the hybrid steepest descent method and the idea of a delayed subgradient scheme in which allows the use of staled subgradients from the earlier iteration when updating the next iteration. We start the convergence part by deriving an upper bound for the difference of the best-achieved function values and the optimal value. After that, to ensure the convergence in iterations, we prove that there exists a subsequence of the generated sequence by the proposed method which converges to an optimal solution. Moreover, we subsequently show that the whole generated sequence converges to an optimal solution when the strict convexity of the objective function is imposed. We further extend the presented results to the centralized network system consisting of a finite number of workers and a central server. Finally, we apply the proposed method to image inpainting problems. The numerical results describe the effect of delay in many cases of objective functions.

\vspace{0.1cm}
	
\textbf{Key words:} 
	Convex Optimization, Delay, Firmly Nonexpansive, Fixed Point, Nonsmooth Optimization, Subgradient Method.
    
\textbf{MSC Classification:} 47H10;  47J25; 47N10; 90C25.
	\end{abstract}


\section{Introduction}

In this paper, we focus on the solving of a {\it minimization problem} of a convex function over a fixed-point constraint  in the following form:
\begin{align}\label{main-pb}
	\begin{array}{ll}
		\textrm{minimize }\indent f(\xv)\\
		\textrm{subject to}\indent \xv\in \displaystyle\fix (T),
	\end{array}%
\end{align}	
where the objective function $f:\mathbb{R}^d \to \R$ is convex and the constrained operator $T:\mathbb{R}^d \to \mathbb{R}^d$ is firmly nonexpansive with    $\displaystyle\fix T:=\{\xv\in\mathbb{R}^d:T\xv=\xv\} \neq \emptyset$. For simplicity of the notation, we will denote the optimal value and the solution set of the problem (\ref{main-pb}) by $f^*$ and $\mathcal{S}$, respectively.
The convex minimization problem with fixed point constraint as in the problem (\ref{main-pb}) appeared in, not only the key tool in solving minimization problem with the constraint is given in the form of the intersection of a finite number of closed convex sets $C_j\subset \R^d, j=1,\ldots,m$:
\begin{align*}
	\begin{array}{ll}
		\textrm{minimize }\indent f(\xv)\\
		\textrm{subject to}\indent \xv\in \displaystyle C:=\bigcap_{j=1}^mC_j,
	\end{array}%
\end{align*}
by setting an appropriate nonlinear operator $T$ such that $\fix T = C$ \cite{C-12,I-15,I-16,Y-01,YOS-02}, but also including many practical bilevel optimization problem in which minimizing a convex function over a solution of a nonlinear problem, such as a solution of equilibrium problem \cite{BO-94,CH-05}, a zero of sum of monotone inclusion problem, a solution of variational inequality problem and a solution of convex minimization problem (see \cite[Subsection 26.1]{BC-11} and \cite[Section 5]{N-20} for further discussion). Moreover, it also includes some practical applications in the support vector machine learning \cite{PPPN-23}, and image inpainting problem as shown in Section \ref{Image Inpainting Problems}. 

In the case when the objective function $f$ is $\eta$-strongly convex and $L$-smooth, the simplest method that can solve the problem (\ref{main-pb}) is the celebrated {\it projection gradient method} \cite{E-85} which is defined by
\begin{equation}\label{PGM}
    \begin{cases}
    \xv_0 \in C \text{ is arbitrary given, } \\
    \xv_{n+1}= P_{C}(\xv_n-\mu \nabla f (\xv_n)),
    \end{cases}   
\end{equation}
where $\mu \in \left( 0,  \frac{2\eta}{L^2} \right)$ and $\nabla f$ is gradient of $f$.
It is well known that the sequence $\{\xv_n\}_{n=0}^\infty$ generated by \eqref{PGM} converges to the unique solution of the minimization problem. However, the limitation of this method is that it requires a closed-form expression of the metric projection onto a constrained set $C$, which can be very challenging or it may be impossible to deal with.

In 2001, Yamada \cite{Y-01} introduced the {\it hybrid steepest descent method} (shortly, HSDM)  which uses the nonexpansive operator $T$ in which $\fix T=C$ instead of the projection operator $P_C$ directly. This method is defined by
\begin{equation}\label{Yamada2001}
    \begin{cases}
    \xv_0 \in \mathbb{R}^d \text{ is arbitrary given, } \\
    \xv_{n+1}= T\xv_n -\mu \alpha_n \nabla f(T\xv_n),
    \end{cases}   
\end{equation}
where $\mu \in \left( 0,  \frac{2\eta}{L^2} \right)$, $ \{\alpha_n \}_{n=0}^\infty \subset (0,1]$.
The convergence of the HSDM to the unique solution of the problem is guaranteed when the step size $\alpha_n$ is diminishing with $\lim_{n\to \infty}(\alpha_n -\alpha_{n+1})/\alpha_n^2=0$.
After the starting point of HSDM, there are many authors investigated generalizations and applications of HSDM. For example, Yamada and Ogura \cite{YO-05} applied HSDM for the wilder class of the operator $T$ which is the so-called quasi-shrinking operator.  Cegielski \cite{C-15} proposed a {\it generalized hybrid steepest descent method} by using the sequence of quasi-nonexpansive operators, to name but a few. The annotated bibliography which collects some of the research works associated with HSDM and the development of HSDM in the case when the objective function is smooth and strongly convex and the considered constraint sets are the fixed point sets of some nonlinear operators is referred to the recent work of Prangprakhon and Nimana \cite{PN-23}.


For the nonsmooth setting, in 2016, Iiduka \cite{I-16} considered a nonsmooth convex optimization problem with a fixed-point constraint which is motivated by a networked system with a finite number of workers and assumed that each worker tries to minimize its own objective function over its own constraint set.  The problem can be formulated formally as in the form of optimizing the sum of nonsmooth convex objectives over the intersection of fixed-point constraints:
\begin{align}\label{main pb dist}
	\begin{array}{ll}
		\textrm{minimize }\indent f(\xv) :=\displaystyle\sum_{j=1}^m f_j(\xv)\\
		\textrm{subject to}\indent \xv\in \displaystyle \bigcap_{j=1}^m \fix T_j,
	\end{array}%
\end{align}
where $f_j:\mathbb{R}^d \to \mathbb{R}$ is a nonsmooth convex function and $T_j:\mathbb{R}^d \to \mathbb{R}^d$ is certain quasi-nonexpansive operator, for all $j=1, \ldots, m$. 
In order to solve this problem (\ref{main pb dist}), Iiduka proposed the {\it parallel subgradient method} which is given by
\begin{equation}\label{Iduka2016}
    \begin{cases}
    \xv_0 \in \mathbb{R}^d \text{ is arbitrary chosen, } \\
    \xv_{n,j} = T_j \xv_n -\alpha_n \widetilde{\nabla}f_j(T_j \xv_n), \\
    \xv_{n+1} = \displaystyle \frac{1}{m}\sum_{j=1}^m \xv_{n,j}.
    \end{cases}   
\end{equation}
Of course, under the assumptions that the step size 
$\{\alpha_{n}\}_{n=0}^\infty$ is a diminishing sequence and the sequences  $\{\xv_{n,j}\}_{n=0}^\infty$ is bounded for all $j=1, \ldots, m$, 
the existence of a subsequence $\{\xv_{n_k}\}_{k=0}^\infty$ of the sequence $\{\xv_n\}_{n=0}^\infty$ generated by (\ref{Iduka2016}) that converges to an optimal solution of the problem (\ref{main pb dist}) is guaranteed. Moreover, if there is a function $f_j$ is strictly convex, it had been proved that the whole sequence $\{\xv_n\}_{n=0}^\infty$ converges to the unique solution of the considered problem (\ref{main pb dist}). Some further improvements on the iterative schemes for solving the problems (\ref{main pb dist}) are, for instance, 
the incremental scheme proposed \cite{I-16-incremental}, the proximal point schemes \cite{EAN-21,I-16-P}, the stochastic fixed point subgradient method \cite{I-19} which can be seen as the stochastic version of the method (\ref{Iduka2016}).

On the other hand, it is well known that the subgradient type methods often struggle in the computationally expensive or impractical in some particular situations of the objective function. A simple technique for tackling with this is to allow the usage of the previously obtained information which is known as {\it delayed information} instead of spending time computing the new information. This technique is particularly beneficial and has been considered when proposing iterative methods for solving not only large-scale convex optimization problems (for instance, incremental methods \cite{A-19,FAN-25,GOP-17,TY-14,VGO-18} and distributed methods \cite{ANN-23,NPN-23}), but also non-convex optimization problem \cite{PN-25}.

In this paper, we propose a method, namely, the Fixed-Point Delayed Subgradient Method (FDSM), to solve the problem (\ref{main-pb}). This method is based on the ideas of HSDM, the parallel subgradient method (\ref{Iduka2016}), and the delayed subgradient scheme. The strategy of the delayed subgradient scheme is it allows the use of staled subgradients' information from the earlier iteration when updating the next iteration. The presence of delayed subgradients is very practical since it helps in reducing the cost computations when subgradient evaluation in each iteration is very expansive. Subsequently, we present convergence results of the proposed method. We also propose a distributed version of FDSM for solving the centralized network system problem (\ref{main-pb}). We then apply the proposed method to solve the image inpainting problem. 

The paper is organized as follows: Section \ref{Preliminary} contains the mathematical tools for analyzing the convergence of the proposed method.
Section \ref{Method and Convergent Results} presents the FDSM and convergence results, including discussing sufficient assumptions for convergence results. This section is divided into two subsections. In fact, we will establish the rate of convergence of the generated sequence of function values in the first subsection. We will show that the optimal value can be approximated by the best-achieved function values and, by considering a specific choice of the step size, we derive the upper bound of the differnce of the sequence of best achieved function values to the optimal value. In the second subsection, we will show the convergences of a subsequence of the generated sequence to the optimal solution of the problem. Moreover, we will show that, by imposing the strict convexity of the objective function, the whole generated sequence converges to an optimal solution of the considered problem.
In Section \ref{distributed}, we will consider a distributed version of FDSM for solving a particular situation of the problem (\ref{main-pb}). We also show the convergence results for this case.
In Section \ref{Image Inpainting Problems}, we will show a numerical experiment of applications of our method with image inpainting problems.
Finally, we will give a concluding remarks.

\section{Preliminaries}\label{Preliminary}

In this section, we recall some necessary notations, definitions and useful facts that will be utilized in this work.
\vskip 2mm

Let $\mathbb{R}^d$ be an Euclidean space with the inner product $\langle \cdot,\cdot\rangle$ and its induce norm $\Vert \cdot \Vert$ and let   $\mathbb{N}_0$ be the set of all non-negative integers.  Let $T:\mathbb{R}^d \to \mathbb{R}^d$ be an operator. 
We say that $T$ is a {\it firmly nonexpansive} operator \cite[Definition 2.2.1]{C-12} if
$\langle T\xv-T\yv,\xv-\yv \rangle \geq \Vert T\xv-T\yv \Vert^2$ for all $\xv, \yv \in \mathbb{R}^d.$
The {\it firmly nonexpansive operator} provides us some  properties which will be helpful tools for proving the convergence results in the next sections. The readers may consult the book of Cegielski \cite[Theorem 2.2.4, Theorem 2.2.5, Corollary 4.28, Lemma 3.2.5]{C-12} for proving details and discussions.

\begin{fact}\label{fne-fact}
Let $T:\mathbb{R}^d \to \mathbb{R}^d$ be a firmly nonexpansive operator. Then the following statements hold:
\begin{enumerate}
    \item[\rm(1)] $T$ is a nonexpansive operator, i.e., $\Vert T\xv-T\yv \Vert \leq \Vert \xv-\yv \Vert$ for all $\xv, \yv \in \mathbb{R}^d;$ 
    \item[\rm(2)] If  $\fix T \neq \emptyset$, then $T$ is a cutter, i.e., $\langle \xv-T\xv, \zv-T\xv \rangle \leq 0$ for all $\xv\in \mathbb{R}^d$ and $\zv\in \fix T;$ 
    \item[\rm(3)] $\fix (T)$ is closed and convex; 
    \item[\rm(4)] If  $\fix T \neq \emptyset$, then $T$ satisfies the demi-closedness principle, i.e., for any sequence $\{\xv_n\}_{n=0}^\infty \subset \mathbb{R}^d$ such that $\displaystyle \lim_{n\to \infty}\xv_n=\yv \in \mathbb{R}^d$ and $\displaystyle \lim_{n\to \infty}\Vert T\xv_n-\xv_n \Vert =0,$ we have $\yv\in \fix T.$ 
\end{enumerate}
\end{fact}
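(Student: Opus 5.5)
We prove the four items in order, working only from the defining inequality $\langle T\xv-T\yv,\xv-\yv\rangle \geq \Vert T\xv-T\yv\Vert^2$ together with Cauchy--Schwarz and elementary manipulations.

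For (1), apply Cauchy--Schwarz to the left-hand side of the defining inequality to get $\Vert T\xv-T\yv\Vert^2 \le \Vert T\xv-T\yv\Vert\,\Vert \xv-\yv\Vert$. If $T\xv\neq T\yv$, divide by $\Vert T\xv-T\yv\Vert$; otherwise the claim is trivial.

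For (2), fix $\zv\in\fix T$ and substitute $\yv=\zv$, so that $T\yv=\zv$. This gives $\langle T\xv-\zv,\xv-\zv\rangle\ge \Vert T\xv-\zv\Vert^2$. Write $\xv-\zv=(\xv-T\xv)+(T\xv-\zv)$ and expand the inner product. The term $\Vert T\xv-\zv\Vert^2$ cancels, leaving $\langle T\xv-\zv,\xv-T\xv\rangle\ge 0$, which is exactly $\langle \xv-T\xv,\zv-T\xv\rangle\le 0$.

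For (3), closedness follows from continuity of $T$, which is (1): $\fix T$ is the preimage of $\{0\}$ under the continuous map $\xv\mapsto T\xv-\xv$. The empty set is trivially convex. For convexity when $\fix T\neq\emptyset$, use (2). The set $\fix T$ equals $\bigcap_{\xv\in\R^d}\{\zv:\langle \xv-T\xv,\zv-T\xv\rangle\le 0\}$.
\begin{itemize}
\item The inclusion $\subseteq$ is (2).
\item For $\supseteq$, take $\xv=\zv$ in the condition to get $-\Vert \zv-T\zv\Vert^2\le 0$; this alone is not informative. Instead, the half-space condition with $\xv=\zv$ reads $\langle \zv-T\zv,\zv-T\zv\rangle\le 0$, i.e. $\Vert \zv-T\zv\Vert^2\le0$, so $\zv=T\zv$.
\end{itemize}
Each set in the intersection is a closed half-space (or all of $\R^d$ when $\xv=T\xv$). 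An intersection of half-spaces is closed and convex.

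For (4), let $\xv_n\to\yv$ with $\Vert T\xv_n-\xv_n\Vert\to0$. By (1), $T$ is continuous, so $T\xv_n\to T\yv$. Hence $\Vert T\yv-\yv\Vert=\lim_n\Vert T\xv_n-\xv_n\Vert=0$, and $\yv\in\fix T$. The finite-dimensional setting makes this immediate, and the hypothesis $\fix T\ne\emptyset$ is not even needed.

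The only step needing care is the reverse inclusion in (3): one must test the half-space condition at the point $\xv=\zv$ itself. Everything else is routine algebra.
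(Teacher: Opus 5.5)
Your proof is correct. The paper gives no proof of this fact; it only cites Cegielski's book and a demiclosedness corollary. Your arguments for (1)--(3) are the standard textbook ones: Cauchy--Schwarz for nonexpansiveness, the substitution $\yv=\zv\in\fix T$ for the cutter property, and the representation $\fix T=\bigcap_{\xv}\{\zv:\langle \xv-T\xv,\zv-T\xv\rangle\le 0\}$ as an intersection of closed half-spaces, which gives closedness and convexity.

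The one place where your route genuinely differs from the cited sources is (4). There the demiclosedness principle is a Hilbert-space statement about \emph{weakly} convergent sequences and needs an Opial-type or monotone-operator argument. The version stated here assumes norm convergence in $\R^d$, so continuity of $T$ settles it, and, as you note, the hypothesis $\fix T\neq\emptyset$ is superfluous.

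One cleanup: delete the first sentence of your second bullet in (3). With $\xv=\zv$ the inner product $\langle \zv-T\zv,\zv-T\zv\rangle$ equals $+\Vert \zv-T\zv\Vert^2$, not $-\Vert \zv-T\zv\Vert^2$. Your next sentence already gives the correct and decisive computation.
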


Let $X\subset \mathbb{R}^d$ be nonempty closed convex set and $\xv \in \mathbb{R}^d$. 
The {\it metric projection} of $\xv$ onto $X$, denoted by $P_X\xv$, is the point in which  
$\Vert P_X\xv -\xv \Vert = \inf_{\yv \in X}\Vert \yv-\xv \Vert.$
It should be noted that the metric projection $P_X\xv$ always exists and unique for all $\xv\in \mathbb{R}^d$ \cite[Theorem 1.2.3]{C-12}. 
 Moreover, the operator $P_X:\R^d\to X$ is called the {\it metric projection} onto $X$ \cite[Definition 1.2.1]{C-12}. 
 Moreover, the metric projection $P_X$ is a firmly nonexpansive operator with $\fix P_X = X$ \cite[Theorem 2.2.21 (iii)]{C-12}.

Let  $f:\mathbb{R}^d \to \mathbb{R}$ be a real-valued function. The function $f$ is said to be {\it convex} \cite[Definition 1.1.48]{C-12} if
  $
f((1-\lambda)\xv + \lambda \yv) \leq (1-\lambda)f(\xv)+\lambda f(\yv)$ for all $\xv, \yv \in \mathbb{R}^d$ and $\lambda \in [0,1].$
  The function $f:\mathbb{R}^d \to \mathbb{R}$ is said to be {\it strictly convex} \cite[Definition 1.1.48]{C-12} if
 $f((1-\lambda)\xv + \lambda \yv) < (1-\lambda)f(\xv)+\lambda f(\yv)$ for all $\xv, \yv \in \mathbb{R}^d$ and $\lambda \in (0,1).$
It is obvious that any strictly convex function is a convex function. 

For any $\xv\in\R^d$, the {\it subdifferential} of $f$ at $\xv$ \cite[Definition 1.1.55]{C-12} is the set
$\partial f(\xv) := \{ \gv\in \mathbb{R}^d : \langle \gv,\yv-\xv\rangle \leq f(\yv)-f(\xv),\forall\yv\in \mathbb{R}^d \}.$
The point $\gv\in \partial f(\xv)$ is called {\it subgradient} of $f$ at $\xv$, which is denoted by $\widetilde{\nabla}f(\xv)$.

The following facts demonstrate the properties of convex functions related to its subdifferential set. The readers may consult the book of Beck \cite[Theorem 3.14, Theorem 3.9]{B-17} and \cite[Proposition 16.20]{BC-11} for further details. 

\begin{fact}\label{sg-bbd}
Let $f: \mathbb{R}^n \to \mathbb{R}$ be a convex function  and let $\xv\in \mathbb{R}^d$ be given. Then the following holds:
          \begin{enumerate}
              \item[\rm(1)] $\partial f(\xv)$ is a nonempty closed and convex set; 
              \item[\rm(2)] If  $X \subset \mathbb{R}^d$ is nonempty and bounded, then $\partial f(\xv)$ is a nonempty and bounded set for all $\xv \in X.$
          \end{enumerate}
\end{fact}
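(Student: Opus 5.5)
The plan is to treat the two items separately: item (1) comes from the definition of $\partial f(\xv)$ plus a supporting-hyperplane argument at a boundary point of the epigraph, and item (2) from a local Lipschitz-type estimate that uses the continuity of real-valued convex functions on $\mathbb{R}^d$. I read item (2) in its natural uniform sense: the set $\bigcup_{\xv\in X}\partial f(\xv)$ is bounded, which also gives boundedness of each $\partial f(\xv)$.

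For (1), closedness and convexity are immediate. By definition,
\begin{equation*}
\partial f(\xv)=\bigcap_{\yv\in\mathbb{R}^d}\{\gv\in\mathbb{R}^d:\langle \gv,\yv-\xv\rangle\le f(\yv)-f(\xv)\},
\end{equation*}
and each set in this intersection is a closed half-space (or all of $\mathbb{R}^d$ when $\yv=\xv$). Nonemptiness is the real content, and I expect it to be the main obstacle.

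Since $f$ is convex and finite on all of $\mathbb{R}^d$, it is continuous. Hence the epigraph $\mathrm{epi}\,f=\{(\yv,t):f(\yv)\le t\}$ is a closed convex set with nonempty interior, and $(\xv,f(\xv))$ lies on its boundary. A supporting hyperplane at this point gives $(\mathbf{a},\beta)\neq(\mathbf{0},0)$ with
\begin{equation*}
\langle \mathbf{a},\yv-\xv\rangle+\beta(t-f(\xv))\le 0 \quad\text{for all } (\yv,t)\in\mathrm{epi}\,f .
\end{equation*}
Letting $t\to\infty$ forces $\beta\le 0$. The key step is excluding $\beta=0$. If $\beta=0$, then $\langle\mathbf{a},\yv-\xv\rangle\le 0$ for every $\yv\in\mathbb{R}^d$, because the domain of $f$ is the whole space. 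Taking $\yv=\xv+\mathbf{a}$ gives $\mathbf{a}=\mathbf{0}$, a contradiction. So $\beta<0$. Dividing by $-\beta$ and setting $t=f(\yv)$ shows that $\gv:=\mathbf{a}/(-\beta)$ satisfies the subgradient inequality, so $\gv\in\partial f(\xv)$.

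For (2), let $X$ be nonempty and bounded, and set $K:=\{\yv:\mathrm{dist}(\yv,X)\le 1\}$, which is compact. By continuity, $M:=\max_{K}|f|<\infty$. Take $\xv\in X$ and $\gv\in\partial f(\xv)$ with $\gv\neq\mathbf{0}$, and put $\yv=\xv+\gv/\Vert\gv\Vert\in K$. The subgradient inequality gives
\begin{equation*}
\Vert\gv\Vert=\langle\gv,\yv-\xv\rangle\le f(\yv)-f(\xv)\le 2M .
\end{equation*}
Hence $\Vert \gv\Vert\le 2M$ for every subgradient at every point of $X$, and nonemptiness follows from (1). The only ingredient beyond elementary manipulation is the continuity (and hence local boundedness) of finite convex functions, which is standard; this is why these statements are recorded as a Fact with references to \cite{B-17,BC-11}.
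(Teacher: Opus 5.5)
Your proof is correct and is essentially the standard argument behind the results the paper cites in place of a proof (Beck, Theorems 3.9 and 3.14; Bauschke--Combettes, Proposition 16.20). There, too, a non-vertical supporting hyperplane to $\mathrm{epi}\,f$ at $(\xv,f(\xv))$ gives nonemptiness, and testing the subgradient inequality at $\xv+\gv/\Vert\gv\Vert$ against a bound on $f$ over a compact neighbourhood gives boundedness. Your use of $\mathrm{dom}\,f=\mathbb{R}^d$ to exclude $\beta=0$ is a harmless simplification of the textbook argument, which works with extended-valued functions and needs a local-Lipschitz step instead. Your uniform reading of (2), that $\bigcup_{\xv\in X}\partial f(\xv)$ is bounded, is also the version the paper relies on later when it deduces bounded subgradient sequences from bounded iterates.
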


 The following fact relating to the locally Lipschitz property of a convex function, see the book of Bauschke and Combettes \cite[Proposition 16.17]{BC-11} for more details.

\begin{fact}\label{f-Lip}
Let $f:\mathbb{R}^d \to \mathbb{R}$ be a convex function. If $X\subset \mathbb{R}^d$ is a nonempty bounded set, then $f$ is Lipschitz continuous relative to $X$, i.e., there exists $L>0$ such that
$\vert f(\xv)-f(\yv)\vert \leq L\Vert \xv-\yv\Vert$ for all $\xv,\yv \in X.$  
\end{fact}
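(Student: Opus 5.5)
The plan is to read Fact \ref{f-Lip} as a consequence of the local boundedness of subgradients, which is Fact \ref{sg-bbd}(2), combined with the subgradient inequality. The key intermediate claim is the following. If $X$ is bounded, then the set $\bigcup_{\xv\in X}\partial f(\xv)$ is bounded. Fact \ref{sg-bbd}(2) as stated only gives boundedness of each $\partial f(\xv)$ separately, so I will establish this uniform version directly.

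Since $X$ is bounded, I pick $r>0$ with $X\subset B(0,r)$ and set $K:=\overline{B}(0,r+1)$. A convex function $f:\R^d\to\R$ is continuous. One way to see this is that $f$ is bounded above on any simplex by its maximum over the simplex's vertices, and convexity then gives a lower bound as well. Hence $M:=\sup_{K}f$ and $m:=\inf_{K}f$ are finite.

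Now take $\xv\in X$ and $\gv\in\partial f(\xv)$ with $\gv\neq 0$. I set $\yv:=\xv+\gv/\Vert\gv\Vert$, which lies in $K$. The subgradient inequality then gives
\[
\Vert \gv\Vert=\langle \gv,\yv-\xv\rangle\le f(\yv)-f(\xv)\le M-m=:L .
\]
This holds uniformly over all $\xv\in X$ and all $\gv\in\partial f(\xv)$. If $M=m$, I may replace $L$ by any positive number.

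Finally, for $\xv,\yv\in X$ I take $\gv\in\partial f(\xv)$, which exists by Fact \ref{sg-bbd}(1). The subgradient inequality gives $f(\xv)-f(\yv)\le\langle \gv,\xv-\yv\rangle\le L\Vert\xv-\yv\Vert$. Swapping the roles of $\xv$ and $\yv$ gives the reverse inequality, so $\vert f(\xv)-f(\yv)\vert\le L\Vert \xv-\yv\Vert$.

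The only step that needs care is the finiteness of $M$ and $m$ on $K$, that is, the continuity of a finite convex function on $\R^d$. I will take this from the standard references (\cite{BC-11}, \cite{B-17}) rather than reproving it. If a self-contained argument is wanted, the simplex-vertex bound gives the upper bound, and the lower bound follows from $f(\xv)\ge 2f(\mathbf{c})-f(2\mathbf{c}-\xv)$, where $\mathbf{c}$ is the centre of the simplex.
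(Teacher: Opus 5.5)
Your proof is correct, and it is the standard argument behind the result that the paper does not prove but only cites from \cite[Proposition 16.17]{BC-11}: bound $f$ on the enlarged ball $K$, use $\yv=\xv+\gv/\Vert\gv\Vert\in K$ to get the uniform bound $\Vert\gv\Vert\le M-m$ on $\bigcup_{\xv\in X}\partial f(\xv)$, and then apply the subgradient inequality in both directions. You were right not to rely on Fact~\ref{sg-bbd}(2), which as stated only gives a pointwise bound; the one loose spot is your optional lower-bound remark, where you need $2\mathbf{c}-\xv$ to stay in the simplex for every $\xv\in K$, and this is automatic if you take $\mathbf{c}=\mathbf{0}$, the centre of $K$ (alternatively, $f(\xv)\ge f(\mathbf{0})+\langle\gv_0,\xv\rangle\ge f(\mathbf{0})-(r+1)\Vert\gv_0\Vert$ for any $\gv_0\in\partial f(\mathbf{0})$).
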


\section{Methods and Convergent Results} \label{Method and Convergent Results}

In this section, we will start by proposing the main iterative method for solving the problem (\ref{main-pb}) and analyze its convergence properties.

\begin{assumption}\label{assumption_tau}
The sequence of time-varying delays $\{\tau_n\}_{n=0}^\infty \subset \mathbb{N}_0$ is bounded, that is, there exists a nonnegative integer  $\tau$ such that
  $$
0 \leq \tau_n \leq \tau\,\,\text{ for all}\,\, n\in \mathbb{N}_0.
$$
 \end{assumption}
  
\vskip 2mm
\begin{algorithm}[H]
	\SetAlgoLined
	\vskip2mm
	\textbf{Initialization}: Given a stepsize $\{\alpha_n\}_{n=0}^\infty \subset (0,\infty)$, and
     initial points $\xv_0,\xv_{-1},\cdots,\xv_{-\tau}\in \mathbb{R}^d.$
	
	\textbf{Iterative Step}: For a current point $\xv_n\in \mathbb{R}^d$, we compute
    $$
\xv_{n+1}:= T\xv_n-\alpha_n {\widetilde{\nabla}} f(T\xv_{n-\tau_n}),
$$
where $\widetilde{\nabla}f(T\xv_{n-\tau_n})$ is a stale subgradient of $f$ at $T\xv_{n-\tau_n}$.
 
	{\bf Update} $n:=n+1$.
	\caption{\bf Fixed-Point Delayed Subgradient Method (FDSM)}
	\label{our AI}
\end{algorithm}
\vskip2mm

Throughout this paper, we denote the set of all solutions of the problem (\ref{main-pb}) by $S$ and assume that it is a nonempty set.   Moreover, for simplicity of notation in our analysis, we will take $\xv_0=\xv_{-1}=\cdots=\xv_{-\tau}$.
\vskip2mm

\begin{remark}\label{R:A:main}
\begin{enumerate}
    \item[\rm(1)] The convexity of the function $f$ ensures that the subgradient of $f$ at at $T\xv_{n-\tau_n}$ exists for all $n\in \mathbb{N}_0$.
   \item[\rm(2)] In the case where $\tau_n=0$ for all $n\in \mathbb{N}_0$, Algorithm \ref{our AI} relates to the preceding method (\ref{Iduka2016}) (with the number of users $I=1$) proposed by Iiduka \cite{I-16}. 
 \item[\rm(3)] The assumption on the existence of the bound $\tau$ of delays as in Assumption \ref{assumption_tau} is widely used to analyze the convergence results of delayed type methods.
    Not only the fixed delay $\tau_n=\bar{\tau}$ for some $\bar{\tau}>0$, another form of delay is the cyclically delay $\tau_n = n \mod{(\tau+1)}$ \cite{ANN-23,NPN-23,GOP-17}. This sequence creates a repeating pattern of calling the stale subgradient to update the new iterates without recalculating it in every iteration. For the cyclically delay, it is noted that the subgradient will be calculated in every $\tau+1$ iterations.
\end{enumerate}
\end{remark}

We first derive a key inequality, which is an important tool for proving the convergence results.

\begin{lemma}\label{mainlemma}
Let  $\{\xv_n\}_{n=0}^\infty$ be a sequence generated by Algorithm \ref{our AI} and $a\in (0,1)$.  
Suppose that Assumption \ref{assumption_tau} holds.
Then, for all $n\in\mathbb{N}_0$ and  $ \xv^* \in \fix T,$ we have
\begin{align*}
2\alpha_n (f(\xv_{n+1})-f(\xv^*)) 
    \leq & \Vert \xv_n-\xv^*\Vert^2 -\Vert \xv_{n+1}-\xv^*\Vert^2
+2\alpha_n^2\|\widetilde{\nabla}f(\xv_{n+1})\|\|\widetilde{\nabla}f(T\xv_{n-\tau_n})\| \nonumber \\
        &+8\alpha_n^{2-a}\|\widetilde{\nabla}f(T\xv_{n})\|^2 + 8\alpha_n^{2-a}\|\widetilde{\nabla}f(T\xv_{n+1})\|^2 
  +24\alpha_n^{2-a}\|\widetilde{\nabla} f(T\xv_{n-\tau_n})\|^2 \\
    & 
    +\Big( \frac{3\alpha_n^a}{8}-1\Big)\Vert \xv_{n+1}-\xv_n \Vert^2   +\frac{2\alpha_n^a(\tau+1)}{8}\sum_{i=0}^\tau \Vert \xv_{n-i+1}-\xv_{n-i}\Vert^2.
\end{align*}
\end{lemma}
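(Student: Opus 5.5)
The plan is to compare $\xv_{n+1}$ with $\xv^*$ through the three-point identity, using the firm nonexpansiveness of $T$ (via the cutter property in Fact \ref{fne-fact}(2)) and then two subgradient inequalities. Write $\gv_n:=\widetilde{\nabla} f(T\xv_{n-\tau_n})$, so that $\xv_{n+1}=T\xv_n-\alpha_n\gv_n$. First I use
\begin{align*}
\Vert \xv_n-\xv^*\Vert^2-\Vert \xv_{n+1}-\xv^*\Vert^2=\Vert \xv_{n+1}-\xv_n\Vert^2+2\langle \xv_n-\xv_{n+1},\xv_{n+1}-\xv^*\rangle .
\end{align*}
Then I substitute $\xv_n-\xv_{n+1}=(\xv_n-T\xv_n)+\alpha_n\gv_n$. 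The piece $2\langle \xv_n-T\xv_n,\xv_{n+1}-\xv^*\rangle$ is split as $2\langle \xv_n-T\xv_n,T\xv_n-\xv^*\rangle+2\langle \xv_n-T\xv_n,\xv_{n+1}-T\xv_n\rangle$. The first term is $\ge 0$ by the cutter property. The second equals $-2\alpha_n\langle \xv_n-T\xv_n,\gv_n\rangle$, and I bound it by Young's inequality with weight $\alpha_n^{a}$. It is cleaner to write $\xv_n-T\xv_n=(\xv_n-\xv_{n+1})-\alpha_n\gv_n$, which produces a multiple of $\alpha_n^{a}\Vert\xv_{n+1}-\xv_n\Vert^2$ plus $\alpha_n^{2-a}\Vert\gv_n\Vert^2$-type terms. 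This step is where the coefficient $\big(\tfrac{3\alpha_n^a}{8}-1\big)$ in front of $\Vert \xv_{n+1}-\xv_n\Vert^2$ should come from, after moving everything to the correct side. What remains is $2\alpha_n\langle\gv_n,\xv_{n+1}-\xv^*\rangle$.

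Second, I bound $2\alpha_n(f(\xv_{n+1})-f(\xv^*))$ from above by $2\alpha_n\langle \gv_n,\xv_{n+1}-\xv^*\rangle$ plus error terms. Applying the subgradient inequality at $T\xv_{n-\tau_n}$ with $\gv_n$ and at $\xv_{n+1}$ with $\widetilde\nabla f(\xv_{n+1})$ gives
\begin{align*}
f(\xv_{n+1})-f(\xv^*)\le \langle \gv_n,\xv_{n+1}-\xv^*\rangle+\langle \widetilde\nabla f(\xv_{n+1})-\gv_n,\ \xv_{n+1}-T\xv_{n-\tau_n}\rangle .
\end{align*}
I then decompose $\xv_{n+1}-T\xv_{n-\tau_n}=(\xv_{n+1}-T\xv_n)+(T\xv_n-T\xv_{n-\tau_n})$. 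The first part equals $-\alpha_n\gv_n$. Paired with $\widetilde\nabla f(\xv_{n+1})$ it gives, via Cauchy--Schwarz, the term $2\alpha_n^2\Vert\widetilde\nabla f(\xv_{n+1})\Vert\Vert\gv_n\Vert$. Paired with $-\gv_n$ it gives a nonpositive or absorbable $\alpha_n^2\Vert\gv_n\Vert^2$ term.

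For the second part I use nonexpansiveness (Fact \ref{fne-fact}(1)) and telescoping:
\begin{align*}
\Vert T\xv_n-T\xv_{n-\tau_n}\Vert\le\sum_{i=0}^{\tau}\Vert \xv_{n-i}-\xv_{n-i-1}\Vert .
\end{align*}
Here the convention $\xv_0=\xv_{-1}=\cdots=\xv_{-\tau}$ handles small $n$ (with an index shift to match $\xv_{n-i+1}-\xv_{n-i}$). I then apply Young's inequality $2\alpha_n\langle u,v\rangle\le c\,\alpha_n^{2-a}\Vert u\Vert^2+\tfrac{1}{c}\alpha_n^{a}\Vert v\Vert^2$ together with $\Vert\sum_{i=0}^\tau d_i\Vert^2\le(\tau+1)\sum_{i=0}^\tau\Vert d_i\Vert^2$. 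This yields the term $\tfrac{2\alpha_n^a(\tau+1)}{8}\sum_{i=0}^\tau\Vert \xv_{n-i+1}-\xv_{n-i}\Vert^2$.

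The $\alpha_n^{2-a}$ gradient terms at $T\xv_n$ and $T\xv_{n+1}$ should arise when I replace $\widetilde\nabla f(\xv_{n+1})$ in the inner product with the difference of $T$-iterates by subgradients at $T\xv_{n}$ or $T\xv_{n+1}$. I would do this via the convexity inequality at those points, e.g.\ bounding $f(\xv_{n+1})-f(T\xv_{n+1})$ and $f(T\xv_n)-f(T\xv_{n-\tau_n})$ separately, and using $\Vert \xv_{n+1}-T\xv_{n+1}\Vert\le\Vert\xv_{n+1}-T\xv_n\Vert+\Vert \xv_{n+1}-\xv_n\Vert$. Each application of Young with constant $c=4$ or $c=12$ then gives the coefficients $8$, $8$ and $24$.

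The main obstacle is the bookkeeping in these Young splittings. The total weight on $\Vert\xv_{n+1}-\xv_n\Vert^2$ must come out exactly as $\tfrac{3\alpha_n^a}{8}$, and the weight on the delay sum exactly as $\tfrac{2(\tau+1)\alpha_n^a}{8}$. I would first carry all constants symbolically and only fix them at the end to match the stated inequality. Since the lemma is only an upper bound, any slack can simply be discarded.
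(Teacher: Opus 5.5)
Your first step is sound. The three-point identity, the cutter inequality and the Young bound on $2\alpha_n\langle \gv_n,\xv_n-\xv_{n+1}\rangle$ are what the paper uses, only organized around $\xv_{n+1}-\xv^*$ rather than $T\xv_n-\xv^*$.

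The genuine gap is in how you chain the function values. Your two-subgradient inequality compares $f(\xv_{n+1})$ directly with $f(T\xv_{n-\tau_n})$. So $\widetilde{\nabla} f(\xv_{n+1})$ is paired with the whole displacement $\xv_{n+1}-T\xv_{n-\tau_n}$, which leaves the term $2\alpha_n\langle \widetilde{\nabla} f(\xv_{n+1}),T\xv_n-T\xv_{n-\tau_n}\rangle$. Cauchy--Schwarz and Young then produce $\alpha_n^{2-a}\|\widetilde{\nabla} f(\xv_{n+1})\|^2$, and this term does not occur in the lemma. The only place $\widetilde{\nabla} f(\xv_{n+1})$ may enter is the product $2\alpha_n^2\|\widetilde{\nabla} f(\xv_{n+1})\|\|\gv_n\|$, and no choice of constants changes this. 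Your suggested repair through $f(\xv_{n+1})-f(T\xv_{n+1})$ and $\|\xv_{n+1}-T\xv_{n+1}\|\le\|\xv_{n+1}-T\xv_n\|+\|\xv_{n+1}-\xv_n\|$ has the same defect. It leaves $2\alpha_n\|\widetilde{\nabla} f(\xv_{n+1})\|\|\xv_{n+1}-\xv_n\|$, because an upper bound on $f(\xv_{n+1})-f(\cdot)$ must use a subgradient at $\xv_{n+1}$.

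The missing idea is to break the chain at $T\xv_n$, where $\xv_{n+1}-T\xv_n=-\alpha_n\gv_n$ exactly. The paper writes
\[
f(\xv_{n+1})-f(T\xv_{n-\tau_n})=\bigl[f(\xv_{n+1})-f(T\xv_n)\bigr]+\bigl[f(T\xv_n)-f(T\xv_{n+1})\bigr]+\bigl[f(T\xv_{n+1})-f(T\xv_{n-\tau_n})\bigr]
\]
and bounds each bracket with a subgradient at its first point.
\begin{itemize}
\item The first bracket gives the $2\alpha_n^2$ product term.
\item The second gives $8\alpha_n^{2-a}\|\widetilde{\nabla} f(T\xv_n)\|^2+\tfrac{\alpha_n^a}{8}\|\xv_{n+1}-\xv_n\|^2$.
\item The third gives $8\alpha_n^{2-a}\|\widetilde{\nabla} f(T\xv_{n+1})\|^2+\tfrac{\alpha_n^a}{8}\|\xv_{n+1}-\xv_{n-\tau_n}\|^2$.
\end{itemize}
Add the bound $2\alpha_n\langle\gv_n,T\xv_{n-\tau_n}-T\xv_n\rangle\le16\alpha_n^{2-a}\|\gv_n\|^2+\tfrac{\alpha_n^a}{8}\bigl(\|\xv_{n+1}-\xv_n\|^2+\|\xv_{n+1}-\xv_{n-\tau_n}\|^2\bigr)$ and your first-step term. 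Using Young with constant $8$ every time (not $4$ or $12$), and then $\|\xv_{n+1}-\xv_{n-\tau_n}\|^2\le(\tau+1)\sum_{i=0}^{\tau}\|\xv_{n-i+1}-\xv_{n-i}\|^2$, gives exactly the stated $\tfrac38$, $\tfrac{2(\tau+1)}{8}$ and $8,8,24$. Going straight from $T\xv_n$ to $T\xv_{n-\tau_n}$ with $\widetilde{\nabla} f(T\xv_n)$ also works, and gives a termwise smaller bound that implies the lemma.

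There are two smaller slips.
\begin{itemize}
\item Pairing $-\gv_n$ with $-\alpha_n\gv_n$ gives $+2\alpha_n^2\|\gv_n\|^2$, which is nonnegative, not nonpositive. Since $\alpha_n\le 1$ is not assumed, it cannot be absorbed into $24\alpha_n^{2-a}\|\gv_n\|^2$. It does cancel exactly against the $+2\alpha_n^2\|\gv_n\|^2$ that your first step produces from $-2\alpha_n\langle(\xv_n-\xv_{n+1})-\alpha_n\gv_n,\gv_n\rangle$, so that term must be kept. Equivalently, work with $\langle\gv_n,T\xv_n-\xv^*\rangle$, as the paper does.
\item Your telescoping sum $\sum_{i=0}^{\tau}\|\xv_{n-i}-\xv_{n-i-1}\|$ reaches $\xv_{-\tau-1}$, which is undefined. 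Use $\|\xv_n-\xv_{n-\tau_n}\|\le\sum_{i=1}^{\tau_n}\|\xv_{n-i+1}-\xv_{n-i}\|$ instead.
\end{itemize}
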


\begin{proof}
    Let $n\in\mathbb{N}_0$  and $\xv^*\in \fix T$. 
    Since $T$ is firmly nonexpansive, it follows from Fact \ref{fne-fact} (2) that
    \begin{align}\label{L1}
        \langle \xv_n-T\xv_n,\xv^*-T\xv_n \rangle \leq 0,
    \end{align}
   which  together with  $T\xv_n=\xv_{n+1}+\alpha_n\widetilde{\nabla}f(T\xv_{n-\tau_n})$
   yields
    \begin{align*}
        2\langle \xv_n-\xv_{n+1},\xv^*-T\xv_n \rangle 
        \leq 2\alpha_n\langle \widetilde{\nabla}f(T\xv_{n-\tau_n}),\xv^*-T\xv_n \rangle \nonumber.
    \end{align*}
    Again, by using the definition of $\xv_{n+1}$ in the left-hand side of the above relation, we get 
    \begin{align}\label{L2}
        2\langle \xv_n-\xv_{n+1}, \xv^*-\xv_{n+1} \rangle \leq 2\alpha_n\langle \widetilde{\nabla}f(T\xv_{n-\tau_n}),\xv^*-T\xv_n\rangle   + 2\alpha_n\langle \widetilde{\nabla}f(T\xv_{n-\tau_n}),\xv_n-\xv_{n+1}\rangle.
    \end{align}

    Next, we will investigate the upper bound of the terms in the right-hand side of \eqref{L2}.
    From the well-known Young's inequality, note that
    \begin{equation}\label{L3}
    \big\langle 8\alpha^{\frac{2-a}{2}}\widetilde{\nabla}f(T\xv_{n-\tau_n}),\alpha_n^\frac{a}{2}(\xv_n-\xv_{n+1}) \big\rangle 
    \leq \frac{64\alpha_n^{2-a}}{2}\|\widetilde{\nabla}f(T\xv_{n-\tau_n})\|^2+\frac{\alpha_n^a}{2} \Vert \xv_n-\xv_{n+1}\Vert^2,
    \end{equation}
    which is 
    \begin{equation}\label{L4}
        2\alpha_n\langle \widetilde{\nabla}f(T\xv_{n-\tau_n}),\xv_n-\xv_{n+1}\rangle \leq 8\alpha_n^{2-a}\|\widetilde{\nabla}f(T\xv_{n-\tau_n})\|^2 + \frac{\alpha_n^a}{8} \Vert \xv_n-\xv_{n+1}\Vert^2.
    \end{equation}

Next, we consider the second term of the right-hand side of \eqref{L2} that
{\small \begin{align}\label{L5.0}
    2\alpha_n \langle \widetilde{\nabla}f(T\xv_{n-\tau_n}),\xv^*-T\xv_n \rangle
    =\ 2\alpha_n \langle \widetilde{\nabla}f(T\xv_{n-\tau_n}),\xv^*-T\xv_{n-\tau_n}\rangle 
     +2\alpha_n\langle \widetilde{\nabla}f(T\xv_{n-\tau_n}),T\xv_{n-\tau_n}-T\xv_n \rangle.
\end{align}
}
Applying the definition of subgradients of $f$ at $T\xv_{n-\tau_n},\, \xv_{n+1},\, T\xv_n$ and $T\xv_{n+1}$, respectively, the relation \eqref{L5.0} becomes
{\small
\begin{align}\label{L5}
2\alpha_n \langle \widetilde{\nabla}f(T\xv_{n-\tau_n}),\xv^*-T\xv_n \rangle
    \leq & 2\alpha_n(f(\xv^*)-f(\xv_{n+1}))  +2\alpha_n \langle \widetilde{\nabla}f(\xv_{n+1}),\xv_{n+1}-T\xv_{n}\rangle \nonumber\\
    & +2\alpha_n \langle \widetilde{\nabla}f(T\xv_{n}),T\xv_{n}-T\xv_{n+1}\rangle  + 2\alpha_n \langle \widetilde{\nabla}f(T\xv_{n+1}),T\xv_{n+1}-T\xv_{n-\tau_n} \rangle \nonumber\\
    & +2\alpha_n\langle \widetilde{\nabla}f(T\xv_{n-\tau_n}),T\xv_{n-\tau_n}-T\xv_n \rangle.
\end{align}
The Cauchy-Schwarz inequality together with $\xv_{n+1}-T\xv_n = -\alpha_n\widetilde{\nabla}f(T\xv_{n-\tau_n})$ implies
\begin{align}\label{L8-1}
    2\alpha_n\langle \widetilde{\nabla}f(\xv_{n+1}), \xv_{n+1}-T\xv_n\rangle \leq 2\alpha_n^2\|\widetilde{\nabla}f(\xv_{n+1})\|\|\widetilde{\nabla}f(T\xv_{n-\tau_n})\|.
\end{align}
}
Again, using Young's inequality as the same technique in \eqref{L3} and invoking the nonexpansiveness of $T$ in Fact \ref{fne-fact} (1), we obtain the following two inequalities:
\begin{align}\label{L6}
    2\alpha_n \langle \widetilde{\nabla}f(T\xv_n),T\xv_n-T\xv_{n+1} \rangle
    \leq 8\alpha_n^{2-a}\|\widetilde{\nabla}f(T\xv_n)\|^2 + \frac{\alpha_n^a}{8} \Vert \xv_n-\xv_{n+1}\Vert^2
\end{align}
and
{\small \begin{align}\label{L7}
    2\alpha_n \langle \widetilde{\nabla}f(T\xv_{n+1}),T\xv_{n+1}-T\xv_{n-\tau_n}\rangle
    \leq  8\alpha_n^{2-a}\|\widetilde{\nabla}f(T\xv_{n+1})\|^2 + \frac{\alpha_n^a}{8} \Vert \xv_{n+1}-\xv_{n-\tau_n}\Vert^2.
\end{align} }
By applying the same technique as above and the fact that $(u+v)^2\leq 2u^2 +2v^2$ for all $u,v\geq0$, we have
{\small \begin{align}\label{L8}
2\alpha_n \langle \widetilde{\nabla} f(T\xv_{n-\tau_n}), T\xv_{n-\tau_n}-T\xv_n\rangle 
    \leq 16\alpha_n^{2-a}\|\widetilde{\nabla} f(T\xv_{n-\tau_n})\|^2 + \frac{\alpha_n^a}{8} \Vert \xv_n-\xv_{n+1}\Vert^2 
    + \frac{\alpha_n^a}{8} \Vert \xv_{n+1}-\xv_{n-\tau_n}\Vert^2.
\end{align} }
Substituting the inequalities \eqref{L8-1}, \eqref{L6}, \eqref{L7}, and \eqref{L8} into \eqref{L5}, we obtain 
{\small \begin{align}\label{L9}
2\alpha_n \langle \widetilde{\nabla} f(T\xv_{n-\tau_n}),\xv^*-T\xv_n \rangle 
    \leq&  2\alpha_n(f(\xv^*)-f(\xv_{n+1})) + 2\alpha_n^2\|\widetilde{\nabla}f(\xv_{n+1})\|\|\widetilde{\nabla}f(T\xv_{n-\tau_n})\|\nonumber\\
        &+ 8\alpha_n^{2-a}\|\widetilde{\nabla}f(T\xv_{n})\|^2 
        + 8\alpha_n^{2-a}\|\widetilde{\nabla}f(T\xv_{n+1})\|^2 
  + 16\alpha_n^{2-a}\|\widetilde{\nabla}f(T\xv_{n-\tau_n})\|^2 \nonumber\\
    &+ \frac{2\alpha_n^a}{8} \Vert \xv_{n+1}-\xv_n\Vert^2   + \frac{2\alpha_n^a}{8} \Vert \xv_{n+1}-\xv_{n-\tau_n} \Vert^2.
\end{align} }

Now, we remain to produce the upper bound of the last term of \eqref{L9} as
\begin{align}\label{L10-1}
\Vert \xv_{n+1}-\xv_{n-\tau_n}\Vert^2 
    =& (\tau_n+1)^2 \Big\Vert \sum_{i=0}^{\tau_n} \frac{\xv_{n-i+1}-\xv_{n-i}}{\tau_n+1} \Big\Vert^2 \nonumber \\
   \leq& (\tau_n+1)\sum_{i=0}^{\tau_n} \Vert \xv_{n-i+1}-\xv_{n-i}\Vert^2 
   \leq (\tau+1)\sum_{i=0}^{\tau} \Vert \xv_{n-i+1}-\xv_{n-i}\Vert^2,
\end{align}
which, combined with the above relation, implies that
{\small
\begin{align}\label{L10}
  2\alpha_n \langle \widetilde{\nabla} f(T\xv_{n-\tau_n}),\xv^*-T\xv_n \rangle  
 \leq&  2\alpha_n(f(\xv^*)-f(\xv_{n+1})) + 2\alpha_n^2\|\widetilde{\nabla}f(\xv_{n+1})\|\|\widetilde{\nabla}f(T\xv_{n-\tau_n})\|\nonumber\\
        &+ 8\alpha_n^{2-a}\|\widetilde{\nabla}f(T\xv_{n})\|^2 
        + 8\alpha_n^{2-a}\|\widetilde{\nabla}f(T\xv_{n+1})\|^2
        + 16\alpha_n^{2-a}\|\widetilde{\nabla}f(T\xv_{n-\tau_n})\|^2
  \nonumber \\
    &+ \frac{2\alpha_n^a}{8} \Vert \xv_{n+1}-\xv_n\Vert^2 +\frac{2\alpha_n^a(\tau+1)}{8}\sum_{i=0}^{\tau} \Vert \xv_{n-i+1}-\xv_{n-i}\Vert^2. 
\end{align}   
}

Finally, let us note that
\begin{align}\label{L11}
    2\langle \xv_n-\xv_{n+1},\xv^*-\xv_{n+1}\rangle = \Vert \xv_n-\xv_{n+1}\Vert^2 + \Vert \xv_{n+1}-\xv^* \Vert^2 - \Vert \xv_n-\xv^* \Vert^2,
\end{align}
    which, together with \eqref{L2}, \eqref{L4} and  \eqref{L10}, implies that 
\begin{align*}
2\alpha_n (f(\xv_{n+1})-f(\xv^*)) 
    \leq& \Vert \xv_n-\xv^*\Vert^2 -\Vert \xv_{n+1}-\xv^*\Vert^2
+2\alpha_n^2\|\widetilde{\nabla}f(\xv_{n+1})\|\|\widetilde{\nabla}f(T\xv_{n-\tau_n})\| \nonumber \\
        &+8\alpha_n^{2-a}\|\widetilde{\nabla}f(T\xv_{n})\|^2 + 8\alpha_n^{2-a}\|\widetilde{\nabla}f(T\xv_{n+1})\|^2 
  +24\alpha_n^{2-a}\|\widetilde{\nabla} f(T\xv_{n-\tau_n})\|^2 \\
    &
    +\Big( \frac{3\alpha_n^a}{8}-1\Big)\Vert \xv_{n+1}-\xv_n \Vert^2   +\frac{2\alpha_n^a(\tau+1)}{8}\sum_{i=0}^\tau \Vert \xv_{n-i+1}-\xv_{n-i}\Vert^2,
\end{align*}
as desired. This completes the proof. 
\end{proof}

\subsection{Convergence for the Best Achieved Function Values}

In order to establish the rate of convergence of the generated sequence of function values, we need the following assumption:
\begin{assumption}\label{A:sg bbd}
   There is  $C > 0$ such that
    $\max\{\Vert \widetilde{\nabla}f(\xv_n)\Vert,\,\Vert \widetilde{\nabla}f(T\xv_n)\Vert\} \leq C$
    for all $n\in \mathbb{N}_0$. 
\end{assumption}

Assumption \ref{A:sg bbd} plays a crucial role in proving the convergence results for nonsmooth convex optimization over both simple constrained set \cite{B-15,NB-01,M-08,RNV-09} and, in the current setting, the fixed-point constraint of nonlinear operators \cite{HI-19,HI-19-Incremental,I-16}, to name but a few. 
Moreover, it is worth noting that a simple situation in which Assumption \ref{A:sg bbd} is satisfied is when the function $f$ is polyhedral, that is,  $f(\xv):=\max_{1\leq i \leq m}\{\langle \cv_i,\xv\rangle + b_i\}$ for all $\xv\in\R^d$, where $\cv_i\in\mathbb{R}^d$ and $b_i\in\mathbb{R}$ for all $i=1,\ldots,m$. In this case, the subdifferential set of $f$ at $\xv$ is the convex hull of the set $\{\cv_1,\ldots,\cv_m\}$ and the constant $C$ in Assumption \ref{A:sg bbd} is set by $C:=\max_{1\leq i \leq m}\|\cv_i\|$. The reader may consult, for instance, the book of Bertsekas \cite[Assumption 3.2.1]{B-15}. 
\vskip 2mm

Now, we are in a position to prove one of the main convergence results as the following theorem. Actually, the purpose of this theorem is to show that the sequence of best achieved function values $\left\{\min_{0\leq n \leq N}f(\xv_{n+1})\right\}_{N=0}^\infty$ can approximate the optimal value $f^*$.

\begin{theorem}\label{thm:rate}
Let  $\{\xv_n\}_{n=0}^\infty$ be a sequence generated by Algorithm \ref{our AI} and $a\in (0,1)$.  
Suppose that Assumptions \ref{assumption_tau} and \ref{A:sg bbd} hold and  the sequence $\{\alpha_n\}_{n=0}^\infty\subset (0,\infty)$ is nonincreasing such that  $(3+2(\tau+1)^2)\alpha_0^a < 8$. 
Then, for all $N\in\mathbb{N}_0$ and  $ \xv^* \in \mathcal{S}$, we have
\begin{equation*}
    \min_{0\leq n \leq N}f(\xv_{n+1})-f^* \leq \frac{\Vert \xv_0-\xv^*\Vert^2 + 2C^2\sum_{n=0}^N\alpha_n^2+ 40C^2\sum_{n=0}^N \alpha_n^{2-a}}{2\sum_{n=0}^N \alpha_n}.
\end{equation*}   
\end{theorem}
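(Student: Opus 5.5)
The plan is to sum the key inequality of Lemma \ref{mainlemma} over $n=0,\dots,N$ with $\xv^*\in\mathcal{S}$, so that $f(\xv^*)=f^*$. The subgradient terms are handled by Assumption \ref{A:sg bbd}. The distance terms telescope. The difference terms $\Vert \xv_{k+1}-\xv_k\Vert^2$ cancel against the negative coefficient $\frac{3\alpha_n^a}{8}-1$, using the stepsize condition. Finally, the left-hand side is bounded below using the best achieved value.

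\textbf{Step 1: bound the subgradient terms.} By Assumption \ref{A:sg bbd}, the product term satisfies
\[
2\alpha_n^2\|\widetilde{\nabla}f(\xv_{n+1})\|\|\widetilde{\nabla}f(T\xv_{n-\tau_n})\|\le 2C^2\alpha_n^2 .
\]
Here the index $n-\tau_n$ may be negative, but then $\xv_{n-\tau_n}=\xv_0$, so the bound still applies. The three squared terms give at most $(8+8+24)C^2\alpha_n^{2-a}=40C^2\alpha_n^{2-a}$.

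\textbf{Step 2: sum over $n$.} Summing the inequality of Lemma \ref{mainlemma} from $n=0$ to $N$, the terms $\Vert\xv_n-\xv^*\Vert^2-\Vert\xv_{n+1}-\xv^*\Vert^2$ telescope to at most $\Vert\xv_0-\xv^*\Vert^2$.

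\textbf{Step 3: control the difference terms (main obstacle).} We must show that
\[
\sum_{n=0}^N\Big[\Big(\tfrac{3\alpha_n^a}{8}-1\Big)\Vert\xv_{n+1}-\xv_n\Vert^2+\tfrac{2\alpha_n^a(\tau+1)}{8}\sum_{i=0}^{\tau}\Vert\xv_{n-i+1}-\xv_{n-i}\Vert^2\Big]\le 0.
\]
Write $D_k:=\Vert\xv_{k+1}-\xv_k\Vert^2$. Since $\xv_0=\xv_{-1}=\cdots=\xv_{-\tau}$, we have $D_k=0$ for $k<0$. Reindex the double sum by $k=n-i$. For fixed $k\in\{0,\dots,N\}$, the term $D_k$ appears for $n=k,\dots,\min(k+\tau,N)$, that is, at most $\tau+1$ times. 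Each occurrence has coefficient $\frac{2(\tau+1)\alpha_n^a}{8}\le\frac{2(\tau+1)\alpha_k^a}{8}$, because $n\ge k$ and $\{\alpha_n\}$ is nonincreasing. So the total coefficient of $D_k$ is at most
\[
\frac{3\alpha_k^a}{8}-1+\frac{2(\tau+1)^2\alpha_k^a}{8}=\frac{(3+2(\tau+1)^2)\alpha_k^a-8}{8}.
\]
Since $\alpha_k\le\alpha_0$, this is negative by the hypothesis $(3+2(\tau+1)^2)\alpha_0^a<8$. Hence all these terms can be dropped. This bookkeeping is the step needing care: it is where monotonicity of $\{\alpha_n\}$ and the initialization convention are used.

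\textbf{Step 4: conclude.} The left-hand side satisfies
\[
2\sum_{n=0}^N\alpha_n\big(f(\xv_{n+1})-f^*\big)\ \ge\ 2\Big(\sum_{n=0}^N\alpha_n\Big)\Big(\min_{0\le n\le N}f(\xv_{n+1})-f^*\Big).
\]
This uses $\alpha_n>0$; note $f(\xv_{n+1})-f^*$ may be negative because $\xv_{n+1}$ need not lie in $\fix T$, but the inequality holds regardless since each term is at least the minimum. Combining Steps 1–4 and dividing by $2\sum_{n=0}^N\alpha_n$ gives the claimed bound.
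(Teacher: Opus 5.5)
Your proposal is correct and follows essentially the same route as the paper. You bound the subgradient terms of Lemma~\ref{mainlemma} via Assumption~\ref{A:sg bbd}, telescope the distance terms, absorb the delayed double sum into the negative coefficient $\frac{(3+2(\tau+1)^2)\alpha_n^a}{8}-1$ using $\xv_0=\xv_{-1}=\cdots=\xv_{-\tau}$ and the monotonicity of $\{\alpha_n\}$, and then lower-bound the weighted sum by the best achieved value. Your per-index coefficient count for $D_k$ is exactly the paper's interchange-of-summation step \eqref{TT3}, written out term by term.
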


\begin{proof}
Let $\xv^*\in \mathcal{S}$ be given. By using the inequality obtained in Lemma \ref{mainlemma} together with Assumption \ref{A:sg bbd}, it follows that, for all $n\in\N_0$,
\begin{align*}
2\alpha_n (f(\xv_{n+1})-f^*)  
    \leq& \Vert \xv_n-\xv^*\Vert^2 -\Vert \xv_{n+1}-\xv^*\Vert^2
+2\alpha_n^2\|\widetilde{\nabla}f(\xv_{n+1})\|\|\widetilde{\nabla}f(T\xv_{n-\tau_n})\| \nonumber \\
        &+8\alpha_n^{2-a}\|\widetilde{\nabla}f(T\xv_{n})\|^2 + 8\alpha_n^{2-a}\|\widetilde{\nabla}f(T\xv_{n+1})\|^2 
  +24\alpha_n^{2-a}\|\widetilde{\nabla} f(T\xv_{n-\tau_n})\|^2 \\
    &
    +\Big( \frac{3\alpha_n^a}{8}-1\Big)\Vert \xv_{n+1}-\xv_n \Vert^2   +\frac{2\alpha_n^a(\tau+1)}{8}\sum_{i=0}^\tau \Vert \xv_{n-i+1}-\xv_{n-i}\Vert^2\\
    \leq& \Vert \xv_n-\xv^*\Vert^2 -\Vert \xv_{n+1}-\xv^*\Vert^2
    +2C^2\alpha_n^2+40\alpha_n^{2-a}C^2  \\
    &  +\Big( \frac{3\alpha_n^a}{8}-1\Big)\Vert \xv_{n+1}-\xv_n \Vert^2+\frac{2\alpha_n^a(\tau+1)}{8}\sum_{i=0}^\tau \Vert \xv_{n-i+1}-\xv_{n-i}\Vert^2.
\end{align*}

Now, for any nonnegative integer $N$, summing the above inequality from $n=0$ to $n=N$, telescoping the consecutive terms, and nelecting the nonposive term, we obtain 
{\small \begin{align}\label{TT2}
\sum_{n=0}^N 2\alpha_n (f(\xv_{n+1})-f^*)
    \leq& \Vert \xv_0-\xv^*\Vert^2  + 2C^2\sum_{n=0}^N\alpha_n^2  +40C^2\sum_{k=0}^N \alpha_n^{2-a} + \sum_{n=0}^N \Big( \frac{3\alpha_n^a}{8}-1\Big)\Vert \xv_{n+1}-\xv_n \Vert^2 \nonumber\\
    &  +  \frac{2(\tau+1)}{8}\sum_{n=0}^N \alpha_n^a \sum_{i=0}^\tau \Vert \xv_{n-i+1}-\xv_{n-i}\Vert^2. 
\end{align}
}
Now, by changing the order of the double summation and shifting the summation index in the last term of \eqref{TT2}, we have
\begin{align}\label{TT3}
\sum_{n=0}^N \alpha_n^a \sum_{i=0}^\tau \Vert \xv_{n-i+1}-\xv_{n-i}\Vert^2
    =& \sum_{i=0}^\tau \sum_{n=-i}^{N-i}\alpha_{n+i}^a \Vert \xv_{n+1}-\xv_n \Vert^2 \nonumber \\
    \leq& \sum_{i=0}^\tau \sum_{n=0}^{N}\alpha_{n}^a \Vert \xv_{n+1}-\xv_n \Vert^2 
    = (\tau+1)\sum_{n=0}^N \alpha_{n}^a \Vert \xv_{n+1}-\xv_n \Vert^2,
\end{align}
where the inequality holds true by the assumptions that $\xv_0=\xv_{-1}=\cdots=\xv_{-\tau}$ and $\{\alpha_n\}_{n=0}^\infty$ is nonincreasing. Thus, by combining the obtained relations \eqref{TT2} and \eqref{TT3}, we have 
\begin{align*}
2\sum_{n=0}^N \alpha_n(f(\xv_{n+1})-f^*)
    \leq& \Vert \xv_0-\xv^*\Vert^2 +2C^2\sum_{n=0}^N\alpha_n^2 +40C^2\sum_{n=0}^N \alpha_n^{2-a} \nonumber\\
    & +\sum_{n=0}^N \left( \frac{(3 + 2(\tau+1)^2)\alpha_n^a }{8} -1\right)\Vert \xv_{n+1}-\xv_n \Vert^2, 
    \end{align*}
which, together with the assumption that the stepsize $\alpha_n$ is nonincreasing and satisfies $(3 + 2(\tau+1)^2)\alpha_0^a <8 $, yields 
\begin{align*}
    2\sum_{n=0}^N \alpha_n(f(\xv_{n+1})-f^*) \leq \Vert \xv_0-\xv^*\Vert^2 +2C^2\sum_{n=0}^N\alpha_n^2 +40C^2\sum_{n=0}^N \alpha_n^{2-a}.
\end{align*}
Hence we have
\begin{equation*}
    \min_{0\leq n \leq N}f(\xv_{n+1})-f^* \leq \frac{\Vert \xv_0-\xv^*\Vert^2 +2C^2\sum_{n=0}^N\alpha_n^2+ 40C^2\sum_{n=0}^N \alpha_n^{2-a}}{2\sum_{n=0}^N \alpha_n}.
\end{equation*}
The proof is complete. 
\end{proof}

In the following corollary, by considering a specific choice of the step size $\{\alpha_n\}_{n=0}^\infty$, we provide the upper bound of the difference of the sequence $\left\{\min_{0\leq n \leq N}f(\xv_{n+1})\right\}_{N=0}^\infty$ of best achieved function values to the optimal value $f^*$.

\begin{corollary}
Let  $\{\xv_n\}_{n=0}^\infty$ be a sequence generated by Algorithm \ref{our AI} and $a\in (0,1)$.  
Suppose that Assumptions \ref{assumption_tau} and \ref{A:sg bbd} hold. If the stepsize sequence $\{\alpha_n\}_{n=0}^\infty$ is given by $\alpha_n=\frac{\alpha}{n+1}$, where $\alpha>0$, such that  $(3+2(\tau+1)^2)\alpha^a < 8$, then, for all $N\in\mathbb{N}_0$ and  $\xv^* \in \mathcal{S}$,  it follows that
    \begin{align*}
        \min_{0\leq n \leq N} f(\xv_{n+1})-f^* \leq \frac{\Vert \xv_0-\xv^*\Vert^2 +4C^2\alpha^2+ 40C^2\left({\frac{\alpha^{2-a}(2-a)}{1-a}}\right)}{2\alpha}\left(\frac{1}{\log(N+2)}\right).
    \end{align*}
\end{corollary}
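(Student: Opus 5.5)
The plan is to specialize Theorem \ref{thm:rate} to $\alpha_n=\frac{\alpha}{n+1}$ and then estimate the three series that appear.

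\textbf{Checking the hypotheses.} The sequence $\alpha_n=\alpha/(n+1)$ is positive and nonincreasing. Moreover $\alpha_0=\alpha$, so the condition $(3+2(\tau+1)^2)\alpha^a<8$ is exactly the step-size requirement of Theorem \ref{thm:rate}. Hence, for every $N\in\N_0$ and $\xv^*\in\mathcal{S}$,
\begin{equation*}
\min_{0\leq n\leq N}f(\xv_{n+1})-f^*\leq \frac{\Vert\xv_0-\xv^*\Vert^2+2C^2\sum_{n=0}^N\alpha_n^2+40C^2\sum_{n=0}^N\alpha_n^{2-a}}{2\sum_{n=0}^N\alpha_n}.
\end{equation*}

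\textbf{Bounding the numerator sums by constants.} Both sums are compared with integrals.

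For the square-summable part,
\begin{equation*}
\sum_{n=0}^N\alpha_n^2=\alpha^2\sum_{k=1}^{N+1}k^{-2}\leq\alpha^2\Big(1+\int_1^\infty t^{-2}\,dt\Big)=2\alpha^2.
\end{equation*}
This yields the term $4C^2\alpha^2$.

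For the second part, note $2-a>1$ since $a\in(0,1)$, so
\begin{equation*}
\sum_{n=0}^N\alpha_n^{2-a}=\alpha^{2-a}\sum_{k=1}^{N+1}k^{-(2-a)}\leq\alpha^{2-a}\Big(1+\int_1^\infty t^{-(2-a)}\,dt\Big)=\alpha^{2-a}\Big(1+\frac{1}{1-a}\Big)=\alpha^{2-a}\,\frac{2-a}{1-a}.
\end{equation*}
This is precisely the constant in the statement.

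\textbf{Bounding the denominator from below.} Since $1/k$ is decreasing,
\begin{equation*}
\sum_{n=0}^N\alpha_n=\alpha\sum_{k=1}^{N+1}\frac1k\geq\alpha\int_1^{N+2}\frac{dt}{t}=\alpha\log(N+2).
\end{equation*}
Therefore $2\sum_{n=0}^N\alpha_n\geq 2\alpha\log(N+2)>0$. Dividing the constant numerator bound by $2\alpha\log(N+2)$ gives exactly the claimed estimate.

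\textbf{Main point of care.} The only step needing attention is the integral comparison, in particular keeping the extra first term in each series to obtain the constants $2$ and $\frac{2-a}{1-a}$. Everything else is direct substitution into Theorem \ref{thm:rate}.
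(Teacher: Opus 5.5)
Your proposal is correct and follows essentially the same route as the paper: apply Theorem~\ref{thm:rate} with $\alpha_0=\alpha$, bound $\sum_{n=0}^N\alpha_n^2\le 2\alpha^2$ and $\sum_{n=0}^N\alpha_n^{2-a}\le \alpha^{2-a}\frac{2-a}{1-a}$ by keeping the first term and comparing the remaining terms with an integral, and bound $\sum_{n=0}^N\alpha_n\ge\alpha\log(N+2)$ from below. Your integral comparisons after the shift $k=n+1$ are the paper's estimates in different notation, and the final division is legitimate because the numerator bound is nonnegative and $\log(N+2)>0$.
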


\begin{proof}
By invoking \cite[Lemma 8.26]{B-17}, we  estimate the upper bound of the sum in the numerator and the lower bound of the sum in the denominator of the right-hand side term in Theorem \ref{thm:rate} as follows: 
Let  $N\in\mathbb{N}_0$ be fixed. Now, we note that 
    \begin{align*}
            \sum_{n=0}^N\alpha_n^{2-a} 
                = \alpha^{2-a} + \alpha^{2-a}\sum_{n=1}^N \frac{1}{(n+1)^{2-a}} 
                \leq \alpha^{2-a}+ \alpha^{2-a}\int_{0}^{N} \frac{1}{(x+1)^{2-a}}\, dx  
                \leq \frac{\alpha^{2-a}(2-a)}{1-a}
    \end{align*}
    and
    \begin{align*}
        \sum_{n=0}^N\alpha_n^{2} 
                = \alpha^{2} + \alpha^{2}\sum_{n=1}^N \frac{1}{(n+1)^{2}} 
                \leq \alpha^{2}+ \alpha^{2}\int_{0}^{N} \frac{1}{(x+1)^{2}}\, dx  
                \leq 2\alpha^2.
    \end{align*}
    
In addition, we have
    \begin{align*}
        \sum_{n=0}^\infty \alpha_n
            = \alpha \sum_{n=0}^N \frac{1}{n+1}
            \geq \alpha\int_{0}^{N+1}\frac{1}{x+1}\, dx
            = \alpha \log(N+2).
    \end{align*}
By using these three results, we obtain the required inequality. This completes the proof. 
\end{proof}

\subsection{Convergence of the Generated Sequence}

Note that the convergence results previously established show the convergence of the function values. In this subsection, we will show the convergences of the generated sequence to the optimal solution of the problem (\ref{main-pb}). To do so, we need a rather strong boundedness than Assumption \ref{A:sg bbd}.

\begin{assumption}\label{A: xn bbd}
    There exists $M>0$ such that $\Vert \xv_n \Vert \leq M$ for all $n \in \mathbb{N}_0.$
\end{assumption}
 
    Due to the nonmonotone behaviour of the generated sequence $\{\xv_n\}_{n=0}^\infty$, Assumption \ref{A: xn bbd} will be an essential property for ensuring the existence of a convergence subsequence of the generated sequence. Actually, Assumption \ref{A: xn bbd} has been assumed \cite{HI-18,I-15-Acceleration,I-16,I-16-incremental,I-16-P,SHI-20} when demonstrating the convergences of the generated sequence or a subsequence of the generated sequence to an optimal solution of the convex optimization problem with fixed-point constraints.
\vskip 2mm

A typical numerical strategy for ensuring  Assumption \ref{A: xn bbd} in the literature is to consider a well-chosen compact set $X$  such that $\fix T \subset X$. One simple choice for $X$ is a closed ball with a large enough radius such that $\fix T \subset X$. 
In this situation, the sequence $\xv_{n+1}$ in Algorithm \ref{our AI} should be rewritten as follows:
$$
\xv_{n+1} := P_X(T\xv_n-\alpha_n \widetilde{\nabla}f(T\xv_n)).
$$
   For the case when the initial point $\xv_0 \in X$, we immediately get that the sequence $\{\xv_n\}_{n=0}^\infty$ is included in the bounded set $X$ and hence $\{\xv_n\}_{n=0}^\infty$ is also bounded.
    If the initial point $\xv_0 \notin X$, we observe that $\{\xv_n\}_{n=1}^\infty \subset X$ is bounded, which is denoted its bound by $M_1$.
    In this case, we notice that $\{\xv_n\}_{n=0}^\infty$ is bounded by $\mathrm{max}\{\Vert \xv_0 \Vert, M_1\}$. 
    \vskip 2mm
    
To derive the convergence of the generated sequence, we use the following lemma:

\begin{lemma} {\rm \cite[Lemma 3.1]{M-08}\label{prop1}}
    Let $\{a_n\}_{n=0}^\infty$ be a sequence of nonnegative real numbers such that there exists a subsequence $\{a_{n_j}\}_{j=0}^\infty$ of $\{a_n\}_{n=0}^\infty$ with $a_{n_j} < a_{n_{j+1}}$ for all $j\in \N_0$.  For all $n\geq n_0$, define
   $\mu(n) = \mathrm{max}\{k\in \mathbb{N}_0 : n_0 \leq k \leq n, a_k<a_{k+1}\}$.
    Then the sequence $\{\mu (n) \}_{n=n_0}^\infty$ is nondecreasing such that $\displaystyle\lim_{n\to \infty} \mu(n) = \infty$. Moreover, it holds that $a_{\mu(n)} \leq a_{\mu(n)+1}$ and $a_n \leq a_{\mu(n)+1}$  for all $n\geq n_0$.
\end{lemma}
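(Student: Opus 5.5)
We need to prove Lemma \ref{prop1} (Maingé's lemma). Note that $n_0$ here should be read as the first index of the given subsequence, so that the set defining $\mu(n)$ is nonempty whenever $n\ge n_0$: it contains $k=n_0$ because $a_{n_0}<a_{n_1}$ — wait, that compares non-consecutive terms. So first I show the set is nonempty. Since $a_{n_0}<a_{n_1}$ with $n_1>n_0$, there must be some $k\in[n_0,n_1-1]$ with $a_k<a_{k+1}$; otherwise $a_{n_0}\ge a_{n_0+1}\ge\cdots\ge a_{n_1}$. Hence for $n\ge n_1$ the set is nonempty. For $n_0\le n<n_1$ I would handle it either by restricting to $n$ large enough, or by choosing $n_0$ in the definition so that $a_{n_0}<a_{n_0+1}$, which is the standard convention in Maingé's formulation. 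I will adopt the convention that $n_0$ is chosen with $a_{n_0}<a_{n_0+1}$; such an index exists by the above argument.

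\textbf{Monotonicity and divergence.} The set $\{k: n_0\le k\le n,\ a_k<a_{k+1}\}$ grows with $n$, so its maximum $\mu(n)$ is nondecreasing. For divergence, fix any $K$. Choose $j$ with $n_j\ge K$. By the descending-chain argument applied to $a_{n_j}<a_{n_{j+1}}$, there is a $k\in[n_j,n_{j+1}-1]$ with $a_k<a_{k+1}$. Then $\mu(n)\ge k\ge K$ for all $n\ge n_{j+1}$. Hence $\mu(n)\to\infty$.

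\textbf{The two inequalities.} The relation $a_{\mu(n)}\le a_{\mu(n)+1}$ holds by definition, indeed strictly. For $a_n\le a_{\mu(n)+1}$, split into two cases. If $\mu(n)=n$, then $a_n<a_{n+1}=a_{\mu(n)+1}$. Otherwise $\mu(n)<n$, and by maximality $a_k\ge a_{k+1}$ for every $k$ with $\mu(n)+1\le k\le n-1$. Chaining these gives $a_{\mu(n)+1}\ge a_{\mu(n)+2}\ge\cdots\ge a_n$.

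\textbf{Main obstacle.} The only delicate point is the well-definedness of $\mu(n)$ for small $n$, handled by the choice of $n_0$ above. Everything else is elementary.
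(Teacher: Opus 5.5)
Your argument is correct and is the standard proof from the cited source (the paper itself only cites Maingé and gives no proof). Nestedness of the index sets gives monotonicity, and the infinitely many ascents forced by the subsequence give $\mu(n)\to\infty$. Maximality of $\mu(n)$ gives the chain $a_{\mu(n)+1}\ge a_{\mu(n)+2}\ge\cdots\ge a_n$.

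Your concern about well-definedness is justified. Under the hypothesis as printed, $a_{n_j}<a_{n_{j+1}}$, the defining set can be empty for some $n_0\le n<n_1$. The intended hypothesis, however, is Maingé's $a_{n_j}<a_{n_j+1}$, which matches the paper's choice of $m_0$ as the first ascent in Case~2 of Theorem~\ref{thm:con}. Under it, $n_0$ is automatically an ascent, so your convention holds without re-choosing $n_0$.
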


\begin{lemma}\label{LL}
Let  $\{\xv_n\}_{n=0}^\infty$ be a sequence generated by Algorithm \ref{our AI}.  
Suppose that Assumptions \ref{assumption_tau} and \ref{A:sg bbd} hold. 
Then, for all $N\in\mathbb{N}_0$ and  $ \xv^* \in \fix T$, we have
    \begin{align*}
        \|\xv_{n+1}-\xv^*\|^2
        \leq&
        \|\xv_n-\xv^*\|^2 - \|\xv_{n+1}-\xv_n\|^2 +2\alpha_n C\|\xv_{n-\tau_n}-\xv_n\| \\
        &+2\alpha_nC\|\xv_{n+1}-\xv_n\| 
         + 2\alpha_n(f(\xv^*)-f(T\xv_{n-\tau_n})).
    \end{align*}
\end{lemma}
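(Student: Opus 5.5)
The plan is to repeat the opening of the proof of Lemma \ref{mainlemma}, but to stop early and estimate the subgradient terms directly with Cauchy--Schwarz and Assumption \ref{A:sg bbd}, rather than with Young's inequality.

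Fix $n\in\N_0$ and $\xv^*\in\fix T$. From the cutter property in Fact \ref{fne-fact} (2) and the identity $T\xv_n=\xv_{n+1}+\alpha_n\widetilde{\nabla}f(T\xv_{n-\tau_n})$, we obtain \eqref{L2} exactly as before:
\begin{align*}
2\langle \xv_n-\xv_{n+1},\xv^*-\xv_{n+1}\rangle \leq 2\alpha_n\langle \widetilde{\nabla}f(T\xv_{n-\tau_n}),\xv^*-T\xv_n\rangle + 2\alpha_n\langle \widetilde{\nabla}f(T\xv_{n-\tau_n}),\xv_n-\xv_{n+1}\rangle .
\end{align*}
By the polarization identity \eqref{L11}, the left-hand side equals $\|\xv_n-\xv_{n+1}\|^2+\|\xv_{n+1}-\xv^*\|^2-\|\xv_n-\xv^*\|^2$. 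This already produces the terms $\|\xv_n-\xv^*\|^2-\|\xv_{n+1}-\xv_n\|^2$ of the claimed inequality.

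We now treat the two inner products on the right separately.

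For the second inner product, apply Cauchy--Schwarz together with $\|\widetilde{\nabla}f(T\xv_{n-\tau_n})\|\leq C$ from Assumption \ref{A:sg bbd}. This gives the bound $2\alpha_nC\|\xv_{n+1}-\xv_n\|$.

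For the first inner product, split it as in \eqref{L5.0}:
\begin{align*}
\langle \widetilde{\nabla}f(T\xv_{n-\tau_n}),\xv^*-T\xv_{n-\tau_n}\rangle + \langle \widetilde{\nabla}f(T\xv_{n-\tau_n}),T\xv_{n-\tau_n}-T\xv_n\rangle .
\end{align*}
The first piece is at most $f(\xv^*)-f(T\xv_{n-\tau_n})$ by the subgradient inequality at $T\xv_{n-\tau_n}$. The second piece is handled by Cauchy--Schwarz, the bound $C$, and the nonexpansiveness of $T$ from Fact \ref{fne-fact} (1), which gives at most $C\|\xv_{n-\tau_n}-\xv_n\|$. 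After multiplying by $2\alpha_n$, these yield the terms $2\alpha_n(f(\xv^*)-f(T\xv_{n-\tau_n}))$ and $2\alpha_nC\|\xv_{n-\tau_n}-\xv_n\|$.

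Combining the two estimates and rearranging gives the stated inequality. The ``for all $N$'' in the statement should read ``for all $n$''. No real obstacle is expected. The only point needing care is to use the subgradient inequality at the delayed point $T\xv_{n-\tau_n}$ itself, so that no extra subgradient terms appear. Indices with $n-\tau_n<0$ are covered by the convention $\xv_0=\xv_{-1}=\cdots=\xv_{-\tau}$, and Assumption \ref{A:sg bbd} then applies to them as well.
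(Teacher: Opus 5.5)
Your proof is correct and takes essentially the paper's route: you start from \eqref{L2}, split the first inner product at $T\xv_{n-\tau_n}$ (subgradient inequality, then Cauchy--Schwarz with nonexpansiveness of $T$), bound the second inner product by Cauchy--Schwarz, and finish with the identity \eqref{L11}. You also correctly use $\widetilde{\nabla}f(T\xv_{n-\tau_n})$ where the paper's displays \eqref{LL2}--\eqref{LL3} contain the typo $\widetilde{\nabla}f(\xv_{n-\tau_n})$.
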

\begin{proof}
    Let $n\in \mathbb{N}_0$ and $\xv^* \in \fix T$.
    As the same analogy of the inequality \eqref{L2} in Lemma \ref{mainlemma}, we obtain 
    \begin{align}\label{LL1}
        2\langle \xv_n-\xv_{n+1}, \xv^*-\xv_{n+1} \rangle 
        \leq 2\alpha_n\langle \widetilde{\nabla}f(T\xv_{n-\tau_n}),\xv^*-T\xv_n\rangle + 2\alpha_n\langle \widetilde{\nabla}f(T\xv_{n-\tau_n}),\xv_n-\xv_{n+1}\rangle. 
    \end{align}
Let us focus on the terms at the right-hand side of \eqref{LL1}. First, we obtain by using the definition of the subgradient $f$ at $T\xv_{n-\tau_n}$, the Cauchy-Schwarz inequality, the nonexpansiveness of $T$, and Assumption \ref{A:sg bbd} that
{\small \begin{align}\label{LL2}
    2\alpha_n\langle \widetilde{\nabla}f(\xv_{n-\tau_n}),\xv^*-T\xv_{n}\rangle
    =&  2\alpha_n\langle \widetilde{\nabla}f(T\xv_{n-\tau_n}),\xv^*-T\xv_{n-\tau_n}\rangle + 2\alpha_n\langle \widetilde{\nabla}f(T\xv_{n-\tau_n}),T\xv_{n-\tau_n}-T\xv_n\rangle\nonumber\\
    \leq& 2\alpha_n(f(\xv^*)-f(T\xv_{n-\tau_n})) + 2\alpha_nC\|\xv_{n-\tau_n}-\xv_n\|.
\end{align} }
Second, the Cauchy-Schwarz inequality and Assumption \ref{A:sg bbd} lead to
\begin{align}\label{LL3}
      2\alpha_n\langle \widetilde{\nabla}f(\xv_{n-\tau_n}),\xv_n-\xv_{n+1}\rangle 
     \leq 2\alpha_nC\|\xv_n-\xv_{n+1}\|.
\end{align}
Finally, we note that
\begin{align}\label{LL4}
        2\langle \xv_n-\xv_{n+1},\xv^*-\xv_{n+1}\rangle = \Vert \xv_n-\xv_{n+1}\Vert^2 + \Vert \xv_{n+1}-\xv^* \Vert^2 - \Vert \xv_n-\xv^* \Vert^2.
\end{align}
By substituting \eqref{LL2}, \eqref{LL3}, and \eqref{LL4} into \eqref{LL1}, we obtain the required inequality.  
\end{proof}

The next theorem shows the existence of a subsequence of $\{\xv_n\}_{n=0}^\infty$ that converges to an optimal solution of the problem  (\ref{main-pb}). 

\begin{theorem}\label{thm:con}
   Let  $\{\xv_n\}_{n=0}^\infty$ be a sequence generated by Algorithm \ref{our AI}.  
Suppose that Assumptions \ref{assumption_tau} and \ref{A: xn bbd} hold and assume that $\mathcal{S}\neq\emptyset$. If  the sequence $\{\alpha_n\}_{n=0}^\infty\subset (0,\infty)$ satisfies that 
$\lim_{n\to \infty}\alpha_n=0$ and 
$\sum_{n=0}^\infty \alpha_n = \infty$, then there exists a subsequence of $\{\xv_n\}_{n=0}^\infty$ that converges to an optimal solution in $\mathcal{S}$.
\end{theorem}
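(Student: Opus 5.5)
Fix $\xv^*\in\mathcal{S}$ and set $a_n:=\Vert \xv_n-\xv^*\Vert^2$. Assumption \ref{A: xn bbd} together with Facts \ref{sg-bbd} and \ref{f-Lip} (applied on the bounded set containing $\{\xv_n\}$, $\{T\xv_n\}$ and $\xv^*$) gives Assumption \ref{A:sg bbd} with some $C>0$, and also a Lipschitz constant $L$ for $f$ there. So Lemma \ref{LL} is available. Write $\yv_n:=T\xv_{n-\tau_n}$.

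\textbf{Small increments.} Since $\xv_{n+1}-\xv_n=(T\xv_n-\xv_n)-\alpha_n\widetilde{\nabla}f(\yv_n)$, we have $\Vert\xv_{n+1}-\xv_n\Vert\le \Vert T\xv_n-\xv_n\Vert+\alpha_n C$, and hence $\Vert \xv_{n-\tau_n}-\xv_n\Vert\le\sum_{i=1}^{\tau}\Vert \xv_{n-i+1}-\xv_{n-i}\Vert$. In Lemma \ref{LL}, every error term carries a factor $\alpha_n$ times a bounded quantity, and these quantities are bounded by $2M$-type constants. Together with the cutter inequality \eqref{L1} (strengthened via firm nonexpansiveness to $\Vert T\xv_n-\xv^*\Vert^2\le\Vert\xv_n-\xv^*\Vert^2-\Vert T\xv_n-\xv_n\Vert^2$), this yields an inequality of the form
\[
a_{n+1}\le a_n-\Vert T\xv_n-\xv_n\Vert^2+2\alpha_n\bigl(f^*-f(\yv_n)\bigr)+\alpha_n K ,
\]
with $K$ bounded.

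\textbf{Case 1: $\{a_n\}$ is eventually nonincreasing.} Then $a_n$ converges. The inequality above gives $\Vert T\xv_n-\xv_n\Vert\to0$, since $\alpha_n\to0$. Consequently $\Vert\xv_{n+1}-\xv_n\Vert\to0$ and $\Vert \xv_{n-\tau_n}-\xv_n\Vert\to 0$, because $\tau$ is fixed. Now I use Lemma \ref{LL} to show $\liminf_n f(\yv_n)\le f^*$. If instead $f(\yv_n)\ge f^*+\varepsilon$ for all large $n$, then the lemma gives $a_{n+1}\le a_n-\alpha_n\bigl(2\varepsilon-2C\Vert\xv_{n-\tau_n}-\xv_n\Vert-2C\Vert\xv_{n+1}-\xv_n\Vert\bigr)\le a_n-\alpha_n\varepsilon$ eventually. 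Summing and using $\sum\alpha_n=\infty$ contradicts $a_n\ge0$. Next I pick a subsequence with $f(\yv_{n_k})\to\liminf_n f(\yv_n)$ and, by boundedness, $\xv_{n_k}\to\bar\xv$. Then $\yv_{n_k}\to\bar\xv$ as well, because $\Vert\yv_n-\xv_n\Vert\le\Vert \xv_{n-\tau_n}-\xv_n\Vert+\Vert T\xv_n-\xv_n\Vert\to0$. Fact \ref{fne-fact}(4) gives $\bar\xv\in\fix T$, and continuity gives $f(\bar\xv)\le f^*$. Hence $\bar\xv\in\mathcal{S}$.

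\textbf{Case 2: otherwise.} Then there is a subsequence with $a_{n_j}<a_{n_j+1}$, and I apply Lemma \ref{prop1} to get $\mu(n)\to\infty$ with $a_{\mu(n)}\le a_{\mu(n)+1}$. Plugging $n=\mu(n)$ into the inequality gives $\Vert T\xv_{\mu(n)}-\xv_{\mu(n)}\Vert^2\le\alpha_{\mu(n)}K\to0$. It also gives
\[
2\bigl(f(\yv_{\mu(n)})-f^*\bigr)\le K',
\]
which is only a bounded bound. To get the sharper bound $f(\yv_{\mu(n)})-f^*\le o(1)$, I use Lemma \ref{LL} directly. There, with $a_{\mu(n)+1}-a_{\mu(n)}\ge0$, dividing by $2\alpha_{\mu(n)}$ gives
\[
f(\yv_{\mu(n)})-f^*\le C\bigl(\Vert\xv_{\mu(n)-\tau}-\xv_{\mu(n)}\Vert+\Vert\xv_{\mu(n)+1}-\xv_{\mu(n)}\Vert\bigr)-\frac{\Vert\xv_{\mu(n)+1}-\xv_{\mu(n)}\Vert^2}{2\alpha_{\mu(n)}}.
\]

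\textbf{Main obstacle.} The difficulty is the delayed increment $\Vert\xv_{\mu(n)-\tau_{\mu(n)}}-\xv_{\mu(n)}\Vert$. It involves earlier indices, where the monotonicity information at $\mu(n)$ says nothing. I would handle it by noting that the increments $\Vert\xv_{k+1}-\xv_k\Vert\le\Vert T\xv_k-\xv_k\Vert+\alpha_kC$, so it suffices to show $\Vert T\xv_k-\xv_k\Vert\to0$ along all $k\in[\mu(n)-\tau,\mu(n)]$. I would try to obtain this from a windowed version of the inequality over the last $\tau+1$ steps. If that fails, I would fall back to extracting a subsequence of $\mu(n)$ along which these finitely many shifted residuals vanish, using $\sum\alpha_n=\infty$ as in Case 1. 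Once $f(\yv_{\mu(n)})$ has $\limsup\le f^*$ along a subsequence, the limit-point argument from Case 1, via Fact \ref{fne-fact}(4), produces a subsequence of $\{\xv_n\}$ converging to a point of $\mathcal{S}$.
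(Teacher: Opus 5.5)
Your setup, your recursion $a_{n+1}\le a_n-\|T\xv_n-\xv_n\|^2+2\alpha_n(f^*-f(\yv_n))+\alpha_nK$, and your Case 1 are correct. Case 1 is essentially the paper's argument, run on $f(\yv_n)$ instead of $f(\xv_n)$.

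Case 2, however, is not a proof. You never establish $\|\xv_{\mu(n)-\tau_{\mu(n)}}-\xv_{\mu(n)}\|\to0$. Without it you get neither $\limsup_n f(\yv_{\mu(n)})\le f^*$ nor $\|\yv_{\mu(n)}-\xv_{\mu(n)}\|\to0$, and the demiclosedness/continuity step needs both. The inequality $a_{\mu(n)}\le a_{\mu(n)+1}$ controls the residual only at the single index $\mu(n)$. At $\mu(n)-1,\dots,\mu(n)-\tau$ the sequence $\{a_k\}$ may be dropping by large amounts, so nothing bounds those increments. Your fallback does not fix this: in Case 1, $\sum\alpha_n=\infty$ contradicts a uniform gap along an entire tail, and there is no such tail at the shifted indices $\mu(n)-i$. (The paper's Case 2 has the same hole. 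It obtains \eqref{tt9} ``by following the above lines'', although \eqref{tt7} says nothing about the indices $\mu(n)-i$ with $i\ge1$.)

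Your first idea, a windowed inequality, does work if you run the Maing\'e argument on a Lyapunov sequence instead of on $a_n$. Take $\tau\ge1$ (for $\tau=0$ your Case 2 already closes) and put $r_k:=\|T\xv_k-\xv_k\|^2$. Keep the delay term explicit rather than absorbing it into $\alpha_nK$:
\[
a_{n+1}\le a_n-r_n-2\alpha_n\bigl(f(\yv_n)-f^*\bigr)+2\alpha_nC\|\xv_{n-\tau_n}-\xv_n\|+\alpha_n^2C^2 .
\]
Bound $\|\xv_{n-\tau_n}-\xv_n\|\le\sum_{i=1}^{\tau}(\sqrt{r_{n-i}}+\alpha_{n-i}C)$, using that increments at negative indices vanish. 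Together with $2\alpha_nC\sqrt{r_{n-i}}\le\frac{1}{4\tau}r_{n-i}+4\tau C^2\alpha_n^2$, this gives
\[
a_{n+1}\le a_n-r_n+\frac{1}{4\tau}\sum_{i=1}^{\tau}r_{n-i}-2\alpha_n\bigl(f(\yv_n)-f^*\bigr)+\alpha_n\varepsilon_n,
\qquad \varepsilon_n:=(4\tau^2+1)C^2\alpha_n+2C^2\sum_{i=1}^{\tau}\alpha_{n-i}\to0 .
\]
Set $V_n:=a_n+\frac{1}{2\tau}\sum_{i=1}^{\tau}(\tau-i+1)r_{n-i}\ge0$. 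One checks that $V_{n+1}-V_n=a_{n+1}-a_n+\frac12 r_n-\frac{1}{2\tau}\sum_{i=1}^{\tau}r_{n-i}$, so
\[
V_{n+1}\le V_n-\frac{1}{4\tau}\sum_{i=0}^{\tau}r_{n-i}-2\alpha_n\bigl(f(\yv_n)-f^*\bigr)+\alpha_n\varepsilon_n .
\]
Now split cases on $\{V_n\}$.

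If $\{V_n\}$ is eventually nonincreasing, it converges, so $r_n\to0$ and your Case 1 runs unchanged.

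Otherwise, Lemma \ref{prop1} applied to $\{V_n\}$ gives $V_{\mu(n)}\le V_{\mu(n)+1}$. Since $f(\yv_k)-f^*\ge-B$ for some constant $B$ (Lipschitz continuity on a bounded set), this yields
\[
\sum_{i=0}^{\tau}r_{\mu(n)-i}\le4\tau\alpha_{\mu(n)}\bigl(\varepsilon_{\mu(n)}+2B\bigr)\to0 .
\]
By your increment bound $\|\xv_{k+1}-\xv_k\|\le\sqrt{r_k}+\alpha_kC$, every increment in the window $[\mu(n)-\tau,\mu(n)]$ then vanishes. Hence $\|\xv_{\mu(n)-\tau_{\mu(n)}}-\xv_{\mu(n)}\|\to0$ and $\|\yv_{\mu(n)}-\xv_{\mu(n)}\|\to0$. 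Dropping the residuals in the same inequality gives $f(\yv_{\mu(n)})-f^*\le\varepsilon_{\mu(n)}/2\to0$. Your Case 1 limit-point argument then finishes the proof.
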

\begin{proof}
    Let $\xv^* \in \mathcal{S}$ be given.
The boundedness of the sequence $\{\xv_n\}_{n=0}^\infty$ yields that  
$\Vert \xv_{n+1}-\xv_n \Vert \leq 2M, 
\Vert \xv_{n-\tau_n}-\xv_n \Vert \leq 2M$
 and 
$\|T\xv_{n-\tau_n}-\xv^*\|\leq \|\xv_{n-\tau_n} - \xv^*\|\leq M+\|\xv^*\|$
 for all $n\in\N_0$. Moreover, since $f$ is Lipschitz continuous relative to every bounded subset of $\R^d$, there is a Lipschitz constant $L>0$ in which
$
f(\xv^*)-f(T\xv_{n-\tau_n}) \leq L\Vert \xv^* - T\xv_{n-\tau_n} \Vert \leq L(\Vert \xv^* \Vert + M)
$  for all $n\in\N_0$. 
Thus it follows from Lemma \ref{LL} that, for all $n\in\N_0$,
\begin{align}\label{tt1}
 \Vert \xv_{n+1}-\xv_n \Vert^2 
            \leq \Vert \xv_n-\xv^*\Vert^2 - \Vert \xv_{n+1}-\xv^* \Vert^2 + 8\alpha_nCM + 2\alpha_nL(\|\xv^*\|+M).
\end{align}

Next, we divide the existence of a convergent subsequence of $\{\xv_n\}_{n=0}^\infty$ into two cases according to the behaviors of the sequence $\{ \Vert \xv_n - \xv^* \Vert \}_{n=0}^\infty$.
\vskip 2mm
{\bf Case 1:} Suppose that there exists 
$n_0\in \mathbb{N}_0$ such that $\Vert \xv_{n+1}-\xv^* \Vert^2 \leq \Vert \xv_n-\xv^* \Vert^2$
 for all $n \geq n_0$. 
 
 Since the sequence $\{ \Vert \xv_n-\xv^* \Vert \}_{n=0}^\infty$ is bounded from below, we can ensure that $\displaystyle \lim_{n\to \infty}\Vert \xv_n- \xv^* \Vert $ exists.
    By approaching the limit as $n\to \infty$ in \eqref{tt1} and using the fact that $\displaystyle\lim_{n\to \infty}\alpha_n =0$, we obtain 
    \begin{align}\label{tt2.1}
        \lim_{n\to \infty}\Vert \xv_{n+1}-\xv_n \Vert =0.
    \end{align}
     Observing that, 
$\Vert T\xv_n-\xv_n \Vert \leq \Vert T\xv_n-\xv_{n+1}\Vert + \Vert \xv_{n+1}-\xv_n \Vert         \leq  \alpha_n C + \Vert \xv_{n+1}-\xv_n \Vert$ for all $n\in\N_0$,
 By approaching the limit as $n\to \infty$, we have 
   \begin{align}\label{tt2}
       \lim_{n\to \infty}\Vert T\xv_n -\xv_n \Vert =0.
   \end{align}
Again, for all $n\in \mathbb{N}_0$, we have
\begin{align*}
    \|\xv_n-\xv_{n-\tau_n}\| 
    = \left\|\sum_{i=1}^{\tau_n} (\xv_{n-i+1}-\xv_{n-i})\right\|
    \leq \sum_{i=1}^{\tau_n}\|\xv_{n-i+1}-\xv_{n-i}\|
    \leq \sum_{i=1}^\tau\|\xv_{n-i+1}-\xv_{n-i}\|,
\end{align*}
and then approaching the limit as $n\to \infty$ together with $\lim_{n\to \infty}\|\xv_{n+1}-\xv_n\|=0$ implies
\begin{align}\label{tt3}
    \lim_{n\to \infty}\|\xv_{n-\tau_n}-\xv_n\|=0.
\end{align}
 On the other hand, let us note that,  for all $n\in\N_0$,
 \begin{align}\label{tt6}
    f(T\xv_{n-\tau_n})-f(\xv^*) 
    =& f(T\xv_{n-\tau_n})-f(T\xv_n) + f(T\xv_n)-f(\xv_{n}) + f(\xv_{n})-f(\xv^*) \nonumber\\
    \geq& -L\|T\xv_{n-\tau_n}-T\xv_n\|-L\Vert T\xv_n-\xv_{n}\Vert + f(\xv_{n})-f(\xv^*) \nonumber\\
    \geq& -L\|\xv_{n-\tau_n}-\xv_n\|-L\|T\xv_n-\xv_n\| + f(\xv_{n})-f(\xv^*). 
   \end{align}  
   By invoking this relation in the inequality obtained in Lemma \ref{LL}, it follows that, for all $n\in\N_0$,
   \begin{align*}
      2\alpha_n(f(\xv_{n})-f(\xv^*))
       \leq& \Vert \xv_n-\xv^*\Vert^2 - \Vert \xv_{n+1}-\xv^*\Vert^2 +2\alpha_n (C+L)\|\xv_n-\xv_{n-\tau_n}\| \\
       &+2\alpha_nC\|\xv_{n+1}-\xv_n\|  + 2L\alpha_n\|T\xv_n-\xv_n\|.
   \end{align*}
For any $n\in \mathbb{N}_0$, we denote
     $M_n:= f(\xv_{n})-f(\xv^*) - (C+L)\|\xv_n-\xv_{n-\tau_n}\|-C\|\xv_{n+1}-\xv_n\|-L\|T\xv_n-\xv_n\|,$
 which yields that the above inequality becomes
     $\alpha_n M_n \leq \frac{\Vert \xv_n-\xv^*\Vert^2}{2} -\frac{\Vert \xv_{n+1}-\xv^*\Vert^2}{2}.$   
Let $N\in \mathbb{N}_0$ fixed.
By summing up this inequality from $n=0$ to $N$, we obtain 
\begin{align*}
\sum_{n=0}^N\alpha_nM_n \leq \frac{\Vert \xv_0-\xv^*\Vert^2}{2} -\frac{\Vert \xv_{N+1}-\xv^*\Vert^2}{2} \leq \frac{\Vert \xv_0-\xv^*\Vert^2}{2},  
\end{align*}
and so $\sum_{n=0}^\infty\alpha_nM_n < \infty$.
We will show that $\liminf_{n\to \infty}M_n \leq 0$.
Assume to the contrary that
$\liminf_{n\to \infty}M_n >0$.
Consequently, there exist $m\in \mathbb{N}_0$ and $\epsilon >0$ such that
$M_n \geq \epsilon$ for all $n\geq m$.
It follows that
$\infty=\sum_{n=m}^\infty\epsilon\alpha_n \leq \sum_{n=m}^\infty\alpha_nM_n < \infty,$
which is a contradiction.
Thus, $\liminf_{n\to \infty}M_n \leq 0$.
By using \eqref{tt2.1}, \eqref{tt2}, and \eqref{tt3}, we obtain
\begin{align}
    \liminf_{n\to \infty}f(\xv_{n})\leq f(\xv^*).
\end{align}

It follows from the boundedness of $\{\xv_{n}\}_{n=0}^\infty$ and the continuity of $f$ that there exists a subsequence $\{\xv_{n_p}\}_{p=0}^\infty$ of $\{\xv_{n}\}_{n=0}^\infty$ such that
\begin{align}\label{C22}
    \lim_{p\to \infty}f(\xv_{n_p})=\liminf_{n\to \infty}f(\xv_{n}) \leq f(\xv^*).
\end{align}

Now, since $\{\xv_{n_p}\}_{p=0}^\infty$ is also a bounded sequence, there exists a subsequence $\{\xv_{n_{p_l}}\}_{l=0}^\infty$ of $\{\xv_{n_p}\}_{p=0}^\infty$ such that
\begin{align}\label{C23}
    \lim_{l\to \infty}\xv_{n_{p_l}} = \overline{\xv} \in \mathbb{R}^d.
\end{align}
Invoking the demi-closed principle of $T$ as in Fact \ref{fne-fact} (3),  \eqref{tt2} and \eqref{C23}, we ensure that $\overline{\xv}\in \fix T$.

 Finally, by applying the continuity of $f$ together with the obtained results \eqref{C22} and \eqref{C23}, we have
$f(\overline{\xv})=\displaystyle\lim_{l \to \infty}f(\xv_{n_{p_l}}) \leq f(\xv^*),$
which  implies that $\overline{\xv} \in \mathcal{S}$.

\vskip 2mm

{\bf Case 2:} Suppose that there exists a subsequence $\{\xv_{n_j}\}_{j=0}^\infty$ of $\{\xv_n\}_{n=0}^\infty$ such that $\Vert \xv_{n_j}-\xv^*\Vert^2 < \Vert \xv_{n_j+1}-\xv^*\Vert^2$ for all $j\in \mathbb{N}_0$.

Let $m_0$ be the smallest integer such that $\|\xv_{m_0}-\xv^*\|^2 < \|\xv_{m_0+1}-\xv^*\|^2$.
Thus, by defining the indexing sequence $\{\mu(n)\}_{n=m_0}^\infty$ 
as in Lemma \ref{prop1}, we have 

\begin{align}\label{tt5}
\Vert \xv_{\mu(n)}-\xv^*\Vert^2 \leq \Vert \xv_{\mu(n)+1}-\xv^*\Vert^2
\end{align}
for all $n \geq m_0$.
It follows that the inequality \eqref{tt1} becomes
 $\Vert \xv_{\mu(n)+1}-\xv_{\mu(n)} \Vert^2 
            \leq  8\alpha_{\mu(n)}CM + 2\alpha_{\mu(n)}L(\|\xv^*\|+M)$
 for all $n\geq m_0$. Thus, by approaching $n\to\infty$ and using the fact that $\displaystyle\lim_{n\to \infty} \alpha_n=0$, we get
\begin{align}\label{tt7}
    \lim_{n\to \infty} \Vert \xv_{\mu(n)+1}-\xv_{\mu(n)}\Vert =0.
\end{align}
By following the above lines, we also obtain 
   \begin{align}\label{tt8}
       \lim_{n\to \infty}\Vert T\xv_{\mu(n)} -\xv_{\mu(n)} \Vert =0.
   \end{align}
and
 \begin{align}\label{tt9}
       \lim_{n\to \infty}\Vert \xv_{\mu(n)}- \xv_{\mu(n)-\tau_{\mu(n)}}  \Vert =0,
   \end{align}
respectively.
Again, by applying the inequality obtained in Lemma \ref{LL} with \eqref{tt6} and \eqref{tt5}, and dividing both sides by $2\alpha_{\mu(n)}>0$, we have 
       $$f(\xv_{\mu(n)})-f(\xv^*)
       \leq  (C+L)\|\xv_{\mu(n)}-\xv_{\mu(n)-\tau_{\mu(n)}}\|+ C\|\xv_{\mu(n)+1}-\xv_{\mu(n)}\|  + L\|T\xv_{\mu(n)}-\xv_{\mu(n)}\|.$$
   Subsequently, by using \eqref{tt7}, \eqref{tt8}, and \eqref{tt9}, we obtain that
       $$\limsup_{n\to \infty}f(\xv_{\mu(n)})\leq f(\xv^*).$$

Fix a subsequence $\{ \xv_{\mu(n_{p})}\}_{p=0}^\infty$ of $\{ \xv_{\mu(n)}\}_{n=m_0}^\infty$.
It follows that
$$\limsup_{p \to \infty}f(\xv_{\mu(n_{p})})\leq \limsup_{n\to \infty}f(\xv_{\mu(n)})\leq f(\xv^*).$$
The boundedness of $\{ \xv_{\mu(n_p)}\}_{p=0}^\infty$  ensures that there exists a subsequence $\{ \xv_{\mu(n_{p_l})}\}_{l=0}^\infty$ of $\{ \xv_{\mu(n_p)}\}_{p=0}^\infty$ such that
\begin{align}\label{C36}
    \displaystyle \lim_{l\to \infty} \xv_{\mu(n_{p_l})} = \overline{\xv}\in \mathbb{R}^d.
\end{align}
The demi-closed principle of $T$ together with the relations \eqref{tt8} and \eqref{C36} imply $\overline{\xv}\in \mathrm{Fix}T.$
Hence the continuity of $f$ yields
$$
f(\overline{\xv}) 
= \lim_{l\to \infty} f(\xv_{\mu(n_{p_l})}) 
=\limsup_{l\to \infty}f(\xv_{\mu(n_{p_l})})
\leq f(\xv^*)
$$ and so $\overline{\xv}\in \mathcal{S}$.

Hence, from {\bf Case 1} and {\bf Case 2}, we conclude that there exists a subsequence of $\{\xv_n\}_{n=0}^\infty$ that converges to an optimal solution $\overline{\xv} \in \mathcal{S}$.
This completes the proof.    
\end{proof}

By imposing a stronger assumption on the objective function $f$ to be strictly convex, we can obtain the convergence of the whole sequence $\{\xv_n\}_{n=0}^\infty$ to a unique optimal solution of the problem (\ref{main-pb}) provided that it exists:

\begin{corollary}\label{cor:main}
  Let  $\{\xv_n\}_{n=0}^\infty$ be a sequence generated by Algorithm \ref{our AI} and $a\in (0,1)$.  
Suppose that Assumptions \ref{assumption_tau} and \ref{A: xn bbd} hold and assume that $\mathcal{S}\neq\emptyset$. If the function $f$ is strictly convex and the sequence $\{\alpha_n\}_{n=0}^\infty\subset (0,\infty)$  
satisfies that $\lim_{n\to \infty}\alpha_n=0$ and $\sum_{n=0}^\infty \alpha_n = \infty$, then the sequence $\{\xv_n\}_{n=0}^\infty$ converges to a unique optimal solution of the problem (\ref{main-pb}).
\end{corollary}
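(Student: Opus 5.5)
Strict convexity of $f$ together with convexity of $\fix T$ (Fact \ref{fne-fact} (3)) gives uniqueness: if $\xv^*,\yv^*\in\mathcal{S}$ were distinct, the midpoint would lie in $\fix T$ and satisfy $f(\tfrac12\xv^*+\tfrac12\yv^*)<f^*$, which is impossible. So $\mathcal{S}=\{\xv^*\}$. The plan is then to reuse the two-case structure from the proof of Theorem \ref{thm:con}, applied to $a_n:=\|\xv_n-\xv^*\|^2$, and upgrade subsequential convergence to convergence of the whole sequence. Throughout we use the bounds from that proof: $\|\xv_n\|\le M$, a Lipschitz constant $L$ for $f$ on the relevant bounded set, and a subgradient bound $C$, which exists by Fact \ref{sg-bbd} on bounded sets.

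\textbf{Case 1.} Here $a_{n+1}\le a_n$ for all $n\ge n_0$, so $\lim_n a_n$ exists. The proof of Theorem \ref{thm:con} produces a subsequence $\xv_{n_{p_l}}\to\overline{\xv}\in\mathcal{S}=\{\xv^*\}$. Hence $a_{n_{p_l}}\to 0$, and since $\lim_n a_n$ exists, the whole sequence satisfies $a_n\to0$.

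\textbf{Case 2.} Here there are infinitely many $n$ with $a_n<a_{n+1}$. Define $\mu(n)$ as in Lemma \ref{prop1}, so that $a_n\le a_{\mu(n)+1}$. It therefore suffices to show $a_{\mu(n)+1}\to0$. From the proof of Theorem \ref{thm:con} we already have:
\begin{itemize}
\item $\|\xv_{\mu(n)+1}-\xv_{\mu(n)}\|\to0$;
\item $\|T\xv_{\mu(n)}-\xv_{\mu(n)}\|\to0$;
\item $\limsup_n f(\xv_{\mu(n)})\le f^*$.
\end{itemize}
Take any subsequence of $\{\xv_{\mu(n)}\}$. By boundedness it has a further subsequence converging to some $\overline{\xv}$. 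Demiclosedness gives $\overline{\xv}\in\fix T$, and continuity of $f$ gives $f(\overline{\xv})\le f^*$. Thus $\overline{\xv}\in\mathcal{S}$, so $\overline{\xv}=\xv^*$. Since every subsequence has a further subsequence converging to $\xv^*$, the whole sequence satisfies $\xv_{\mu(n)}\to\xv^*$. Then
\[
\|\xv_{\mu(n)+1}-\xv^*\|\le\|\xv_{\mu(n)+1}-\xv_{\mu(n)}\|+\|\xv_{\mu(n)}-\xv^*\|\to0,
\]
so $a_n\le a_{\mu(n)+1}\to0$ for $n\ge m_0$.

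\textbf{Main obstacle.} The delicate point is Case 2. Theorem \ref{thm:con} as stated only yields one convergent subsequence, whereas here we need the full sequence $\{\xv_{\mu(n)}\}$ to converge. The step that makes this work is the "every subsequence" argument, which uses only the $\limsup$ bound and uniqueness of the solution. We must check that the $\limsup$ estimate in Case 2 of Theorem \ref{thm:con} holds along all $n\ge m_0$ and not only along a chosen subsequence; it does, since it was derived for all $n\ge m_0$. The assumptions $\alpha_n\to0$ and $\sum_n\alpha_n=\infty$ enter only through that theorem's proof.
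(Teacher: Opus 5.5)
Your proposal is correct and follows the paper's proof step for step. It gets uniqueness from strict convexity, and you supply the midpoint argument that the paper only asserts. Case~1 uses the existence of $\lim_n\|\xv_n-\xv^*\|$. Case~2 uses the every-subsequence argument on $\{\xv_{\mu(n)}\}$ together with $\|\xv_n-\xv^*\|\le\|\xv_{\mu(n)+1}-\xv^*\|$ from Lemma~\ref{prop1}. There you also make explicit the step $\|\xv_{\mu(n)+1}-\xv^*\|\le\|\xv_{\mu(n)+1}-\xv_{\mu(n)}\|+\|\xv_{\mu(n)}-\xv^*\|\to0$, which the paper leaves implicit.

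One point you inherit from the paper rather than check. The imported bound $\limsup_n f(\xv_{\mu(n)})\le f^*$ rests on \eqref{tt9}, i.e.\ on small steps at the $\tau$ indices preceding $\mu(n)$, and \eqref{tt7} alone does not provide this. It can be repaired as follows: firm nonexpansiveness, $\alpha_n\to0$ and a compactness argument on the bounded iterates give $\mathrm{dist}(\xv_n,\fix T)\to0$. Hence $\|T\xv_n-\xv_n\|\to0$ and $\|\xv_{n+1}-\xv_n\|\to0$ along the whole sequence, which yields \eqref{tt9}.
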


\begin{proof}
Suppose that $f$ is strictly convex.
Since $\mathcal{S} \neq \emptyset,$ the strictly convexity of $f$ implies that the problem (\ref{main-pb}) has a unique solution, denoted by $\xv^*,$ i.e., $\mathcal{S}=\{\xv^*\}.$

Now, based on the proving lines of Theorem \ref{thm:con}, we will divide the proof into two cases as follows:
\vskip 2mm

{\bf Case 1:} 
As we obtain in {\bf Case 1} of Theorem \ref{thm:con}, it is clear that the whole sequence $\{\xv_n\}_{n=0}^\infty$ converges to $\xv^*$. 
\vskip 1mm

{\bf Case 2:}
According to the obtained result in {\bf Case 2} of Theorem \ref{thm:con} that for any a subsequence $\{\xv_{\mu(n_p)}\}_{p=0}^\infty$ of $\{\xv_{\mu(n)}\}_{n=m_0}^\infty$, there exists a subsequence $\{\xv_{\mu(n_{p_l})}\}_{l=0}^\infty$ of $\{\xv_{\mu(n_p)}\}_{p=0}^\infty$ such that 
$\lim_{l\to \infty}\xv_{\mu(n_{p_l})}=\xv^*$.

Now, let $\{\xv_{\mu(n_{p_k})}\}_{k=0}^\infty$ be any subsequence of $\{\xv_{\mu(n_p)}\}_{p=0}^\infty$ such that 
$\lim_{k\to \infty}\xv_{\mu(n_{p_k})}=\xv'\in \mathbb{R}^d$.
In a similar fashion in showing that $\overline{\xv} \in \mathcal{S}$ as in Theorem \ref{thm:con}, we also have  $\xv'\in \mathcal{S}$, which yields that $\xv'=\xv^*$. 
Hence the boundedness of $\{ \xv_{\mu(n_p)}\}_{p=0}^\infty$ ensures that $\displaystyle \lim_{p\to \infty}\xv_{\mu(n_p)} = \xv^*$.
Since $\{ \xv_{\mu(n_p)}\}_{p=0}^\infty$ is an arbitrary subsequence of $\{\xv_{\mu(n)}\}_{n=m_0}^\infty$ and the sequence $\{\xv_{\mu(n)}\}_{n=m_0}^\infty$ is bounded, we obtain $\lim_{n\to \infty}\xv_{\mu(n)}=\xv^*$.
From Lemma \ref{prop1}, we have 
    $\Vert \xv_n-\xv^*\Vert^2 \leq \Vert \xv_{\mu(n)+1}-\xv^*\Vert^2$
    for all $n\geq m_0$.
Hence, by taking limit as $n\to \infty$, 
 it follows that $\displaystyle\lim_{n\to \infty}\Vert \xv_n-\xv^*\Vert =0$,
which therefore implies that the whole $\{\xv_n\}_{n=0}^\infty$ converges to $\xv^*$. This completes the proof.   
\end{proof}

\section{Inexact Version}

In this subsection, we extend Algorithm \ref{our AI} to a more general version by using an approximate subgradient, which is useful in practice when exact subgradients are difficult to compute. We then provide convergence results of generalized methods by following Algorithm \ref{our AI}.
\vskip 2mm

First, we recall the definition and useful fact of $\epsilon$-subgradient (see the Zalinescu's book \cite{Z-02} for more details).
Let $f:\mathbb{R}^d \to \mathbb{R}$ be convex, let $\xv\in \mathbb{R}^d$  and $\varepsilon \geq 0$, the {\it $\varepsilon-$subdifferential} of $f$ at $\xv$ is the set
\begin{align*}
    \partial_\varepsilon f(\xv) := \{\gv\in \mathbb{R}^d : \langle \gv,\yv-\xv \rangle \leq f(\yv)-f(\xv)+\varepsilon \text{ for all } \yv \in \mathbb{R}^d\}.
\end{align*}
The point $\gv\in \partial_\varepsilon f(\xv)$ is called an {\it $\varepsilon-$subgradient of $f$} at $\xv$, which is denoted by $\widetilde{\nabla}_\varepsilon f(\xv).$
It is obvious that $ \partial f(\xv) \subset \partial_\varepsilon f(\xv).$
The convexity of $f$ ensures that $ \partial_\varepsilon f(\xv) \neq \emptyset$ for all $\xv\in \mathbb{R}^d$ and $\varepsilon \geq 0$ \cite[Theorem 2.4.9]{Z-02}. 
Moreover, for any nonempty bounded set $X\subset \mathbb{R}^d,$ we also have
$ \bigcup_{\xv\in X}\partial_\varepsilon f(\xv)$ is nonempty and bounded \cite[Theorem 2.4.13]{Z-02}.
\vskip 2mm

We also assume the boundedness of delayed sequence as Assumption \ref{assumption_tau}, i.e., there exists a $\tau \geq 0$ such that
$ 0 \leq \tau_n \leq \tau$ for all $n \in \mathbb{N}_0$.
\vskip 2mm

An inexact version of Algorithm \ref{our AI} can be defined as follows.
\vskip2mm

\begin{algorithm}[H]
	\SetAlgoLined
	\vskip2mm
	\textbf{Initialization}: Given a stepsize $\{\alpha_n\}_{n=0}^\infty \subset (0,\infty)$, $\{ \varepsilon_n\}_{n=0}^\infty \subset [0,\infty)$ and
     initial points $\xv_0,\xv_{-1},\ldots,\xv_{-\tau}\in \mathbb{R}^d.$
	
	\textbf{Iterative Step}: For a current point $\xv_n\in \mathbb{R}^d$, we compute
    $$
\xv_{n+1}:= T\xv_n-\alpha_n {\widetilde{\nabla}}_{\varepsilon_{n-\tau_n}} f(T\xv_{n-\tau_n}),
$$
where $\widetilde{\nabla}_{\varepsilon_{n-\tau_n}}f(T\xv_{n-\tau_n})$ is an $\varepsilon_{n-\tau_n}$-subgradient of $f$ at $T\xv_{n-\tau_n}$.
 
	{\bf Update} $n:=n+1$.
	\caption{\bf Fixed-Point Delayed Approximate Subgradient Method}
	\label{our AI 2}
		\vskip2mm
\end{algorithm}
\vskip2mm

Actually, the convergent results of the sequence generated by Algorithm \ref{our AI 2} can be derived in the same manner as the previous subsections.
First, we provide the following lemma, which will play a crucial role in proving  the convergences.

This lemma is obtained by a slight modification of the line proof of Lemma \ref{mainlemma}.
In detail, we proceed under the boundedness of a  generated sequence and subsequently apply the Lipschitz continuity of $f$.

We assume the boundedness as the following assumption.
\begin{assumption}\label{A: xn epsilon bbd}
    Let $\{\xv_n\}_{n=0}^\infty$ be a sequence generated by Algorithm \ref{our AI 2}. There exists $M_2>0$ such that $\| \xv_n \| \leq M_2$ for all $n\in \mathbb{N}_0.$
\end{assumption}

The following proposition is a consequence from Assumption \ref{A: xn epsilon bbd}.
\begin{proposition}\label{LL-ep1}
    Let $\{\xv_n\}_{n=0}^\infty$ be a sequence generated by Algorithm \ref{our AI 2}.
    Suppose that Assumption \ref{A: xn epsilon bbd} holds. Then
    \begin{enumerate}
        \item[(i)] there exists $C_2>0$ such that
   $\|\widetilde{\nabla}_{\varepsilon_n}f(T\xv_n)\| \leq C_2$ for all $n\in \mathbb{N}_0$;    
        \item[(ii)] there exists $L>0$ such that $f(\xv)-f(\yv)\leq L\|\xv-\yv\|$ for all $\xv, \yv \in \{\xv_n, T\xv_n : n\in \mathbb{N}_0\}$.
    \end{enumerate}
\end{proposition}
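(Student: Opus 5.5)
The plan is to reduce both statements to the boundedness of a single set. Assumption \ref{A: xn epsilon bbd} gives $\|\xv_n\|\le M_2$ for all $n$. For any $\zv\in\fix T$, the nonexpansiveness of $T$ (Fact \ref{fne-fact} (1)) gives
$$\|T\xv_n\|\le \|T\xv_n-\zv\|+\|\zv\|\le \|\xv_n-\zv\|+\|\zv\|\le M_2+2\|\zv\|.$$
Here I use that $\fix T\neq\emptyset$, which is a standing hypothesis of the problem (\ref{main-pb}). Hence the set $X:=\{\xv_n, T\xv_n : n\in\N_0\}$ is nonempty and bounded, contained in the ball of radius $R:=M_2+2\|\zv\|$.

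For (i), the $\varepsilon_n$ vary with $n$, so they must be controlled uniformly. The simplest route is to assume (as is implicit in the setting) that $\{\varepsilon_n\}_{n=0}^\infty$ is bounded by some $\bar\varepsilon$. Then, since $\partial_{\varepsilon}f(\xv)\subset\partial_{\bar\varepsilon}f(\xv)$ for $\varepsilon\le\bar\varepsilon$, we have $\widetilde{\nabla}_{\varepsilon_n}f(T\xv_n)\in\bigcup_{\xv\in X}\partial_{\bar\varepsilon}f(\xv)$. The cited result \cite[Theorem 2.4.13]{Z-02} says this union is bounded, so its norm bound can serve as $C_2$.

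As a self-contained backup, I would prove the bound directly. Take $\gv\in\partial_{\varepsilon}f(\xv)$ with $\|\xv\|\le R$ and $\gv\neq 0$, and plug $\yv=\xv+\gv/\|\gv\|$ into the definition of the $\varepsilon$-subdifferential. This gives
$$\|\gv\|\le f(\yv)-f(\xv)+\varepsilon\le 2\sup_{\|\uv\|\le R+1}|f(\uv)|+\bar\varepsilon.$$
The supremum is finite because a convex real-valued function on $\R^d$ is continuous, hence bounded on compact sets.

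For (ii), I would apply Fact \ref{f-Lip} directly to the nonempty bounded set $X$. This yields $L>0$ with $|f(\xv)-f(\yv)|\le L\|\xv-\yv\|$ for all $\xv,\yv\in X$, which in particular gives the one-sided inequality stated.

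The main obstacle is (i): the uniformity in $n$ requires a uniform bound on $\varepsilon_n$. If the paper intends $\varepsilon_n$ to be arbitrary, I would state explicitly that boundedness of $\{\varepsilon_n\}$ (for instance $\varepsilon_n\to0$, as will be needed anyway for convergence) is being used.
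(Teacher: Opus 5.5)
Your proof is the paper's proof: you bound $\|T\xv_n\|$ by nonexpansiveness around a point of $\fix T$, then invoke \cite[Theorem 2.4.13]{Z-02} for (i) and Fact~\ref{f-Lip} for (ii). Your caveat about $\{\varepsilon_n\}$ is not a weakness of your argument but a genuine omission in the paper, which applies that fixed-$\varepsilon$ result to varying $\varepsilon_n$ without comment. Without a uniform bound, (i) is actually false. Take $d=1$, $T=I$, $f(x)=x^2/2$, $\tau_n=0$, $\alpha_n=(n+1)^{-2}$, $\xv_0=0$, and use the subgradient $\pm(n+1)$, with the sign chosen so that $\xv_{n+1}=\xv_n\mp\frac{1}{n+1}$ stays in $[-1,1]$. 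This choice lies in $\partial_{\varepsilon_n}f(\xv_n)=[\xv_n-\sqrt{2\varepsilon_n},\,\xv_n+\sqrt{2\varepsilon_n}]$ for $\varepsilon_n=(n+2)^2/2$, so the iterates stay bounded while the subgradients grow without bound. Hence your extra hypothesis, together with your elementary bound $\|\gv\|\le 2\sup_{\|\uv\|\le R+1}|f(\uv)|+\bar\varepsilon$, is exactly what is needed.
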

\begin{proof}
Let $\xv^*\in \fix T$ be fixed.
Then for any $n\in \mathbb{N}_0$, we get from the nonexpansiveness of $T$ and Assumption \ref{A: xn epsilon bbd} that
$\|T\xv_n\| = \|T\xv_n-\xv^* + \xv^*\|\leq \|\xv_n-\xv^*\|+\|\xv^*\|\leq \|\xv_n\|+2\|\xv^*\|= M_2 + 2\|\xv^*\|,$
which shows that the sequence $\{T\xv_n\}_{n=0}^\infty$ is bounded.
To prove (i), the boundedness of $\{\xv_n\}_{n=0}^\infty$ and $\{T\xv_n\}_{n=0}^\infty$ implies that $\{\widetilde{\nabla}_{\varepsilon_n'}f(\xv_n)\}_{n=0}^\infty$ and $\{\widetilde{\nabla}_{\varepsilon_n}f(T\xv_n)\}_{n=0}^\infty$ are also bounded from \cite[Theorem 2.4.13]{Z-02}, where $\widetilde{\nabla}_{\varepsilon_n'}f(\xv_n)$ is a $\varepsilon_n'$-subgradient of $f$ at $\xv_n$.
Thus, there exists a constant $C_2>0$ such that 
$\max\{ \|\widetilde{\nabla}_{\varepsilon_n'}f(\xv_n)\|, \|\widetilde{\nabla}_{\varepsilon_n} f(T\xv_n)\| \leq C_2$ for all $n\in \mathbb{N}_0$.
To prove (ii), we have that the sequences $\{\xv_n\}_{n=0}^\infty$ and $\{T\xv_n\}_{n=0}^\infty$ are bounded. Consequently, the set $\{\xv_n, T\xv_n : n\in \mathbb{N}_0\}$ is also bounded.
According to Fact \ref{f-Lip}(ii), $f$ is Lipschitz continuous over the set $\{\xv_n, T\xv_n : n\in \mathbb{N}_0\}$.
This implies that there exists a constant $L>0$ such that
$f(\xv)-f(\yv)\leq L\|\xv-\yv\|$ for all $\xv,\yv \in \{\xv_n, T\xv_n: n\in \mathbb{N}_0\}$.
\end{proof}

\begin{lemma}\label{mainlemma-en}
Let  $\{\xv_n\}_{n=0}^\infty$ be a sequence generated by Algorithm \ref{our AI 2} and $a\in (0,1)$.  
Suppose that Assumptions \ref{assumption_tau} and \ref{A: xn epsilon bbd} hold.
Then, for all $n\in\mathbb{N}_0$ and  $ \xv^* \in \fix T,$ we have
\begin{align*}
      2\alpha_n(f(\xv_{n+1})-f(\xv^*)) 
    \leq& \|\xv_n-\xv^*\|^2 - \|\xv_{n+1}-\xv^*\|^2 +2\alpha_n\varepsilon_{n-\tau_n}  + 2LC_2\alpha_n^2 + (16L^2+24C_2^2)\alpha_n^{2-a}  \\
    &+\left(\frac{3\alpha_n^a}{8}-1\right)\|\xv_{n+1}-\xv_n\|^2 + \frac{2(\tau+1)\alpha_n^a}{8}\sum_{i=0}^\tau\|\xv_{n-i+1}-\xv_{n-i}\|^2,
\end{align*}
where $C_2$ and $L$ are given in Proposition \ref{LL-ep1}.
\end{lemma}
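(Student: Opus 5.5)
We follow the proof of Lemma \ref{mainlemma} line by line. The only changes are that subgradients at $\xv_{n+1}$, $T\xv_n$ and $T\xv_{n+1}$ are replaced by Lipschitz estimates from Proposition \ref{LL-ep1}(ii), and the subgradient inequality at $T\xv_{n-\tau_n}$ is replaced by the $\varepsilon$-subgradient inequality. Write $\gv_n:=\widetilde{\nabla}_{\varepsilon_{n-\tau_n}}f(T\xv_{n-\tau_n})$. By Proposition \ref{LL-ep1}(i), $\|\gv_n\|\le C_2$; this uses the index $n-\tau_n\ge 0$, or the convention $\xv_{-i}=\xv_0$.

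First, the cutter property (Fact \ref{fne-fact}(2)) and $T\xv_n=\xv_{n+1}+\alpha_n\gv_n$ give the analogue of \eqref{L2}:
\[
2\langle \xv_n-\xv_{n+1},\xv^*-\xv_{n+1}\rangle\le 2\alpha_n\langle \gv_n,\xv^*-T\xv_n\rangle+2\alpha_n\langle \gv_n,\xv_n-\xv_{n+1}\rangle .
\]
Young's inequality, as in \eqref{L3}--\eqref{L4}, bounds the last term by $8\alpha_n^{2-a}C_2^2+\frac{\alpha_n^a}{8}\|\xv_{n+1}-\xv_n\|^2$.

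Next, split $\langle \gv_n,\xv^*-T\xv_n\rangle=\langle \gv_n,\xv^*-T\xv_{n-\tau_n}\rangle+\langle \gv_n,T\xv_{n-\tau_n}-T\xv_n\rangle$. The $\varepsilon$-subgradient definition bounds the first piece by $f(\xv^*)-f(T\xv_{n-\tau_n})+\varepsilon_{n-\tau_n}$, which produces the term $2\alpha_n\varepsilon_{n-\tau_n}$. We then telescope
\[
f(\xv^*)-f(T\xv_{n-\tau_n})=f(\xv^*)-f(\xv_{n+1})+[f(\xv_{n+1})-f(T\xv_n)]+[f(T\xv_n)-f(T\xv_{n+1})]+[f(T\xv_{n+1})-f(T\xv_{n-\tau_n})],
\]
exactly as in \eqref{L5}, and bound each bracket with Proposition \ref{LL-ep1}(ii).

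For the first bracket, $\|\xv_{n+1}-T\xv_n\|=\alpha_n\|\gv_n\|\le \alpha_nC_2$, so $2\alpha_n L\alpha_nC_2=2LC_2\alpha_n^2$.

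For the second and third brackets, nonexpansiveness gives $2\alpha_nL\|\xv_n-\xv_{n+1}\|$ and $2\alpha_nL\|\xv_{n+1}-\xv_{n-\tau_n}\|$. Young's inequality with the same weights as in \eqref{L6}--\eqref{L7} turns each into $8L^2\alpha_n^{2-a}+\frac{\alpha_n^a}{8}\|\cdot\|^2$, for a total of $16L^2\alpha_n^{2-a}$.

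The remaining inner product $2\alpha_n\langle\gv_n,T\xv_{n-\tau_n}-T\xv_n\rangle$ is handled as in \eqref{L8}. It gives $16C_2^2\alpha_n^{2-a}+\frac{\alpha_n^a}{8}\|\xv_n-\xv_{n+1}\|^2+\frac{\alpha_n^a}{8}\|\xv_{n+1}-\xv_{n-\tau_n}\|^2$. Together with the earlier $8C_2^2$ term this yields $24C_2^2\alpha_n^{2-a}$.

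Finally, apply \eqref{L10-1} to $\|\xv_{n+1}-\xv_{n-\tau_n}\|^2$, which appears with total coefficient $\frac{2\alpha_n^a}{8}$. Then use the identity \eqref{L11} on the left-hand side and collect terms. The coefficients of $\|\xv_{n+1}-\xv_n\|^2$ from the Young steps sum to $\frac{3\alpha_n^a}{8}$, and together with the $-1$ from \eqref{L11} this gives the stated inequality.

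The main obstacle is bookkeeping rather than any conceptual step. One must check that every point where the Lipschitz estimate is applied lies in $\{\xv_n,T\xv_n\}$, which is why we telescope through $\xv_{n+1}$ and $T\xv_{n+1}$. One must also confirm that the Young constants reproduce exactly $16L^2+24C_2^2$ and the coefficients $\frac{3}{8}$ and $\frac{2(\tau+1)}{8}$. If a constant turns out slightly larger, for example $24C_2^2$ versus $16C_2^2$, we would re-weight the Young splits, since the structure of the bound does not depend on these particular values.
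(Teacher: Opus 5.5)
Your proposal is correct and follows the paper's proof essentially step for step. It uses the same cutter-based inequality, the same split through $T\xv_{n-\tau_n}$ with the $\varepsilon$-subgradient inequality, the same four-term telescoping through $\xv_{n+1}$, $T\xv_n$, $T\xv_{n+1}$ bounded via Proposition~\ref{LL-ep1}(ii), Young's inequality with weight $8$, and \eqref{L10-1}. Your bookkeeping already matches the statement exactly ($8C_2^2+16C_2^2=24C_2^2$, $8L^2+8L^2=16L^2$, three contributions of $\frac{\alpha_n^a}{8}$ to $\|\xv_{n+1}-\xv_n\|^2$ and two to $\|\xv_{n+1}-\xv_{n-\tau_n}\|^2$), so the re-weighting contingency in your last paragraph is not needed.
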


\begin{proof}
Let $n\in\mathbb{N}_0$  and $\xv^*\in \fix T$.
Following the idea in Lemma \ref{mainlemma} together with Proposition \ref{LL-d1}(i), we have that
\begin{align}\label{L2en}
    \|\xv_n-\xv^*\|^2
    \leq& \|\xv_{n+1}-\xv^*\|^2 - \|\xv_{n+1}-\xv_n\|^2 + 24\alpha_n^{2-a}\| \widetilde{\nabla}f(T\xv_{n-\tau_n})\|^2  +\frac{2\alpha_n^a}{8}\|\xv_{n+1}-\xv_n\|^2\nonumber\\
    &  + \frac{\alpha_n^a}{8}\|\xv_{n+1}-\xv_{n-\tau
    _n}\|^2  + 2\alpha_n \langle \widetilde{\nabla}_{\varepsilon_{n-\tau_n}}f(T\xv_{n-\tau_n}), \xv^* - T\xv_{n-\tau_n} \rangle \nonumber\\
    \leq& \|\xv_{n+1}-\xv^*\|^2 - \|\xv_{n+1}-\xv_n\|^2 + 24\alpha_n^{2-a}C_2^2  +\frac{2\alpha_n^a}{8}\|\xv_{n+1}-\xv_n\|^2 \nonumber\\
    &  + \frac{\alpha_n^a}{8}\|\xv_{n+1}-\xv_{n-\tau
    _n}\|^2  + 2\alpha_n \langle \widetilde{\nabla}_{\varepsilon_{n-\tau_n}}f(T\xv_{n-\tau_n}), \xv^* - T\xv_{n-\tau_n} \rangle
\end{align}
We note from the definition of approximate subgradient of $f$ at $T\xv_{n-\tau_n}$ that
{\small \begin{align}\label{L5en}
2\alpha_n \langle \widetilde{\nabla}_{\varepsilon_{n-\tau_n}}f(T\xv_{n-\tau_n}),\xv^*-T\xv_{n-\tau_n} \rangle
\leq& 2\alpha_n(f(\xv^*) - f(\xv_{n+1})) +2\alpha_n\varepsilon_{n-\tau_n}+ 2\alpha_n(f(\xv_{n+1})-f(T\xv_n)) \nonumber\\
&+ 2\alpha_n(f(T\xv_n)-f(T\xv_{n+1}))  + 2\alpha_n(f(T\xv_{n+1}) - f(T\xv_{n-\tau_n})). 
\end{align}  }
From Proposition \ref{LL-ep1}, there exists $L>0$ such that
\begin{align}\label{L8en-0}
f(\xv_{n+1})-f(T\xv_n) 
\leq L\|\xv_{n+1}-T\xv_n\| 
= L\alpha_n\|\widetilde{\nabla}f_{\varepsilon_{n-\tau_n}}(T\xv_{n-\tau_n})\| \leq LC_2\alpha_n.
\end{align}
Invoking Young's inequality, we obtain
\begin{align*}
    8L\alpha_n^a\|\xv_n-\xv_{n+1}\|=8L\alpha_n^{\frac{2-a}{2}}\alpha_n^\frac{a}{2}\|\xv_n-\xv_{n+1}\|
    \leq \frac{64L^2\alpha_n^{2-a}}{2} + \frac{\alpha_n^2}{2}\|\xv_n-\xv_{n-\tau_n}\|^2,
\end{align*}
which is 
\begin{align*}
    2L\alpha_n\|\xv_n-\xv_{n+1}\| \leq 8L^2\alpha_n^{2-a}+\frac{\alpha_n^a}{8}\|\xv_n-\xv_{n+1}\|^2.
\end{align*}
Similarly, we get
$2L\alpha_n\|\xv_{n+1}-\xv_{n-\tau_n}\|\leq 8L^2\alpha_n^{2-a}+\frac{\alpha_n^a}{8}\|\xv_{n+1}-\xv_{n-\tau_n}\|^2.$
Thus, using Proposition \ref{LL-ep1}(ii), the nonexpansiveness of $T$, and the above results, we obtain the following two inequalities:
\begin{align}\label{L6en}
2\alpha_n( f(T\xv_n)-f(T\xv_{n+1}))
\leq& 2L\alpha_n\|T\xv_n-T\xv_{n+1}\| \nonumber\\
\leq& 2L\alpha_n\|\xv_n-\xv_{n+1}\| 
\leq 8L^2\alpha_n^{2-a}+\frac{\alpha_n^a}{8}\|\xv_n-\xv_{n+1}\|^2
\end{align}
and
\begin{align}\label{L7en}
   2\alpha_n(f(T\xv_{n+1})-f(T\xv_{n-\tau_n})) 
   \leq& 2L\alpha_n\|T\xv_{n+1}-T\xv_{n-\tau}\| \nonumber\\
   \leq& 2L\alpha_n\|\xv_{n+1}-\xv_{n-\tau_n}\|
   \leq 8L^2\alpha_n^{2-a}+\frac{\alpha_n^a}{8}\|\xv_{n+1}-\xv_{n-\tau_n}\|^2.
\end{align}
By substituting \eqref{L8en-0}, \eqref{L6en}, \eqref{L7en} into \eqref{L5en}, we get
\begin{align}\label{L8en}
    2\alpha_n\langle \widetilde{\nabla}_{\varepsilon_{n-\tau_n}}f(T\xv_{n-\tau_n}),\xv^*-T\xv_{n-\tau_n}\rangle
    \leq& 2\alpha_n(f(\xv^*) - f(\xv_{n+1})) +2\alpha_n\varepsilon_{n-\tau_n} +2LC_2\alpha_n^2  +16L^2\alpha_n^{2-a}\nonumber\\ 
    &  +\frac{\alpha_n^a}{8}\|\xv_n-\xv_{n+1}\|^2  +\frac{\alpha_n^a}{8}\|\xv_{n+1}-\xv_{n-\tau_n}\|^2
\end{align}
Combining \eqref{L2en} and \eqref{L8en} leads to
\begin{align*}
    2\alpha_n(f(\xv_{n+1})-f(\xv^*))
    \leq& \|\xv_n-\xv^*\|^2 - \|\xv_{n+1}-\xv^*\|^2 +2\alpha_n\varepsilon_{n-\tau_n} + 2LC_2\alpha_n^2 + 16L^2\alpha_n^{2-a}+24\alpha_n^{2-a}C_2^2  \\
    &+\left(\frac{3\alpha_n^a}{8}-1\right)\|\xv_{n+1}-\xv_n\|^2 + \frac{2\alpha_n^a}{8}\|\xv_{n+1}-\xv_{n-\tau_n}\|^2 \nonumber\\
    \leq& \|\xv_n-\xv^*\|^2 - \|\xv_{n+1}-\xv^*\|^2 +2\alpha_n\varepsilon_{n-\tau_n}  + 2LC_2\alpha_n^2 + (16L^2+24C_2^2)\alpha_n^{2-a}  \\
    &+\left(\frac{3\alpha_n^a}{8}-1\right)\|\xv_{n+1}-\xv_n\|^2 + \frac{2(\tau+1)\alpha_n^a}{8}\sum_{i=0}^\tau\|\xv_{n-i+1}-\xv_{n-i}\|^2.
\end{align*}
This completes the proof. 
\end{proof}

\begin{theorem}\label{thm:rate epsilon}
Let  $\{\xv_n\}_{n=0}^\infty$ be a sequence generated by Algorithm \ref{our AI 2} and $a\in (0,1)$.  
Suppose that Assumptions \ref{assumption_tau} and \ref{A: xn epsilon bbd} hold and the sequence $\{\alpha_n\}_{n=0}^\infty\subset (0,\infty)$ is nonincreasing such that  $(3+2(\tau+1)^2)\alpha_0^a < 8$. 
Then, for all $N\in\mathbb{N}_0$ and  $ \xv^* \in \mathcal{S}$, we have
{\small \begin{align*}
    \min_{0\leq n \leq N}f(\xv_{n+1})-f^* 
   \leq\frac{\Vert \xv_0-\xv^*\Vert^2 +2LC_2\sum_{n=0}^N\alpha_n^2 + (16L^2+24C_2^2)\sum_{n=0}^N\alpha_n^{2-a} + 2\sum_{n=0}^N \alpha_n \varepsilon_{n-\tau_n}}{2\sum_{n=0}^N \alpha_n},
\end{align*}  }  
where $C_2$ and $L$ are given in Proposition  \ref{LL-ep1}.
\end{theorem}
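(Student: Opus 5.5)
The proof will mirror that of Theorem \ref{thm:rate}, with Lemma \ref{mainlemma-en} in place of Lemma \ref{mainlemma}.

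\textbf{Step 1 (per-iteration bound).} Fix $\xv^*\in\mathcal{S}\subset\fix T$, so that $f(\xv^*)=f^*$. Lemma \ref{mainlemma-en} then gives, for every $n\in\N_0$,
\begin{align*}
2\alpha_n(f(\xv_{n+1})-f^*)\leq{}&\|\xv_n-\xv^*\|^2-\|\xv_{n+1}-\xv^*\|^2+2\alpha_n\varepsilon_{n-\tau_n}+2LC_2\alpha_n^2\\
&+(16L^2+24C_2^2)\alpha_n^{2-a}+\Big(\frac{3\alpha_n^a}{8}-1\Big)\|\xv_{n+1}-\xv_n\|^2\\
&+\frac{2(\tau+1)\alpha_n^a}{8}\sum_{i=0}^\tau\|\xv_{n-i+1}-\xv_{n-i}\|^2 .
\end{align*}
Here $C_2$ and $L$ are the constants of Proposition \ref{LL-ep1}; Assumption \ref{A: xn epsilon bbd} guarantees they exist.

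\textbf{Step 2 (sum and telescope).} Sum this inequality from $n=0$ to $N$. The distance terms telescope, and we drop $-\|\xv_{N+1}-\xv^*\|^2$.

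\textbf{Step 3 (the double sum).} This is the only nontrivial step, and it is handled exactly as in \eqref{TT3}. Interchange the two summations and shift indices. Since $\xv_0=\xv_{-1}=\cdots=\xv_{-\tau}$, all differences with negative index vanish. Since $\{\alpha_n\}$ is nonincreasing, $\alpha_{n+i}^a\le\alpha_n^a$. Together these give
\[\sum_{n=0}^N\alpha_n^a\sum_{i=0}^\tau\|\xv_{n-i+1}-\xv_{n-i}\|^2\le(\tau+1)\sum_{n=0}^N\alpha_n^a\|\xv_{n+1}-\xv_n\|^2 .\]

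\textbf{Step 4 (absorb the difference terms).} After Step 3, the coefficient of $\|\xv_{n+1}-\xv_n\|^2$ in the summed inequality is
\[\frac{(3+2(\tau+1)^2)\alpha_n^a}{8}-1 .\]
Because $\alpha_n\le\alpha_0$ and $(3+2(\tau+1)^2)\alpha_0^a<8$, this coefficient is negative, so these terms can be dropped. What remains is
\[2\sum_{n=0}^N\alpha_n(f(\xv_{n+1})-f^*)\le\|\xv_0-\xv^*\|^2+2LC_2\sum_{n=0}^N\alpha_n^2+(16L^2+24C_2^2)\sum_{n=0}^N\alpha_n^{2-a}+2\sum_{n=0}^N\alpha_n\varepsilon_{n-\tau_n}.\]

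\textbf{Step 5 (conclude).} Bound the left-hand side from below by $2\big(\min_{0\le n\le N}f(\xv_{n+1})-f^*\big)\sum_{n=0}^N\alpha_n$, and divide by $2\sum_{n=0}^N\alpha_n>0$. This yields the claimed estimate.

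No real obstacle is expected beyond the index bookkeeping in Step 3.
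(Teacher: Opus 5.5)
Your proposal is correct and matches the argument the paper intends. The paper omits this proof, saying it follows in the same manner as Theorem \ref{thm:rate}, and your Steps 1--5 reproduce that proof with Lemma \ref{mainlemma-en} in place of Lemma \ref{mainlemma}, including the index-shift estimate \eqref{TT3} and dropping the difference terms via $(3+2(\tau+1)^2)\alpha_0^a<8$.
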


\vspace{-0.2cm}

Next, we state a lemma which provides the necessary tool for proving the convergence of the iterations.
Subsequently, we state the main theorem of this subsection. The proofs of them are  directly obtained by following the line proofs of Lemma \ref{LL}, Theorem \ref{thm:con}, and Corollary \ref{cor:main}, respectively. Thus, we will omit the proofs here.

\vspace{-0.2cm}
\begin{lemma}
Let  $\{\xv_n\}_{n=0}^\infty$ be a sequence generated by Algorithm \ref{our AI 2}.
Suppose that Assumptions \ref{assumption_tau} and \ref{A: xn epsilon bbd} hold.
Then, for all $n\in \mathbb{N}_0$ and $\xv^*\in \fix T$, we have
    \begin{align*}
        \|\xv_{n+1}-\xv^*\|^2
        \leq&
        \|\xv_n-\xv^*\|^2 - \|\xv_{n+1}-\xv_n\|^2 +2\alpha_n C_2\|\xv_{n-\tau_n}-\xv_n\| \\
        &+2\alpha_nC_2\|\xv_{n+1}-\xv_n\| 
         + 2\alpha_n(f(\xv^*)-f(T\xv_{n-\tau_n}))+ 2\alpha_n\varepsilon_{n-\tau_n},
    \end{align*}   
    where $C_2$ is given in Proposition \ref{LL-ep1}.
\end{lemma}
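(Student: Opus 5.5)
We will follow the proof of Lemma \ref{LL} almost verbatim, replacing exact subgradients by $\varepsilon$-subgradients and the constant $C$ by $C_2$ from Proposition \ref{LL-ep1}(i). Fix $n\in\mathbb{N}_0$ and $\xv^*\in\fix T$, and abbreviate $\gv_n:=\widetilde{\nabla}_{\varepsilon_{n-\tau_n}}f(T\xv_{n-\tau_n})$.

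\emph{Step 1: the basic inequality.} Since $T$ is firmly nonexpansive with $\fix T\neq\emptyset$, it is a cutter by Fact \ref{fne-fact}(2), so $\langle \xv_n-T\xv_n,\xv^*-T\xv_n\rangle\le 0$. Substituting $T\xv_n=\xv_{n+1}+\alpha_n\gv_n$ and splitting $\xv^*-T\xv_n=(\xv^*-\xv_{n+1})+(\xv_{n+1}-T\xv_n)$ gives the analogue of \eqref{LL1}:
\[
2\langle \xv_n-\xv_{n+1},\xv^*-\xv_{n+1}\rangle\le 2\alpha_n\langle \gv_n,\xv^*-T\xv_n\rangle+2\alpha_n\langle \gv_n,\xv_n-\xv_{n+1}\rangle .
\]
This step uses only the update rule, so the $\varepsilon$-subgradient changes nothing here.

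\emph{Step 2: bounding the two inner products.} Write $\xv^*-T\xv_n=(\xv^*-T\xv_{n-\tau_n})+(T\xv_{n-\tau_n}-T\xv_n)$. For the first piece we use the definition of the $\varepsilon_{n-\tau_n}$-subdifferential with $\yv=\xv^*$, which gives
\[
\langle\gv_n,\xv^*-T\xv_{n-\tau_n}\rangle\le f(\xv^*)-f(T\xv_{n-\tau_n})+\varepsilon_{n-\tau_n}.
\]
This is the only place where the extra term $2\alpha_n\varepsilon_{n-\tau_n}$ enters. For the second piece, Cauchy--Schwarz, the nonexpansiveness of $T$ from Fact \ref{fne-fact}(1), and $\|\gv_n\|\le C_2$ give the bound $C_2\|\xv_{n-\tau_n}-\xv_n\|$. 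The remaining term $2\alpha_n\langle\gv_n,\xv_n-\xv_{n+1}\rangle$ is at most $2\alpha_nC_2\|\xv_{n+1}-\xv_n\|$.

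\emph{Step 3: conclusion.} Apply the identity \eqref{LL4} to the left-hand side, substitute the bounds from Step 2, and rearrange. This yields exactly the claimed inequality.

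\emph{Main obstacle.} The main point to check is that Proposition \ref{LL-ep1}(i) really bounds $\gv_n$. That proposition bounds $\widetilde{\nabla}_{\varepsilon_m}f(T\xv_m)$, and taking $m=n-\tau_n\ge -\tau$ gives the bound for $\gv_n$. Two details need care. First, the negative indices are harmless because $\xv_{-i}=\xv_0$. Second, the bound from \cite[Theorem 2.4.13]{Z-02} is uniform only if $\{\varepsilon_n\}$ is bounded. So we will note that bounded $\varepsilon_n$ is implicitly assumed through $C_2$, or simply take $C_2$ as given, which is how the paper's Proposition \ref{LL-ep1} is used.
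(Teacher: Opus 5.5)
Your proposal is correct and follows the route the paper intends. The paper omits this proof, saying it follows Lemma \ref{LL} line by line, and you do exactly that: the cutter inequality, the split of $\xv^*-T\xv_n$, the $\varepsilon$-subgradient inequality at $\xv^*$ as the only source of $2\alpha_n\varepsilon_{n-\tau_n}$, and the three-point identity \eqref{LL4}. Your caveat is also justified: the uniform bound $C_2$ in Proposition \ref{LL-ep1} needs $\{\varepsilon_n\}$ to be bounded, as $f(\xv)=\|\xv\|^2$ already shows, and it needs some convention for $\varepsilon_m$ with $m<0$; this is a gap in the paper's proposition, not in your argument.
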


\vspace{-0.2cm}

\begin{theorem}\label{thm:con epsilon}
Let  $\{\xv_n\}_{n=0}^\infty$ be a sequence generated by Algorithm \ref{our AI 2} and $a\in (0,1)$.  
Suppose that Assumptions \ref{assumption_tau}, \ref{A: xn epsilon bbd} hold and assume that $\mathcal{S}\neq\emptyset$. 
Suppose that  the sequence $\{\alpha_n\}_{n=0}^\infty\subset (0,\infty)$ satisfies $\lim_{n\to \infty}\alpha_n=0$ and $\sum_{n=0}^\infty\alpha_n =\infty$ 
 and the sequence $\{\varepsilon_n\}_{n=0}^\infty \subset [0,\infty)$ satisfies $\lim_{n\to \infty}\varepsilon_{n-\tau_n}=0$.
  Then, there exists a subsequence of $\{\xv_n\}_{n=0}^\infty$ that converges to an optimal solution in $\mathcal{S}$. Furthermore, if $f$ is strictly convex, then the sequence of $\{\xv_n\}_{n=0}^\infty$ converges to a unique optimal solution of the problem (\ref{main-pb}).
\end{theorem}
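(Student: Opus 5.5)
The plan is to repeat the proof of Theorem \ref{thm:con} line by line, starting from the inexact analogue of Lemma \ref{LL} stated just above. That lemma carries one extra term, $2\alpha_n\varepsilon_{n-\tau_n}$. Fix $\xv^*\in\mathcal{S}$. By Assumption \ref{A: xn epsilon bbd} and Proposition \ref{LL-ep1}, the differences $\|\xv_{n+1}-\xv_n\|$ and $\|\xv_{n-\tau_n}-\xv_n\|$ are bounded by $2M_2$. We also get $f(\xv^*)-f(T\xv_{n-\tau_n})\le L'(\|\xv^*\|+M_2)$ for a Lipschitz constant $L'$ of $f$ on a bounded set containing $\xv^*$ and $\{T\xv_n\}$. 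Since $\varepsilon_{n-\tau_n}\to 0$, the sequence $\{\varepsilon_{n-\tau_n}\}$ is bounded, say by $E$. This gives the analogue of \eqref{tt1}:
\begin{align*}
\|\xv_{n+1}-\xv_n\|^2\le \|\xv_n-\xv^*\|^2-\|\xv_{n+1}-\xv^*\|^2+\alpha_n\big(8C_2M_2+2L'(\|\xv^*\|+M_2)+2E\big).
\end{align*}
We also have $\|T\xv_n-\xv_n\|\le \alpha_nC_2+\|\xv_{n+1}-\xv_n\|$, because the step uses $\|\widetilde{\nabla}_{\varepsilon_{n-\tau_n}}f(T\xv_{n-\tau_n})\|\le C_2$ by Proposition \ref{LL-ep1}(i).

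We then split into the same two cases. In \textbf{Case 1}, $\|\xv_n-\xv^*\|$ is eventually nonincreasing. The displayed inequality gives $\|\xv_{n+1}-\xv_n\|\to0$, hence $\|T\xv_n-\xv_n\|\to0$ and $\|\xv_{n-\tau_n}-\xv_n\|\to0$ by the telescoping bound with $\tau$ terms. Using \eqref{tt6}, we set
$M_n:=f(\xv_n)-f(\xv^*)-(C_2+L')\|\xv_n-\xv_{n-\tau_n}\|-C_2\|\xv_{n+1}-\xv_n\|-L'\|T\xv_n-\xv_n\|-\varepsilon_{n-\tau_n}$.
Then $\sum\alpha_nM_n\le\|\xv_0-\xv^*\|^2/2$. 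With $\sum\alpha_n=\infty$ this forces $\liminf M_n\le0$. Because every subtracted term tends to $0$, including $\varepsilon_{n-\tau_n}$, we obtain $\liminf f(\xv_n)\le f^*$. Extract a convergent subsequence along which $f$ attains this liminf. Demiclosedness (Fact \ref{fne-fact}(4)) puts the limit in $\fix T$, and continuity of $f$ puts it in $\mathcal{S}$. In \textbf{Case 2}, we use $\mu(n)$ from Lemma \ref{prop1}. At these indices the difference $\|\xv_{\mu(n)}-\xv^*\|^2-\|\xv_{\mu(n)+1}-\xv^*\|^2$ is nonpositive. This gives the three vanishing differences along $\mu(n)$. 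Dividing the lemma by $2\alpha_{\mu(n)}$ then gives $f(\xv_{\mu(n)})-f^*\le(\text{vanishing terms})+\varepsilon_{\mu(n)-\tau_{\mu(n)}}$, so $\limsup f(\xv_{\mu(n)})\le f^*$. A cluster point of $\{\xv_{\mu(n)}\}$ is again optimal.

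For strict convexity, $\mathcal{S}=\{\xv^*\}$. In Case 1, $\lim\|\xv_n-\xv^*\|$ exists and a subsequence converges to $\xv^*$, so the whole sequence converges. In Case 2, every cluster point of $\{\xv_{\mu(n)}\}$ equals $\xv^*$, so $\xv_{\mu(n)}\to\xv^*$. Then $\|\xv_{\mu(n)+1}-\xv_{\mu(n)}\|\to0$ gives $\xv_{\mu(n)+1}\to\xv^*$. Finally, $\|\xv_n-\xv^*\|\le\|\xv_{\mu(n)+1}-\xv^*\|$ from Lemma \ref{prop1} finishes, exactly as in Corollary \ref{cor:main}.

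The main obstacle is handling the $\varepsilon$-terms, which is where the hypothesis $\varepsilon_{n-\tau_n}\to0$ is used. Since the error enters multiplied by $\alpha_n$, the same factor as the function gap, it cannot be absorbed by summability. It has to be folded into $M_n$ and the Case 2 bound as a term that vanishes. A secondary point is that Proposition \ref{LL-ep1}(ii) only gives Lipschitz continuity on $\{\xv_n,T\xv_n\}$. We must therefore enlarge this to a bounded set containing $\xv^*$ via Fact \ref{f-Lip} before using \eqref{tt6}.
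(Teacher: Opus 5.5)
Your route is the one the paper intends: it omits this proof and points back to Theorem~\ref{thm:con} and Corollary~\ref{cor:main}. Your handling of the $\varepsilon$-terms and of Case~1 is sound. The gap is in Case~2, at ``This gives the three vanishing differences along $\mu(n)$.'' Plugging $\|\xv_{\mu(n)}-\xv^*\|<\|\xv_{\mu(n)+1}-\xv^*\|$ into your analogue of \eqref{tt1} controls only the step at index $\mu(n)$, namely $\|\xv_{\mu(n)+1}-\xv_{\mu(n)}\|$, and hence $\|T\xv_{\mu(n)}-\xv_{\mu(n)}\|$. The delayed term, however, satisfies $\|\xv_{\mu(n)}-\xv_{\mu(n)-\tau_{\mu(n)}}\|\le\sum_{i=1}^{\tau}\|\xv_{\mu(n)-i+1}-\xv_{\mu(n)-i}\|$, which involves the steps at indices $k=\mu(n)-1,\dots,\mu(n)-\tau$. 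At those indices the same inequality only gives $\|\xv_{k+1}-\xv_k\|^2\le\|\xv_k-\xv^*\|^2-\|\xv_{k+1}-\xv^*\|^2+K\alpha_k$, where the first difference has unknown sign and size, and Lemma~\ref{prop1} says nothing about them. Without this limit, dividing by $2\alpha_{\mu(n)}$ leaves the term $(C_2+L')\|\xv_{\mu(n)}-\xv_{\mu(n)-\tau_{\mu(n)}}\|$. So $\limsup f(\xv_{\mu(n)})\le f^*$ is not established, and with it both the Case~2 optimality of cluster points and the strict-convexity conclusion fall. The paper's own \eqref{tt9} is asserted ``by following the above lines'' in exactly this way, so you inherited the hole rather than created it, but it is still a hole.

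The missing ingredient is asymptotic regularity of the entire sequence. You can prove it before the case split, using only firm nonexpansiveness and boundedness.
\begin{itemize}
\item[(i)] Put $F:=\fix T$, $p_n:=P_F\xv_n$ and $d_n:=\|\xv_n-p_n\|$. Since $Tp_n=p_n$, firm nonexpansiveness gives $\|T\xv_n-p_n\|^2\le d_n^2-\|\xv_n-T\xv_n\|^2$.
\item[(ii)] Hence $d_{n+1}^2\le\|\xv_{n+1}-p_n\|^2\le d_n^2-\|\xv_n-T\xv_n\|^2+K\alpha_n$, where $K$ depends on $C_2$, $\sup_n d_n$ and $\sup_n\alpha_n$.
\item[(iii)] For $\rho>0$, compactness of the ball of radius $M_2$ makes $\delta_\rho:=\min\{\|\xv-T\xv\|:\|\xv\|\le M_2,\ d(\xv,F)\ge\rho\}$ positive whenever that set is nonempty.
\item[(iv)] So $d_{n+1}^2\le d_n^2-\delta_\rho^2+K\alpha_n$ whenever $d_n\ge\rho$, while $d_{n+1}^2\le d_n^2+K\alpha_n$ always. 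Since $\alpha_n\to0$, this forces $\limsup_n d_n\le\rho$, hence $d_n\to0$.
\item[(v)] Then $\|T\xv_n-\xv_n\|\le 2d_n\to0$ and $\|\xv_{n+1}-\xv_n\|\le\|T\xv_n-\xv_n\|+\alpha_nC_2\to0$ along the whole sequence.
\end{itemize}
Your telescoping bound then gives $\|\xv_n-\xv_{n-\tau_n}\|\to0$ for all $n$, in particular along $\mu(n)$. The rest of your argument then goes through unchanged, including the strict-convexity step, where you correctly pass from $\xv_{\mu(n)}\to\xv^*$ to $\xv_{\mu(n)+1}\to\xv^*$.
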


\vspace{-0.2cm}

\section{Distributed Version}\label{distributed}

In this section, we investigate a distributed version of Algorithm \ref{our AI} to deal with a particular situation of the problem (\ref{main-pb}) where the objective function is the sum of a finite number of convex functions and the constrained set is the intersection of a finite number of fixed-point sets:
\begin{align}\label{main-pb-sum}
	\begin{array}{ll}
		\textrm{minimize }\indent \displaystyle f(\xv):= \sum_{j=1}^m f_j(\xv)\\
		\textrm{subject to}\indent \xv\in \displaystyle \bigcap_{j=1}^m \fix T_j,
	\end{array}%
\end{align}	
where, for $j=1,\ldots,m$,  
$f_j:\mathbb{R}^d \to \mathbb{R}$ is a convex function, and
$T_j: \mathbb{R}^d \to \mathbb{R}^d$ is a firmly nonexpansive operator with $\bigcap_{i=j}^m \fix T_j \neq \emptyset$. In view of the network system of $m$ individual workers and a central server, we assume that each worker can communicate only with the central server.
Actually, this problem setting can be form as the problem (\ref{main-pb}) since the sum of convex functions  $f_j$ is also convex. Moreover, one can define the operator $T:=\frac{1}{m}\sum_{j=1}^m T_j$ which is also a firmly nonexpansive operator with $\fix T =  \bigcap_{i=j}^m \fix T_j$ \cite[Corollary 2.2.20]{C-12}. However, the computations of a current estimate which is relating to the operator $T$ and the subgradients of the function $f$ can indeed be computationally intensive, especially when the problem involves high-dimensional data $d$ or a large number of workers $m$. 
In this situation, the distributed scheme may offer an efficient approach to tackling such drawbacks by allowing  each individual worker compute the iterates $\xv_{n,j}$ by using the current iterate $\xv_n$ or even the staled iterate which are received from the central server and their own information $T_j$ and $f_j$. Subsequently, each worker sends their update iterate $\xv_{n,j}$ to the central server for updating the next iterate $\xv_{n+1}$. The formal algorithm will be presented shortly (see Algorithm \ref{our AI 3}).

 We now suppose that the problem (\ref{main-pb-sum}) has optimal solutions and then denoted by $\mathcal{S}_m^{*}$ the set of all optimal solutions.
It is akin to the above presented results, we will start by the boundedness assumption on the delayed sequences. 
\begin{assumption}\label{assumption_tau:Dis}
For each worker $j=1,\ldots, m$, the delay sequence $\{\tau_n^j\}_{n=0}^\infty$ is bounded, i.e., 
 there exists a nonnegative integer $\tau^j_{\max}$ such that
$0 \leq \tau_n^j \leq \tau^j \,\,\text{ for all}\,\, n\in \mathbb{N}_0$.
\end{assumption}

For simplicity, we set $ \tau_{\max} := \max_{1 \leq j \leq m} \tau^j,$
so  that
$$0 \leq \tau_n^j \leq \tau_{\max} \,\,\text{ for all}\,\, n\in \mathbb{N}_0, j=1, \ldots, m.$$

Now, we are in a position to present the distributed fixed-point delayed subgradient method for solving the problem (\ref{main-pb-sum}).

\vskip2mm
\begin{algorithm}[H]
	\SetAlgoLined
	\vskip2mm
	\textbf{Initialization}: Given a stepsize $\{\alpha_n\}_{n=0}^\infty \subset (0,\infty)$ and
     initial points $\xv_0,\xv_{-1},\cdots,\xv_{-\tau_{\max}}\in \mathbb{R}^d.$
	
	\textbf{Iterative Step}: The central server transmit a current point $\xv_n\in \mathbb{R}^d$ to all workers. Each worker $j\in\{1,\ldots,m\}$  computes
    $$
        \xv_{n,j}:= T_j\xv_n-\alpha_n {\widetilde{\nabla}} f_j(T_j\xv_{n-\tau_n^j}),
    $$
where $\widetilde{\nabla}f_j(T_j \xv_{n-\tau_n^j})$ is a subgradient of $f_j$ at $T_j\xv_{n-\tau_n^j}$ and send $\xv_{n,j}$ to the central server. 

The central server updates the next iteration $\xv_{n+1}$ as
    $$
        \xv_{n+1} := \frac{1}{m}\sum_{j=1}^m \xv_{n,j}.
    $$
	{\bf Update} $n:=n+1$.
	\caption{\bf  Distributed Fixed-Point Delayed Subgradient Method}
	\label{our AI 3}
		\vskip2mm
\end{algorithm}
\vskip2mm

\begin{remark}
    If the delays $\tau_n^j = 0$ for all $n\in \mathbb{N}_0, j=1,\ldots,m$, Algorithm \ref{our AI 3} reduces to Algorithm 3.1 of Iiduka \cite{I-16}. It should be clearly state that the assumption of the operators $T_j$ considered in Iiduka's work \cite{I-16} was assumed to be quasi-nonexpansive, whereas, according to the proving requirements, we need to assume here that the firm nonexpansivity of each $T_j$.
\end{remark}

In the same manner as the above results, we provide the following technical lemma. 
The proof of this lemma follows the same lines as Lemma \ref{mainlemma}, with slight modifications by proceed under the boundedness assumption of the sequence $\{\xv_{n,j}\}_{n=0}^\infty$ (for $j=1, \ldots, m$) in order to invoking the Lipschitz continuity of $f$.
In detail, we utilize the Lipschitz continuity of $f$ to derive the sequence of the best achieved function values $\left\{ \min_{0\leq n \leq N} f(\xv_n) \right\}_{N=0}^\infty$.

\begin{assumption}\label{A: xn bbd:Dis}
    Let $\{\xv_n\}_{n=0}^\infty$ be a sequence generated by Algorithm \ref{our AI 3}.
    For each $j=1,\ldots,m$, there exists  $M_j>0$ such that $\| \xv_{n,j}\| \leq M_j$ for all $n\in \mathbb{N}_0$.
\end{assumption}

 Again, by letting $\displaystyle M_{\max} := \max_{1\leq j\leq m} M_j$, we obtain that 
    $\| \xv_{n,j} \| \leq M_{\max}$ for all $n\in \mathbb{N}_0, j=1, \ldots, m.$
    Moreover, we also obtain the boundedness of the sequence $\{\xv_n\}_{n=0}^\infty$ through the fact that
    $\| \xv_{n+1}\| = \left\| \frac{1}{m}\sum_{j=1}^m \xv_{n,j} \right\| \leq M_{\max}$ for all $n\in \mathbb{N}_0$.

Assumption \ref{A: xn bbd:Dis} leads to the next results.

\begin{proposition}\label{LL-d1}
    Let $\{\xv_n\}_{n=0}^\infty$ be a sequence generated by Algorithm \ref{our AI 3}.
    Suppose that Assumption \ref{A: xn bbd:Dis} holds. 
    Then for each $j=1, \ldots, m$, 
    \begin{enumerate}
        \item[(i)] there exists $C_j>0$ such that 
        $$\max\{ \|\widetilde{\nabla}f_j(\xv_n)\|, \|\widetilde{\nabla}f_j(T_j\xv_n)\|, \|\widetilde{\nabla}f_j(\xv_{n,j})\|\} \leq C_j \quad \text{ for all } n\in \mathbb{N}_0.$$ 
        In particular, we denote $C_{\max} := \max_{1\leq j \leq m}C_j$;
        \item[(ii)] there exists $L_j>0$ such that
        $$f_j(\xv)-f_j(\yv) \leq L_j\|\xv-\yv\| \quad \text{ for all } \xv, \yv \in \{\xv_n, \xv_{n,j}: n \in \mathbb{N}_0\}.$$
        In particular, we denote $L_{\max} := \max_{1\leq j \leq m}L_j$.
    \end{enumerate}
\end{proposition}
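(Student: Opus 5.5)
The plan follows the proof of Proposition \ref{LL-ep1}: first show that all relevant points lie in one bounded set, then use the standard facts on bounded sets. Fix $j\in\{1,\ldots,m\}$ and a point $\xv^*\in\bigcap_{i=1}^m\fix T_i$, which exists by assumption. Assumption \ref{A: xn bbd:Dis} gives $\|\xv_{n,j}\|\le M_{\max}$. Since $\xv_{n+1}=\frac1m\sum_{j}\xv_{n,j}$, we also get $\|\xv_{n+1}\|\le M_{\max}$ for all $n\in\N_0$. Hence $\|\xv_n\|\le \max\{\|\xv_0\|,M_{\max}\}=:\widehat M$ for all $n$, and the same bound covers the initial points $\xv_{-1},\ldots,\xv_{-\tau_{\max}}$, since these equal $\xv_0$.

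Next, Fact \ref{fne-fact}(1) gives that $T_j$ is nonexpansive, and $\xv^*\in\fix T_j$. Therefore
\[
\|T_j\xv_n\|\le \|T_j\xv_n-\xv^*\|+\|\xv^*\|\le \|\xv_n-\xv^*\|+\|\xv^*\|\le \widehat M+2\|\xv^*\|.
\]
Consequently the set $X_j:=\{\xv_n,\,T_j\xv_n,\,\xv_{n,j}: n\in\N_0\}$ is nonempty and bounded. I would include $T_j\xv_n$ in $X_j$ even though part (ii) only names $\xv_n$ and $\xv_{n,j}$; a larger set costs nothing and is convenient for later use.

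For (i), I would apply Fact \ref{sg-bbd}(2) to $f_j$ on the bounded set $X_j$. Strictly, the bound needed is on the union $\bigcup_{\xv\in X_j}\partial f_j(\xv)$, which is the uniform form of that fact; this is the local boundedness of the subdifferential of a finite convex function on bounded sets, as in \cite[Proposition 16.20]{BC-11}, or \cite[Theorem 2.4.13]{Z-02} with $\varepsilon=0$. This yields $C_j>0$ that bounds every subgradient of $f_j$ at points of $X_j$. In particular it bounds $\|\widetilde{\nabla}f_j(\xv_n)\|$, $\|\widetilde{\nabla}f_j(T_j\xv_n)\|$ and $\|\widetilde{\nabla}f_j(\xv_{n,j})\|$, whichever subgradients are selected. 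Setting $C_{\max}:=\max_j C_j$ is then a finite maximum.

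For (ii), Fact \ref{f-Lip} applied to $f_j$ and the bounded set $X_j$ gives $L_j>0$ with $|f_j(\xv)-f_j(\yv)|\le L_j\|\xv-\yv\|$ on $X_j$. Since $\{\xv_n,\xv_{n,j}: n\in\N_0\}\subset X_j$, this implies the stated one-sided inequality, and $L_{\max}:=\max_j L_j$ is again a finite maximum.

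The only delicate point is the first step. Assumption \ref{A: xn bbd:Dis} bounds only the worker iterates, so boundedness of $\{\xv_n\}$ and of $\{T_j\xv_n\}$ must be derived from the averaging update, from nonexpansivity, and from the nonemptiness of $\bigcap_{j}\fix T_j$. Once that is settled, everything else is a direct citation of Facts \ref{sg-bbd} and \ref{f-Lip}, with the uniform version of Fact \ref{sg-bbd}(2) used as noted.
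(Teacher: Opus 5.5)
Your proof is correct and follows essentially the same route as the paper. You get boundedness of $\{\xv_n\}$ from the averaging update and of $\{T_j\xv_n\}$ from nonexpansivity and a fixed point of $T_j$, then apply Facts \ref{sg-bbd} and \ref{f-Lip} to the bounded set $\{\xv_n, T_j\xv_n, \xv_{n,j}: n\in\N_0\}$. Your observation that (i) needs the uniform bound on $\bigcup_{\xv\in X_j}\partial f_j(\xv)$, not the pointwise form of Fact \ref{sg-bbd}(2), is a point the paper passes over, so your version is slightly more careful.
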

\begin{proof}
    Let $j=1, \ldots, m$.
    We have from Assumption \ref{A: xn bbd:Dis} that the sequence
    $\{\xv_{n,j}\}_{n=0}^\infty$ and $\{\xv_n\}_{n=0}^\infty$ are bounded.
    Let $\xv^*\in \fix T_j$ be fixed.
    Since $T_j$ is nonexpansive, it follows that
    $\|T_j\xv_n\|
    = \|T_j\xv_n-\xv^*+\xv^*\|
    \leq \|\xv_n-\xv^*\|+\|\xv^*\|
    \leq \max\{M_{\max}, \|\xv_0\|\} + 2\|\xv^*\|
    $  for all $n\in \mathbb{N}_0$ and so $\{T_j\xv_n\}_{n=0}^\infty$ is also bounded.
To prove (i), since $\{\xv_n\}_{n=0}^\infty$, $\{T_j\xv_n\}_{n=0}^\infty$, and $\{\xv_{n,j}\}_{n=0}^\infty$ are bounded, we get from Fact \ref{sg-bbd}(ii) that $\{\widetilde{\nabla}f(\xv_n)\}_{n=0}^\infty,$ $\{\widetilde{\nabla}f(T_j\xv_n)\}_{n=0}^\infty$, and $\{\widetilde{\nabla}f(\xv_{n,j})\}_{n=0}^\infty$ are bounded. Thus, there exists a constant $C_j>0$ such that
$\max\{\|\widetilde{\nabla}f_j(\xv_n)\|, \|\widetilde{\nabla}f_j(T_j\xv_n)\|, \|\widetilde{\nabla}f_j(\xv_{n,j})\|\} \leq C_j$ for all  $n\in \mathbb{N}_0.$
To prove (ii), since  $\{\xv_n\}_{n=0}^\infty$, $\{T_j\xv_n\}_{n=0}^\infty$, and $\{\xv_{n,j}\}_{n=0}^\infty$ are bounded, we get that the set $\{\xv_n, T_j\xv_n, \xv_{n,j} : n\in \mathbb{N}_0\}$ is bounded.
This implies the Lipschitz continuity of  $f_j$ over the set $\{\xv_n, T_j\xv_n, \xv_{n,j} : n\in \mathbb{N}_0\}$ from Fact \ref{f-Lip}, i.e., there exists $L_j>0$ such that
$f_j(\xv)-f_j(\yv)\leq L_j\|\xv-\yv\|$  for all $\xv, \yv \in \{\xv_n, T_j\xv_n, \xv_{n,j} : n\in \mathbb{N}_0\}.$
\end{proof}    

\begin{lemma}\label{mainlemma:Dis}
Let  $\{\xv_n\}_{n=0}^\infty$ be a sequence generated by Algorithm \ref{our AI 3} and $a\in (0,1)$.  
Suppose that Assumption \ref{assumption_tau:Dis} holds.
Then, for all $n\in\mathbb{N}_0$ and  $ \xv^* \in \bigcap_{j=1}^m\fix T_j,$ we have
{\small \begin{align*}
    \frac{2\alpha_n}{m}(f(\xv_n)-f(\xv^*)) 
    \leq& \|\xv_n-\xv^*\|^2 - \| \xv_{n+1}-\xv^*\|^2 
    +\left(\frac{4\alpha_n^a}{8}-1\right)\frac{1}{m}\sum_{j=1}^m \|\xv_{n}-\xv_{n,j}\|^2    \nonumber \\
    & +2\alpha_n^2C_{\max}^2 + 40\alpha_n^{2-a}C_{\max}^2 +8L_{\max}^2\alpha_n^{2-a} 
    +\frac{2\alpha_n^a(\tau_{\max}+1)}{8}\sum_{i=0}^{\tau_{\max}} \Vert \xv_{n-i+1}-\xv_{n-i}\Vert^2,
\end{align*}  }
where $C_{\max}$ and $L_{\max}$ are given in Proposition \ref{LL-d1}.
\end{lemma}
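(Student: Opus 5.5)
We follow the template of Lemma \ref{mainlemma}, applied worker by worker and then averaged. We use the constants of Proposition \ref{LL-d1}, so Assumption \ref{A: xn bbd:Dis} is implicitly in force.

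\textbf{Per-worker inequality.} Fix $n$, $\xv^*\in\bigcap_j\fix T_j$ and $j$, and write $\gv_j:=\widetilde{\nabla}f_j(T_j\xv_{n-\tau_n^j})$, so that $T_j\xv_n=\xv_{n,j}+\alpha_n\gv_j$. By Fact \ref{fne-fact}(2), $T_j$ is a cutter, so $\langle \xv_n-T_j\xv_n,\xv^*-T_j\xv_n\rangle\le 0$. Substituting the expression for $T_j\xv_n$ gives, exactly as in \eqref{L2},
\[
2\langle \xv_n-\xv_{n,j},\xv^*-\xv_{n,j}\rangle\le 2\alpha_n\langle \gv_j,\xv^*-T_j\xv_n\rangle+2\alpha_n\langle \gv_j,\xv_n-\xv_{n,j}\rangle .
\]
The polarization identity turns the left-hand side into $\|\xv_n-\xv_{n,j}\|^2+\|\xv_{n,j}-\xv^*\|^2-\|\xv_n-\xv^*\|^2$.

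\textbf{Bounding the right-hand side.} The last term is handled by Young's inequality as in \eqref{L4}:
\[
2\alpha_n\langle \gv_j,\xv_n-\xv_{n,j}\rangle\le 8\alpha_n^{2-a}C_{\max}^2+\tfrac{\alpha_n^a}{8}\|\xv_n-\xv_{n,j}\|^2 .
\]
For the first term we insert intermediate points. By the subgradient inequality at $T_j\xv_{n-\tau_n^j}$,
\[
2\alpha_n\langle \gv_j,\xv^*-T_j\xv_{n-\tau_n^j}\rangle\le 2\alpha_n\big(f_j(\xv^*)-f_j(T_j\xv_{n-\tau_n^j})\big).
\]
We then write
\[
f_j(T_j\xv_{n-\tau_n^j})=f_j(\xv_n)+\big[f_j(T_j\xv_n)-f_j(\xv_n)\big]+\big[f_j(T_j\xv_{n-\tau_n^j})-f_j(T_j\xv_n)\big]
\]
and treat the two brackets as follows.
\begin{itemize}
\item Since the statement has $f(\xv_n)$ rather than $f(\xv_{n+1})$, the first bracket is bounded through $\xv_{n,j}$: we use $\|T_j\xv_n-\xv_{n,j}\|=\alpha_n\|\gv_j\|\le\alpha_nC_{\max}$, which contributes an $\alpha_n^2C_{\max}^2$ term, together with the Lipschitz bound $L_{\max}\|\xv_{n,j}-\xv_n\|$, which Young turns into $8L_{\max}^2\alpha_n^{2-a}+\tfrac{\alpha_n^a}{8}\|\xv_n-\xv_{n,j}\|^2$.
\item The second bracket is $\le C_{\max}\|\xv_{n-\tau_n^j}-\xv_n\|$, using subgradients at $T_j\xv_n$ and nonexpansiveness. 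Young converts it into $\alpha_n^{2-a}C_{\max}^2$ terms plus $\tfrac{\alpha_n^a}{8}\|\xv_n-\xv_{n-\tau_n^j}\|^2$.
\item The term $2\alpha_n\langle\gv_j,T_j\xv_{n-\tau_n^j}-T_j\xv_n\rangle$ is treated like \eqref{L8}.
\end{itemize}
Finally, the delay differences are bounded via \eqref{L10-1} with $\tau_{\max}$ by $(\tau_{\max}+1)\sum_{i=0}^{\tau_{\max}}\|\xv_{n-i+1}-\xv_{n-i}\|^2$.

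\textbf{Averaging.} Summing over $j$ and dividing by $m$, the $f_j(\xv_n)$ terms give $\frac{2\alpha_n}{m}(f(\xv_n)-f(\xv^*))$. Since $\xv_{n+1}$ is the average of the $\xv_{n,j}$, convexity of $\|\cdot-\xv^*\|^2$ gives $\frac1m\sum_j\|\xv_{n,j}-\xv^*\|^2\ge\|\xv_{n+1}-\xv^*\|^2$. The collected coefficient of $\frac1m\sum_j\|\xv_n-\xv_{n,j}\|^2$ is $\frac{4\alpha_n^a}{8}-1$, since it is hit by four Young terms. All gradient-squared terms are bounded by $C_{\max}^2$ and sum to at most $2\alpha_n^2C_{\max}^2+40\alpha_n^{2-a}C_{\max}^2$.

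\textbf{Main obstacle.} The hard part is bookkeeping: each cross term has to be split so that exactly four $\tfrac{\alpha_n^a}{8}\|\xv_n-\xv_{n,j}\|^2$ pieces appear. Any remaining difference involving $\xv_{n-\tau_n^j}$ must be routed into the delay sum rather than producing extra $\|\xv_n-\xv_{n,j}\|^2$ terms. The constants $40$ and $8L_{\max}^2$ should then follow from matching Young splits as in \eqref{L6}--\eqref{L8}. Should the numerical constants not come out exactly, they can be adjusted without affecting the structure of the argument.
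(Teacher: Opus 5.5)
Your proof is correct, but it chains the function values through different intermediate points than the paper does. The paper first proves \eqref{D1}, a per-worker copy of Lemma \ref{mainlemma} whose target value is $f_j(\xv_{n,j})$ (the analogue of $f(\xv_{n+1})$). It gets there by passing through $T_j\xv_{n,j}$, just as \eqref{L5} passes through $T\xv_{n+1}$, which gives the coefficient $\frac{3\alpha_n^a}{8}-1$. Only afterwards does it replace $f_j(\xv_{n,j})$ by $f_j(\xv_n)$ using $L_{\max}$ and Young, which produces the fourth $\frac{\alpha_n^a}{8}$ and the $8L_{\max}^2\alpha_n^{2-a}$. You go $T_j\xv_{n-\tau_n^j}\to T_j\xv_n\to\xv_{n,j}\to\xv_n$ and never touch $T_j\xv_{n,j}$.

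The paper's route is modular: it recycles Lemma \ref{mainlemma} and adds one step. Yours is more robust, for two reasons.
\begin{itemize}
\item The only delay quantity you create is $\|\xv_n-\xv_{n-\tau_n^j}\|^2\le\tau_n^j\sum_{i=1}^{\tau_n^j}\|\xv_{n-i+1}-\xv_{n-i}\|^2$, which genuinely telescopes over server iterates. By contrast, \eqref{D1} bounds $\|\xv_{n,j}-\xv_{n-\tau_n^j}\|^2$ by the server-difference sum as if $\xv_{n,j}$ were $\xv_{n+1}$, and \eqref{L10-1} does not justify this. For $\tau_{\max}=0$ it would assert $\|\xv_{n,j}-\xv_n\|\le\|\xv_{n+1}-\xv_n\|$, which fails in general, e.g.\ when the displacements $\xv_{n,k}-\xv_n$ cancel in the average.
\item You only need subgradient bounds at $\xv_{n,j}$, $T_j\xv_n$ and the delayed points, all covered by Proposition \ref{LL-d1}(i). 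The paper's \eqref{D1} also uses a bound at $T_j\xv_{n,j}$, which that proposition does not cover.
\end{itemize}

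For this to work, handle your third bullet directly:
\[
2\alpha_n\langle\gv_j,T_j\xv_{n-\tau_n^j}-T_j\xv_n\rangle\le 2\alpha_nC_{\max}\|\xv_n-\xv_{n-\tau_n^j}\|\le 8\alpha_n^{2-a}C_{\max}^2+\tfrac{\alpha_n^a}{8}\|\xv_n-\xv_{n-\tau_n^j}\|^2 .
\]
Do not treat it literally ``like \eqref{L8}''. Inserting $\xv_{n,j}$ there, as \eqref{L8} inserts $\xv_{n+1}$, brings back exactly the uncontrolled term $\|\xv_{n,j}-\xv_{n-\tau_n^j}\|^2$.

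With this choice your bookkeeping is off, but in your favour:
\begin{itemize}
\item Only two Young pieces hit $\|\xv_n-\xv_{n,j}\|^2$, not four.
\item The $\alpha_n^{2-a}C_{\max}^2$ terms total $24$.
\item The $\alpha_n^2$ term is $2\alpha_n^2C_{\max}^2$, coming from the subgradient at $\xv_{n,j}$.
\item The two pieces $\frac{\alpha_n^a}{8}\|\xv_n-\xv_{n-\tau_n^j}\|^2$ are bounded by $\frac{2\alpha_n^a(\tau_{\max}+1)}{8}\sum_{i=0}^{\tau_{\max}}\|\xv_{n-i+1}-\xv_{n-i}\|^2$.
\end{itemize}
After averaging over $j$ and applying Jensen, you get the lemma with $\frac{2\alpha_n^a}{8}-1$ and $24$ in place of $\frac{4\alpha_n^a}{8}-1$ and $40$. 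The stated inequality then follows a fortiori, since the term $\frac1m\sum_j\|\xv_n-\xv_{n,j}\|^2$ is nonnegative. No adjustment of constants is needed.
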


\begin{proof} 
    Let $n\in\mathbb{N}_0$  and $\xv^*\in \displaystyle \bigcap_{j=1}^m\fix T_j$. Since the proving lines of the following relation go along the lines of Lemma \ref{mainlemma} with some modifications, we will omit its proof and state here that: for each $j=1,\ldots,m$, we have
\begin{align}\label{D1}
    \|\xv_{n,j}-\xv^*\|^2
    \leq& \|\xv_n-\xv^*\|^2 
    + \left(\frac{3\alpha_n^a}{8}-1\right)\|\xv_{n,j}-\xv_n\|^2
    + \frac{2\alpha_n^a}{8}\|\xv_{n,j}-\xv_{n-\tau_n^j}\|^2 +2\alpha_n(f_j(\xv^*)-f_j(\xv_{n,j}))\nonumber\\
    &  +2\alpha_n^2\|\widetilde{\nabla}f_j(\xv_{n,j})||\|\widetilde{\nabla}f_j(T_j\xv_{n,j})\|+24\alpha_n^{2-a}\|\widetilde{\nabla}f_j(T_j\xv_{n-\tau_n^j})\|^2  \nonumber\\
    & +8\alpha_n^{2-a}\|\widetilde{\nabla}f_j(T_j\xv_n)\|^2 + 8\alpha_n^{2-a}\|\widetilde{\nabla}f_j(T\xv_{n,j})\|^2 \nonumber\\
    \leq& \|\xv_n-\xv^*\|^2
     + \left(\frac{3\alpha_n^a}{8}-1\right)\|\xv_{n,j}-\xv_n\|^2 +2\alpha_n(f_j(\xv^*)-f_j(\xv_{n,j})) 
    +2\alpha_n^2C_{\max}^2  \nonumber\\
    &  + 40\alpha_n^{2-a}C_{\max}^2 +\frac{2\alpha_n^a(\tau_{\max}+1)}{8}\sum_{i=0}^{\tau_{\max}}\|\xv_{n-i+1}-\xv_{n-i}\|^2.
\end{align}
For each $j=1, \ldots, m$, it follows from Proposition \ref{LL-d1}(ii) and Young’s inequality that
\begin{align*}
2\alpha_n(f_j(\xv^*)-f_j(\xv_{n,j}))
=&2\alpha_n(f_j(\xv^*)-f_j(\xv_n)) + 2\alpha_n(f_j(\xv_n)-f_j(\xv_{n,j}))\nonumber\\
\leq& 2\alpha_n(f_j(\xv^*)-f_j(\xv_n)) + 2L_{\max}\alpha_n\|\xv_n-\xv_{n,j}\| \nonumber\\
\leq& 2\alpha_n(f_j(\xv^*)-f_j(\xv_n)) + 8L_{\max}^2\alpha_n^{2-a}+\frac{\alpha_n^a}{8}\|\xv_n-\xv_{n,j}\|^2
\end{align*}

and then the inequality \eqref{D1} becomes
\begin{align*}
    \|\xv_{n,j}-\xv^*\|^2
   \leq& \|\xv_n-\xv^*\|^2
     + \left(\frac{4\alpha_n^a}{8}-1\right)\|\xv_{n,j}-\xv_n\|^2  
    +2\alpha_n^2C_{\max}^2 + 40\alpha_n^{2-a}C_{\max}^2+8L_{\max}^2\alpha_n^{2-a} \nonumber\\
    &+\frac{2\alpha_n^a(\tau_{\max}+1)}{8}\sum_{i=0}^{\tau_{\max}}\|\xv_{n-i+1}-\xv_{n-i}\|^2+2\alpha_n(f_j(\xv^*)-f_j(\xv_{n})).
\end{align*}
By summing the above inequality for $j=1,\ldots,m$, invoking the definition of $\displaystyle \xv_{n+1}:= \frac{1}{m}\sum_{j=1}^m \xv_{n,j}$ and  $\displaystyle f:=\sum_{j=1}^mf_j$ and using the convexity of $\| \cdot \|^2$, we obtain
{\small \begin{align}\label{L11D}
   \| \xv_{n+1}-\xv^* \|^2 \leq& \frac{1}{m}\sum_{j=1}^m \| \xv_{n,j}-\xv^* \|^2 \nonumber \\
 \leq& \| \xv_n-\xv^*\|^2 + \Big(\frac{4\alpha_n^a}{8}-1\Big)\frac{1}{m}\sum_{j=1}^m \|\xv_{n}-\xv_{n,j}\|^2   +2\alpha_n^2C_{\max}^2 + 40\alpha_n^{2-a}C_{\max}^2 +8L_{\max}^2\alpha_n^{2-a} \nonumber\\
    &+\frac{2\alpha_n^a(\tau_{\max}+1)}{8}\sum_{i=0}^{\tau_{\max}} \Vert \xv_{n-i+1}-\xv_{n-i}\Vert^2 +\frac{2\alpha_n}{m}(f(\xv^*)-f(\xv_n)),
\end{align} }
which is the desired inequality. The proof is complete.
\end{proof}

Now, we can obtain that the sequence of best achieved function values $\left\{\min_{0\leq n \leq N} f(\xv_n)\right\}_{N=0}^\infty$ approximates the optimal value $f^*$ as the following theorem. Again, since its proof is directly obtained by following the line proofs of Theorem \ref{thm:rate}, we will omit the proof here.
\begin{theorem}\label{thm:rate: Distributed}
Let  $\{\xv_n\}_{n=0}^\infty$ be a sequence generated by Algorithm \ref{our AI 3} and $a\in (0,1)$.  
Suppose that Assumptions \ref{assumption_tau:Dis} and \ref{A: xn bbd:Dis} hold and  the sequence $\{\alpha_n\}_{n=0}^\infty\subset (0,\infty)$ satisfies  $(4+2(\tau_{\max}+1)^2)\alpha_0^a < 8$. 
Then, for all $N\in\mathbb{N}_0$ and  $ \xv^* \in \mathcal{S}_m^{*}$, we have
{\small \begin{align*}
     \min_{0\leq n \leq N}f(\xv_{n})-f^* 
    \leq \frac{m\Vert \xv_0-\xv^*\Vert^2 +2mC_{\max}^2\sum_{n=0}^N\alpha_n^2+ m(40C_{\max}^2+8L_{\max}^2)\sum_{n=0}^N \alpha_n^{2-a}}{2\sum_{n=0}^N \alpha_n},
\end{align*} }
where $C_{\max}$ and $L_{\max}$ are given in Proposition \ref{LL-d1}.
\end{theorem}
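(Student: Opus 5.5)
The plan is to mirror the proof of Theorem~\ref{thm:rate}, starting from the per-iteration inequality of Lemma~\ref{mainlemma:Dis}. Fix $\xv^*\in\mathcal{S}_m^{*}$, so that $f(\xv^*)=f^*$, and take any $N\in\mathbb{N}_0$. Summing the inequality of Lemma~\ref{mainlemma:Dis} from $n=0$ to $n=N$ telescopes the terms $\|\xv_n-\xv^*\|^2-\|\xv_{n+1}-\xv^*\|^2$. We bound what remains by $\|\xv_0-\xv^*\|^2$, dropping $-\|\xv_{N+1}-\xv^*\|^2\le 0$. The constant terms then contribute $2C_{\max}^2\sum_{n=0}^N\alpha_n^2+(40C_{\max}^2+8L_{\max}^2)\sum_{n=0}^N\alpha_n^{2-a}$. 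Here $C_{\max}$ and $L_{\max}$ are finite by Proposition~\ref{LL-d1}, which is where Assumption~\ref{A: xn bbd:Dis} enters.

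The main task is to absorb the delay term $\frac{2(\tau_{\max}+1)}{8}\sum_{n=0}^N\alpha_n^a\sum_{i=0}^{\tau_{\max}}\|\xv_{n-i+1}-\xv_{n-i}\|^2$ into the negative term $\bigl(\frac{4\alpha_n^a}{8}-1\bigr)\frac1m\sum_j\|\xv_n-\xv_{n,j}\|^2$.

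First, exchange the order of summation exactly as in \eqref{TT3}. This uses $\xv_0=\cdots=\xv_{-\tau_{\max}}$ and assumes, as in Theorem~\ref{thm:rate}, that $\{\alpha_n\}$ is nonincreasing; the statement should be read with that implicit. The result is a bound by $(\tau_{\max}+1)\sum_{n=0}^N\alpha_n^a\|\xv_{n+1}-\xv_n\|^2$.

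Second, relate consecutive iterates to the worker displacements. Since $\xv_{n+1}-\xv_n=\frac1m\sum_j(\xv_{n,j}-\xv_n)$, convexity of $\|\cdot\|^2$ gives $\|\xv_{n+1}-\xv_n\|^2\le\frac1m\sum_j\|\xv_{n,j}-\xv_n\|^2$. Combining the two steps, the coefficient of $\frac1m\sum_j\|\xv_{n,j}-\xv_n\|^2$ becomes $\frac{(4+2(\tau_{\max}+1)^2)\alpha_n^a}{8}-1$. This is negative because $\alpha_n\le\alpha_0$ and $(4+2(\tau_{\max}+1)^2)\alpha_0^a<8$, so the whole term can be discarded.

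We are left with
\[
\frac{2}{m}\sum_{n=0}^N\alpha_n(f(\xv_n)-f^*)\le \|\xv_0-\xv^*\|^2+2C_{\max}^2\sum_{n=0}^N\alpha_n^2+(40C_{\max}^2+8L_{\max}^2)\sum_{n=0}^N\alpha_n^{2-a}.
\]
Bounding each $f(\xv_n)-f^*$ from below by $\min_{0\le n\le N}f(\xv_n)-f^*$, multiplying by $m/2$, and dividing by $\sum_{n=0}^N\alpha_n$ gives the stated estimate.

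The only delicate point is the delay absorption. It requires both the index shift with monotone step sizes and the Jensen step linking $\|\xv_{n+1}-\xv_n\|^2$ to the averaged worker displacements. The latter is what allows the negative term in Lemma~\ref{mainlemma:Dis}, which is written in terms of the $\xv_{n,j}$, to dominate.
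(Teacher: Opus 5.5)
Your proposal is correct and follows the route the paper intends when it says the proof is obtained ``by following the line proofs of Theorem~\ref{thm:rate}'': sum Lemma~\ref{mainlemma:Dis}, telescope, reindex the delay term as in \eqref{TT3}, and absorb it into the negative term. The two details you make explicit are exactly what the omitted proof silently relies on: the nonincreasing hypothesis on $\{\alpha_n\}$, which is needed both for \eqref{TT3} and to pass from $\alpha_0$ to every $\alpha_n$ in the step-size condition, and which is missing from the statement as written; and the Jensen bound $\|\xv_{n+1}-\xv_n\|^2\le\frac1m\sum_{j}\|\xv_{n,j}-\xv_n\|^2$, which links the delay term to the workers' displacements.
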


In order to establish the convergence in iterations, we need the following lemma.
This lemma follows immediately from Lemma \ref{LL}:
\begin{lemma}
    Let $\{\xv_n\}_{n=0}^\infty$ be a sequence generated by Algorithm \ref{our AI 3}.
    Suppose that Assumptions \ref{assumption_tau:Dis} and \ref{A: xn bbd:Dis} hold. 
    Then, for all $n\in \mathbb{N}_0$ and $\xv^*\in \bigcap_{j=1}^m\fix T_j$, we have 
 \begin{align*}
    \|\xv_{n+1}-\xv^*\|^2
    \leq& \|\xv_n-\xv^*\|^2 - \frac{1}{m}\sum_{j=1}^m\|\xv_{n,j}-\xv_n\|^2 +\frac{2\alpha_n C_{\max}}{m}\sum_{j=1}^m\|\xv_{n-\tau_n^j}-\xv_n\| \nonumber\\
    & + \frac{2\alpha_n C_{\max}}{m}\sum_{j=1}^m\|\xv_{n,j}-\xv_n\|
    + \frac{2\alpha_n}{m}\sum_{j=1}^m(f_j(\xv^*)-f_j(T_j\xv_{n-\tau_n^j})),
\end{align*}
    where $C_{\max}$ is given in Proposition \ref{LL-d1}.
\end{lemma}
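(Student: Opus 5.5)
The plan is to apply the argument of Lemma \ref{LL} to each worker separately and then average over $j$. Fix $n\in\N_0$, $\xv^*\in\bigcap_{j=1}^m\fix T_j$, and a worker $j$.

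\textbf{Per-worker inequality.} Since $T_j$ is firmly nonexpansive and $\xv^*\in\fix T_j$, Fact \ref{fne-fact}(2) gives $\langle \xv_n-T_j\xv_n,\xv^*-T_j\xv_n\rangle\le 0$. Substitute $T_j\xv_n=\xv_{n,j}+\alpha_n\widetilde{\nabla}f_j(T_j\xv_{n-\tau_n^j})$ exactly as in the derivation of \eqref{L2}. This yields
\begin{align*}
2\langle \xv_n-\xv_{n,j},\xv^*-\xv_{n,j}\rangle\le{}& 2\alpha_n\langle \widetilde{\nabla}f_j(T_j\xv_{n-\tau_n^j}),\xv^*-T_j\xv_n\rangle\\
&+2\alpha_n\langle \widetilde{\nabla}f_j(T_j\xv_{n-\tau_n^j}),\xv_n-\xv_{n,j}\rangle .
\end{align*}

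\textbf{Bounding the right-hand side.} Split the first inner product through $T_j\xv_{n-\tau_n^j}$, as in \eqref{LL2}. The subgradient inequality gives the term $2\alpha_n(f_j(\xv^*)-f_j(T_j\xv_{n-\tau_n^j}))$. The remaining piece is bounded via Cauchy--Schwarz, the nonexpansiveness of $T_j$, and Proposition \ref{LL-d1}(i), giving $2\alpha_nC_{\max}\|\xv_{n-\tau_n^j}-\xv_n\|$. The second inner product is bounded by $2\alpha_nC_{\max}\|\xv_{n,j}-\xv_n\|$. Then use the polarization identity \eqref{LL4} with $\xv_{n+1}$ replaced by $\xv_{n,j}$ to obtain
\begin{align*}
\|\xv_{n,j}-\xv^*\|^2\le{}&\|\xv_n-\xv^*\|^2-\|\xv_{n,j}-\xv_n\|^2+2\alpha_nC_{\max}\|\xv_{n-\tau_n^j}-\xv_n\|\\
&+2\alpha_nC_{\max}\|\xv_{n,j}-\xv_n\|+2\alpha_n(f_j(\xv^*)-f_j(T_j\xv_{n-\tau_n^j})).
\end{align*}

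\textbf{Averaging.} Sum this over $j=1,\ldots,m$ and divide by $m$. Since $\xv_{n+1}=\frac1m\sum_j\xv_{n,j}$, the convexity of $\|\cdot\|^2$ gives $\|\xv_{n+1}-\xv^*\|^2\le\frac1m\sum_j\|\xv_{n,j}-\xv^*\|^2$. This produces exactly the stated inequality.

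The only delicate point is the subgradient bound. We need $\|\widetilde{\nabla}f_j(T_j\xv_{n-\tau_n^j})\|\le C_{\max}$, which Proposition \ref{LL-d1}(i) provides because $T_j\xv_k$ is bounded for all $k\ge-\tau_{\max}$. This uses Assumption \ref{A: xn bbd:Dis} together with the convention that the initial points coincide. Everything else is routine bookkeeping.
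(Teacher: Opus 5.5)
Your proposal is correct and follows the paper's route: the paper says only that this lemma "follows immediately from Lemma \ref{LL}", meaning you repeat that argument for each worker $j$ with $(T_j,f_j,\xv_{n,j},\tau_n^j)$ in place of $(T,f,\xv_{n+1},\tau_n)$ and then average using the convexity of $\|\cdot\|^2$, as in the proof of Lemma \ref{mainlemma:Dis}. Your handling of negative delayed indices through the convention that the initial points coincide is a correct detail that the paper leaves implicit.
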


Finally, we are in a position to state the main theorem of this section. The proof is also directly obtained by following the line proofs of Theorem \ref{thm:con} and Corollary \ref{cor:main}. We also omit its proof.
\begin{theorem}\label{thm:con:Dis}
   Let  $\{\xv_n\}_{n=0}^\infty$ be a sequence generated by Algorithm \ref{our AI 3} and $a\in (0,1)$.  
Suppose that Assumptions \ref{assumption_tau:Dis} and \ref{A: xn bbd:Dis} hold. Suppose that  the sequence $\{\alpha_n\}_{n=0}^\infty\subset (0,\infty)$ satisfies $\lim_{n\to \infty}\alpha_n=0$ and $\sum_{n=0}^\infty\alpha_n=\infty$.
Then, there exists a subsequence of $\{\xv_n\}_{n=0}^\infty$ that converges to an optimal solution in $\mathcal{S}^*$.  Furthermore, if the whole function $f$ is strictly convex, then the subsequence of $\{\xv_n\}_{n=0}^\infty$ converges to a unique optimal solution of the problem (\ref{main-pb-sum}).
\end{theorem}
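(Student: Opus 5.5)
We follow the two-case argument of Theorem \ref{thm:con} and Corollary \ref{cor:main}, but start from the preceding distributed lemma instead of Lemma \ref{LL}.

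\textbf{Reduction.} By Assumption \ref{A: xn bbd:Dis}, $\{\xv_n\}$ and $\{\xv_{n,j}\}$ are bounded. Hence Proposition \ref{LL-d1} supplies the constants $C_{\max}$ and $L_{\max}$. We also have $\|\xv_{n,j}-\xv_n\|\le 2M'$ and $\|\xv_{n-\tau_n^j}-\xv_n\|\le 2M'$ for a suitable bound $M'$. Fix $\xv^*\in\mathcal{S}^*_m$. Each term $f_j(\xv^*)-f_j(T_j\xv_{n-\tau_n^j})$ is bounded above by a Lipschitz estimate on a bounded set containing $\xv^*$ and the $T_j\xv_n$. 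Inserting these bounds into the distributed lemma gives the analogue of \eqref{tt1}:
$$\frac1m\sum_{j=1}^m\|\xv_{n,j}-\xv_n\|^2\le \|\xv_n-\xv^*\|^2-\|\xv_{n+1}-\xv^*\|^2+K\alpha_n$$
for some constant $K$.

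\textbf{Case 1.} Suppose $\|\xv_n-\xv^*\|$ is eventually nonincreasing. Then it converges, and the inequality above gives $\|\xv_{n,j}-\xv_n\|\to0$ for every $j$. Consequently:
\begin{itemize}
\item $\|T_j\xv_n-\xv_n\|\le \alpha_nC_{\max}+\|\xv_{n,j}-\xv_n\|\to0$;
\item $\|\xv_{n+1}-\xv_n\|\le\frac1m\sum_j\|\xv_{n,j}-\xv_n\|\to0$;
\item $\|\xv_{n-\tau_n^j}-\xv_n\|\to0$, because the delays are bounded by $\tau_{\max}$.
\end{itemize}
Next we lower-bound $f_j(T_j\xv_{n-\tau_n^j})-f_j(\xv^*)$ by $f_j(\xv_n)-f_j(\xv^*)$ minus $L_{\max}$ times the vanishing quantities, exactly as in \eqref{tt6}. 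This yields $\alpha_nM_n\le\frac12(\|\xv_n-\xv^*\|^2-\|\xv_{n+1}-\xv^*\|^2)$, where $M_n=\frac1m(f(\xv_n)-f^*)-(\text{vanishing terms})$. Since $\sum\alpha_n=\infty$, this gives $\liminf_n f(\xv_n)\le f^*$. Take a subsequence realizing the liminf, and extract a further convergent subsequence with limit $\bar\xv$. Demiclosedness of each $T_j$ (Fact \ref{fne-fact}) gives $\bar\xv\in\bigcap_j\fix T_j$, and continuity of $f$ gives $f(\bar\xv)\le f^*$, so $\bar\xv\in\mathcal S^*_m$.

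\textbf{Case 2.} Otherwise, apply Lemma \ref{prop1} to $a_n=\|\xv_n-\xv^*\|^2$ to obtain $\mu(n)$ with $a_{\mu(n)}\le a_{\mu(n)+1}$. The displayed inequality then gives $\|\xv_{\mu(n),j}-\xv_{\mu(n)}\|\to0$, and the same three vanishing limits follow along $\mu(n)$. Using $a_{\mu(n)}\le a_{\mu(n)+1}$ in the distributed lemma and dividing by $2\alpha_{\mu(n)}/m$, we get $\limsup f(\xv_{\mu(n)})\le f^*$. Compactness and demiclosedness then produce a cluster point in $\mathcal S^*_m$.

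\textbf{Strict convexity.} If $f$ is strictly convex, $\mathcal S^*_m=\{\xv^*\}$. In Case 1 the limit of $\|\xv_n-\xv^*\|$ must be $0$, because a subsequence converges to $\xv^*$. In Case 2, every cluster point of $\{\xv_{\mu(n)}\}$ is $\xv^*$, and the same holds for $\{\xv_{\mu(n)+1}\}$ since $\|\xv_{\mu(n)+1}-\xv_{\mu(n)}\|\to0$. Hence $\|\xv_n-\xv^*\|\le\|\xv_{\mu(n)+1}-\xv^*\|\to0$ by Lemma \ref{prop1}, so the whole sequence converges.

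\textbf{Main obstacle.} The delicate step is Case 1's conclusion $\liminf f(\xv_n)\le f^*$. There one must control all $m$ delayed and operator-mismatch terms $\|T_j\xv_{n-\tau_n^j}-T_j\xv_n\|$ and $\|T_j\xv_n-\xv_n\|$ individually for each $j$, not only for the averaged iterate. This is why we extract $\|\xv_{n,j}-\xv_n\|\to0$ for each $j$ rather than just $\|\xv_{n+1}-\xv_n\|\to0$.
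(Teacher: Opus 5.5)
Your plan is the route the paper intends: it omits this proof and points back to Theorem~\ref{thm:con} and Corollary~\ref{cor:main}. Your Case~1 is sound, and so is your passage from ``all cluster points equal $\xv^*$'' to whole-sequence convergence; you are even more explicit than the paper about $\xv_{\mu(n)+1}\to\xv^*$. The gap is in Case~2, where you assert that ``the same three vanishing limits follow along $\mu(n)$.''

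The first two do follow, because $a_{\mu(n)}\le a_{\mu(n)+1}$ turns your displayed inequality into $\frac1m\sum_j\|\xv_{\mu(n),j}-\xv_{\mu(n)}\|^2\le K\alpha_{\mu(n)}$. The third does not. The bound $\|\xv_{\mu(n)-\tau^j_{\mu(n)}}-\xv_{\mu(n)}\|\le\sum_{i=1}^{\tau_{\max}}\|\xv_{\mu(n)-i+1}-\xv_{\mu(n)-i}\|$ involves the steps at $k=\mu(n)-1,\dots,\mu(n)-\tau_{\max}$. For those steps your inequality only gives $\|\xv_{k+1}-\xv_k\|^2\le a_k-a_{k+1}+K\alpha_k$, and Lemma~\ref{prop1} says nothing about the sign of $a_k-a_{k+1}$ for $k<\mu(n)$.

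You cannot drop this term. It enters through $C_{\max}\|\xv_{\mu(n)-\tau^j_{\mu(n)}}-\xv_{\mu(n)}\|$ in the distributed lemma, and again through your Lipschitz lower bound on $f_j(T_j\xv_{\mu(n)-\tau^j_{\mu(n)}})$. Without it you cannot conclude $\limsup_n f(\xv_{\mu(n)})\le f^*$, so neither the Case~2 existence claim nor the strictly convex conclusion is established. The paper's \eqref{tt9} makes the same leap ``by following the above lines,'' so you have inherited it. This step, not Case~1, is the genuinely delicate point.

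The missing ingredient is asymptotic regularity along the \emph{whole} sequence, proved before any case split. Let $F:=\bigcap_{j=1}^m\fix T_j$, $\mathbf{p}_n:=P_F\xv_n$ and $d_n:=\|\xv_n-\mathbf{p}_n\|=\mathrm{dist}(\xv_n,F)$. Firm nonexpansiveness gives $\|T_j\xv_n-\mathbf{p}_n\|^2\le d_n^2-\|T_j\xv_n-\xv_n\|^2$. Boundedness and convexity of $\|\cdot\|^2$ then yield
\[
d_{n+1}^2\le\|\xv_{n+1}-\mathbf{p}_n\|^2\le d_n^2-\frac1m\sum_{j=1}^m\|T_j\xv_n-\xv_n\|^2+K'\alpha_n .
\]
The continuous function $\phi(\xv):=\frac1m\sum_j\|T_j\xv-\xv\|^2$ vanishes on the compact set $\overline{\{\xv_n\}}$ only at points of $F$. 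So for each $\epsilon>0$ it has a lower bound $\delta>0$ on $\{\xv\in\overline{\{\xv_n\}}:\mathrm{dist}(\xv,F)\ge\epsilon\}$. Together with $\alpha_n\to0$, this forces $\limsup_n d_n\le\epsilon$, hence $d_n\to0$.

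Consequently $\|T_j\xv_n-\xv_n\|\le 2d_n\to0$ and $\|\xv_{n+1}-\xv_n\|\le\frac1m\sum_j\|T_j\xv_n-\xv_n\|+\alpha_nC_{\max}\to0$. Therefore $\|\xv_{n-\tau^j_n}-\xv_n\|\to0$ along every index sequence, in particular along $\mu(n)$, and your Case~2 goes through as written. Once these limits hold globally, your Case~1 telescoping bound $\sum_{n\le N}\alpha_nM_n\le\frac12\|\xv_0-\xv^*\|^2$ never needed monotonicity of $a_n$. It already gives $\liminf_n f(\xv_n)\le f^*$ and the existence claim without any case split; the split is needed only for the strictly convex part.
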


\section{Image Inpainting Problems} \label{Image Inpainting Problems}

In this section, we will apply Algorithm \ref{our AI} to solve the image inpainting problem. All the experiments were performed by MATLAB(R2023b) on a MacBook Air 13.3-inch with Apple M1 chip processor and 8GB memory. 
\vskip 2mm

For an complete ideal image $\xv\in \mathbb{R}^{256 \times 256}$ whose its component $\xv_{i,j}$ is the pixel value in $i$-th row and $j$-th column, for all $i, j = 1,\ldots,256$, we represent the vector $\xv\in \mathbb{R}^d$, where $d=256^2=65536$, the vector generated by vectorizing the image $X$. 
Given the damaged image $\bv\in \mathbb{R}^d$ and the diagonal matrix $B:=\text{diag}(\overline{\bv}) \in \mathbb{R}^{d\times d}$ such that $\overline{\bv}\in \mathbb{R}^d$ is defined by
\begin{align*}
\overline{\bv}_{i,j}=\begin{cases}
0 & \text{ if $\bv_{i,j}=0,$ }\\
1 & \text{ if $\bv_{i,j} \neq 0$},
\end{cases}
\end{align*}
the purpose of image inpainting problem is to find an ideal image $\overline{\xv}\in \mathbb{R}^d$ such that
\begin{align}\label{inverse}
    B\overline{\xv}=\bv.
\end{align}
As it is well known that the problem \eqref{inverse} is the ill-conditional linear inverse problem, in this section, we will investigate a strategy for solving it by considering the following nonsmooth convex constrained minimization problem of the form:
\begin{align}\label{inpainting}
	\begin{array}{ll}
		\textrm{minimize }\indent \Vert W\xv \Vert_1\\
		\textrm{subject to}\indent \xv\in \mathrm{argmin}_{\uv\in \R^d} \frac{1}{2}\Vert B\uv-\bv\Vert^2,
	\end{array}%
\end{align}
where the objective function is a nonsmooth sparsity-promoting potential function $\Vert W\xv \Vert_1$ and $W$ is a dictionary transform, see the book of Starck et al. \cite{SMF-10} for further details. 
\vskip 2mm

To investigate the solving performance, we will consider 5 different dictionary transforms will be mentioned as follows. For all $\xv\in \mathbb{R}^d$,

\begin{enumerate}
\item[\bf(1)]  The transform $R:\mathbb{R}^d \to \mathbb{R}^d$ is       defined by 
    \begin{align*}
            (R\xv)_{i,j}=\begin{cases}
			\xv_{i+1,j}-\xv_{i,j}, & \text{if $i<256,$ }\\
            0 & \text{otherwise}.
		 \end{cases}
    \end{align*}  

\item[\bf(2)] The transform $C:\mathbb{R}^d \to \mathbb{R}^d$ is       defined by
    \begin{align*}
            (C\xv)_{i,j}=\begin{cases}
			\xv_{i,j+1}-\xv_{i,j} & \text{if $j<256,$ }\\
            0 & \text{otherwise}.
	   \end{cases}
    \end{align*}
  
\item[\bf(3)]  The transform $H:\mathbb{R}^d \to \mathbb{R}^{256 \times 256}$ is defined by
    \begin{align}\label{H}
        H\xv := (W_{256,8}W_{256,7}\cdots W_{256,2} W_{256})X(W_{256,8}W_{256,7}\cdots W_{256,2} W_{256})^T,    
    \end{align}
where $X \in \mathbb{R}^{256\times 256}$ is an image $\xv$ in the form of matrix
and, for some positive integer $k$ such that $1\leq k \leq 8,$ a matrix $W_{256,k}\in \mathbb{R}^{256 \times 256}$ is given by
\begin{align*}
W_{256,k} := \begin{cases}
			W_{256} & \text{ if $k=1,$ }\\
            \text{diag}(W_{2^{8-k+1}}, I_{2^{8-k+1}}, I_{2^{8-k+2}}, \ldots, I_{2^{8-1}}) & \text{ if $2 \leq k \leq 8$},
		 \end{cases}
\end{align*}
where the matrix $\text{diag}(D_1, \ldots, D_l)$ with positive integer $l$ contains the blocks $D_1, \ldots, D_l$ along its diagonal and zeros in all other positions and, for an even positive integer $M$,  the matrix $W_M\in \mathbb{R}^{M\times M}$ is given by
\begin{align*}
    W_M := \sqrt{2}\left[\frac{H_{\frac{M}{2}}}{-G_\frac{M}{2}} \right],
\end{align*}
where the submatrix $H_{\frac{M}{2}}$ and $G_{\frac{M}{2}}$ are defined as in the relation $(4.7)$ of book of van Fleet \cite[page 134]{P-19}.
The operator $H$ is the well-known Haar wavelet transform, which is an important tool for image processing \cite{P-19}.

\item[\bf(4)] The transform $L:\mathbb{R}^d \to \mathbb{R}^d        \times \mathbb{R}^d$ is defined by
    \begin{align*}
        L\xv:=(R\xv,C\xv),
    \end{align*}
    which is the combination of $R$ and $C$ given in {\bf(1)} and {\bf(2)}, respectively, and  known as the anisotropic TV norm\cite{C-04,B-14}. 

\item[\bf(5)] The transform $G:\mathbb{R}^d \to \mathbb{R}^{d} \times \mathbb{R}^d$ is given by
\begin{align*}
G\xv:= (H\xv, L\xv),
\end{align*}
where $H\xv$ is the vectorized form of the image $H\xv \in \mathbb{R}^{256 \times 256}$ given in \eqref{H}. The transform $G$ is defined as the combination of the transforms $L$ and $H$, i.e., 
$\|G\cdot \|_1 = \|L\cdot \|_1 + \|H\cdot \|_1.$
\end{enumerate}

For applying Algorithm \ref{our AI} to solve the problem \eqref{inpainting},  we set the objective function $f(\xv)=\Vert W\xv \Vert_1$ and the operator $T:\mathbb{R}^d \to \mathbb{R}^d$  by 
$$
T\xv := \xv - \frac{1}{\Vert B \Vert^2}B^T (B\xv-\bv)
$$
 for all $\xv\in\mathbb{R}^d$. Note that $T$ is a firmly nonexpansive operator with 
$$
\fix T = \mathrm{argmin}_{\uv\in \mathbb{R}^d} \frac{1}{2}\Vert B\uv-\bv\Vert^2
$$
(see \cite[Lemma 4.6.2]{C-12}).
We set the initial vector $\xv_0$ be the zero vector. We consider the method's performance for several time-varying delays' bounds  $\tau=0, 1, 3, 5, 10$ and $20$ and set the time-varying delayed sequence by
$\tau_n = n \mod{(\tau+1)}$.
We set the step size 
 $\alpha_n = \frac{a_0}{n+1} \left(\frac{8}{3+2(\tau+1)^2}\right)^{1/a}$,
where $a, a_0 \in \{0.1,0.2,\ldots,0.9\}$. 
We conducted experiments for reconstructing three RGB images sized $256 \times 256$, the original images of Cafe image, Blue Lagoon image, and Pagoda image are shown in Figure \ref{FI-original}, whose noisy images are obtained by randomly masking 50\% of all pixels to black.
\begin{figure}[H]
	\begin{center}
        \begin{minipage}[c]{0.32\linewidth}
		\centering
		{\includegraphics[scale = 0.39]{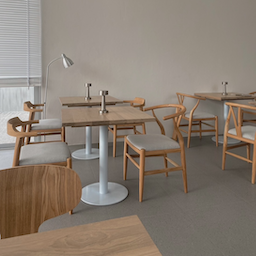}}	\\
	     	(a)  The Cafe image
        \end{minipage}
        \begin{minipage}[c]{0.32\linewidth}
		\centering
		{\includegraphics[scale = 0.39]{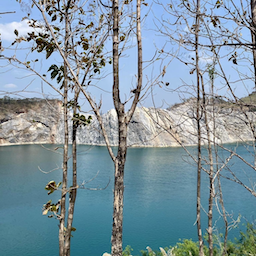}}	\\
	     	(b)  The Blue Lagoon image    
        \end{minipage}
        \begin{minipage}[c]{0.32\linewidth}
		\centering
		{\includegraphics[scale = 0.39]{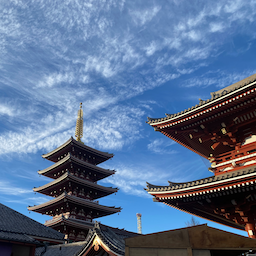}}	\\
	     	(c)  The Pagoda image 
        \end{minipage}
            \caption{\label{FI-original}Three original images.}
	\end{center}
\end{figure}
\vspace{-0.2cm}

We measure the efficiency of the reconstructing images by the Peak Signal-to-Noise Ratio (PSNR).
We examine the performance of the proposed method by considering the effects of the five objective functions according to the different dictionary transforms discussed above. We terminate the method by either the computational runtime reaches $10$ seconds or the number of iterations reaches $500$ iterations, whichever came first. The results are given in Tables \ref{TI1} - \ref{TI3}.

\subsection{The Cafe Image}
\vspace{-0.2cm}
In this subsection, we present the experiment results for the Cafe image. We firstly investigate the influences of the parameters $a$ and $a_0$ given in the step-size sequence on the PSNR value and the number of iterations  $\#$(iter) for several time-varying delays' bounds and for different dictionary transforms. The best choices of fine-tuning parameters $a$ and $a_0$ with the PSNR values and the numbers of iterations are presented in Table \ref{TI1}. 

\begin{table}[H]
\centering
\small
\begin{tabular}{ccccccccccccc}
\toprule
\multirow{2}{*}{Transform} & \multicolumn{4}{c}{$\tau=0$}      & \multicolumn{4}{c}{$\tau=1$}      & \multicolumn{4}{c}{$\tau=3$}      \\
\cmidrule(rl){2-5} \cmidrule(rl){6-9} \cmidrule(rl){10-13}
                    & $a$   & $a_0$  & PSNR    & $\#$(iter) & $a$   & $a_0$  & PSNR    & $\#$(iter) & $a$   & $a_0$  & PSNR    & $\#$(iter) \\
\midrule
$R$                   & 0.7 & 0.5 & 29.9419 & 340  & 0.9 & 0.9 & 29.9254 & 372  & 0.9 & 0.9 & 23.3780 & 384  \\
$C$                   & 0.5 & 0.3 & 30.1714 & 338  & 0.9 & 0.9 & 30.0426 & 374  & 0.9 & 0.9 & 23.2596 & 385  \\
$H$                   & 0.7 & 0.2 & 29.0804 & 253  & 0.5 & 0.7 & 29.1040 & 305  & 0.9 & 0.9 & 26.9789 & 339  \\
$L$                  & 0.5 & 0.1 & 33.0416 & 280  & 0.4 & 0.5 & 33.0923 & 333  & 0.9 & 0.8 & 32.9870 & 353  \\
$G$                 & 0.9 & 0.1 & 32.5660 & 201  & 0.2 & 0.5 & 32.5537 & 271  & 0.9 & 0.8 & 32.3546 & 319 
\end{tabular}

\begin{tabular}{ccccccccccccc}
\midrule
\multirow{2}{*}{Transform} 
                    & \multicolumn{4}{c}{$\tau=5$}      & \multicolumn{4}{c}{$\tau=10$}     & \multicolumn{4}{c}{$\tau=20$}     \\  
\cmidrule(rl){2-5} \cmidrule(rl){6-9} \cmidrule(rl){10-13}
                    & $a$   & $a_0$  & PSNR    & $\#$(iter) & $a$   & $a_0$  & PSNR    & $\#$(iter) & $a$   & $a_0$  & PSNR    & $\#$(iter) \\
\midrule
$R$                   & 0.9 & 0.9 & 16.4472 & 392  & 0.9 & 0.9 & 11.1249 & 396  & 0.9 & 0.9 & 9.3948  & 393  \\
$C$                   & 0.9 & 0.9 & 16.4131 & 391  & 0.9 & 0.9 & 11.1148 & 396  & 0.9 & 0.9 & 9.3938  & 398  \\
$H$                   & 0.9 & 0.9 & 16.3118 & 355  & 0.9 & 0.9 & 10.6086 & 366  & 0.9 & 0.9 & 9.2739  & 374  \\
$L$                  & 0.9 & 0.9 & 30.4198 & 367  & 0.9 & 0.9 & 13.9825 & 371  & 0.9 & 0.9 & 9.9458  & 380  \\
$G$                 & 0.9 & 0.9 & 29.0053 & 339  & 0.9 & 0.9 & 14.3807 & 359  & 0.9 & 0.9 & 10.0546 & 370 \\
\bottomrule
\end{tabular}
  \caption{\label{TI1}The best choices of parameters $a$ and $a_0$ with the PSNR values and the numbers of iterations for the Cafe image.}
\end{table}

\vspace{-0.2cm}
It is observed from Table \ref{TI1} that the parameters $a$ and $a_0$ seems to be steady at higher $\tau$ ($\tau= 5, 10, 20$) with the same value $a=a_0=0.9$.
Focusing on the performance comparisons of the dictionary transforms and the delay's bounds, we observe that the transform $L$ gives the highest PSNR value for the delays' bounds $\tau=0, 1, 3$ and $5$ compared to others transforms. However, for the delay's bounds $\tau=10$ and $20$, we observe the highest PSNR value with the transform $G$. 
The best PSNR value of $33.0923$ is obtained when the dictionary transform is $L$ and the delays' bound $\tau=1$.

To illustrate more insightful the behaviors of PSNR values obtained from each dictionary transform and the delays' bound. To plot each curve, we put the possibly best choices $a$ and $a_0$  given in Table \ref{TI1} when performing  Algorithm \ref{our AI}. We show the PSNR curves within $5$ seconds in Figure \ref{FI1}.

\begin{figure}[H]
	\begin{center}
        \begin{minipage}[c]{0.32\linewidth}
		\centering
		{\includegraphics[scale = 0.13]{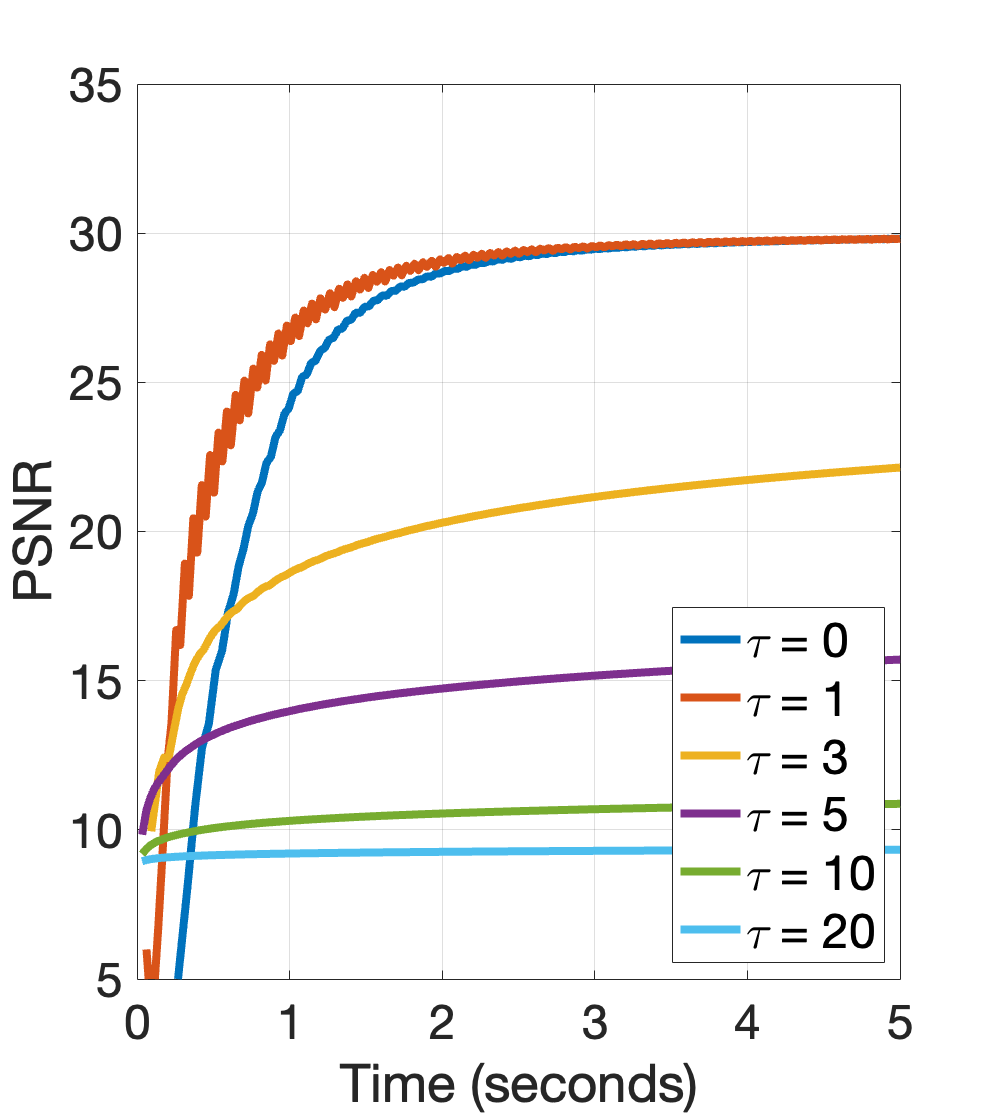}}	\\
	     	(a)  Transform $R$
        \end{minipage}
        \begin{minipage}[c]{0.32\linewidth}
		\centering
		{\includegraphics[scale = 0.13]{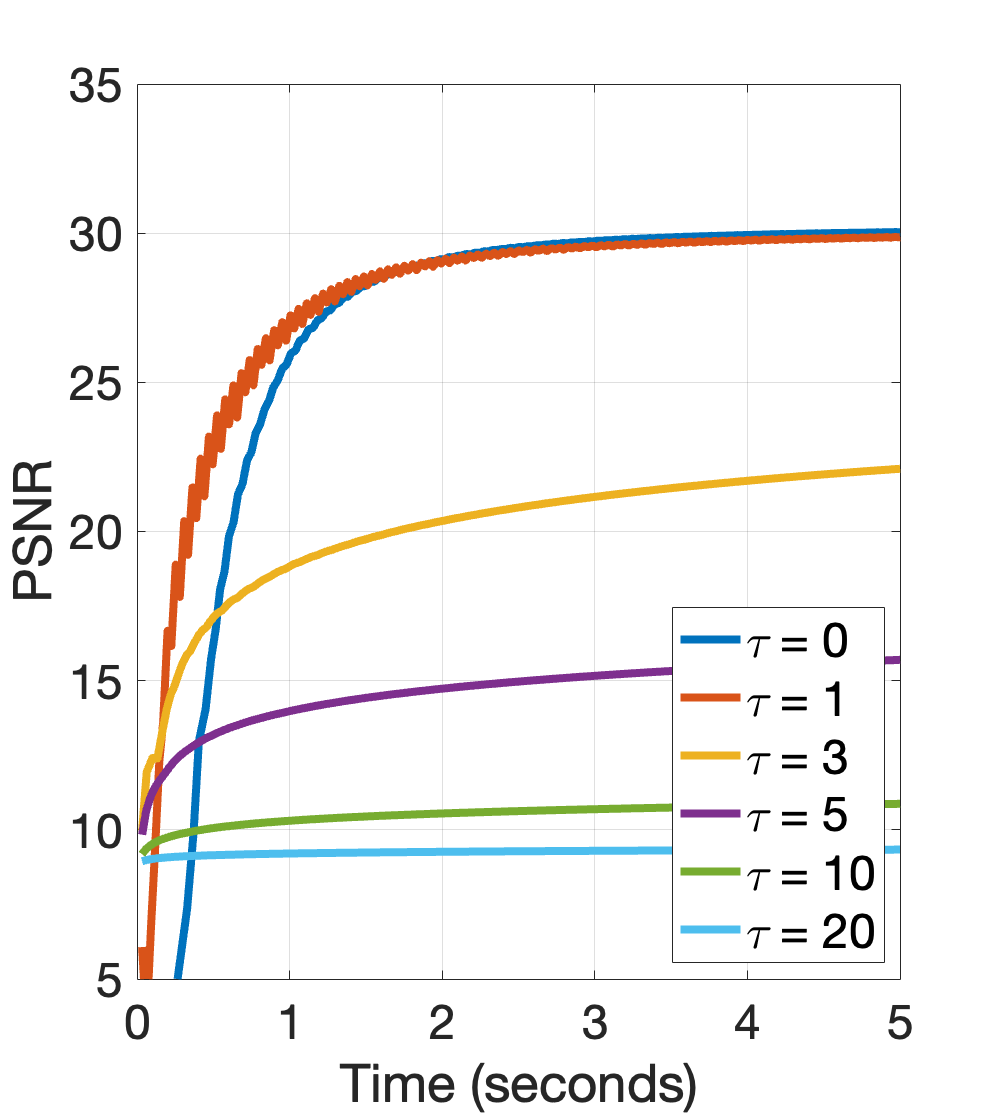}}	\\
	     	(b)  Transform $C$     
        \end{minipage}
        \begin{minipage}[c]{0.32\linewidth}
		\centering
		{\includegraphics[scale = 0.13]{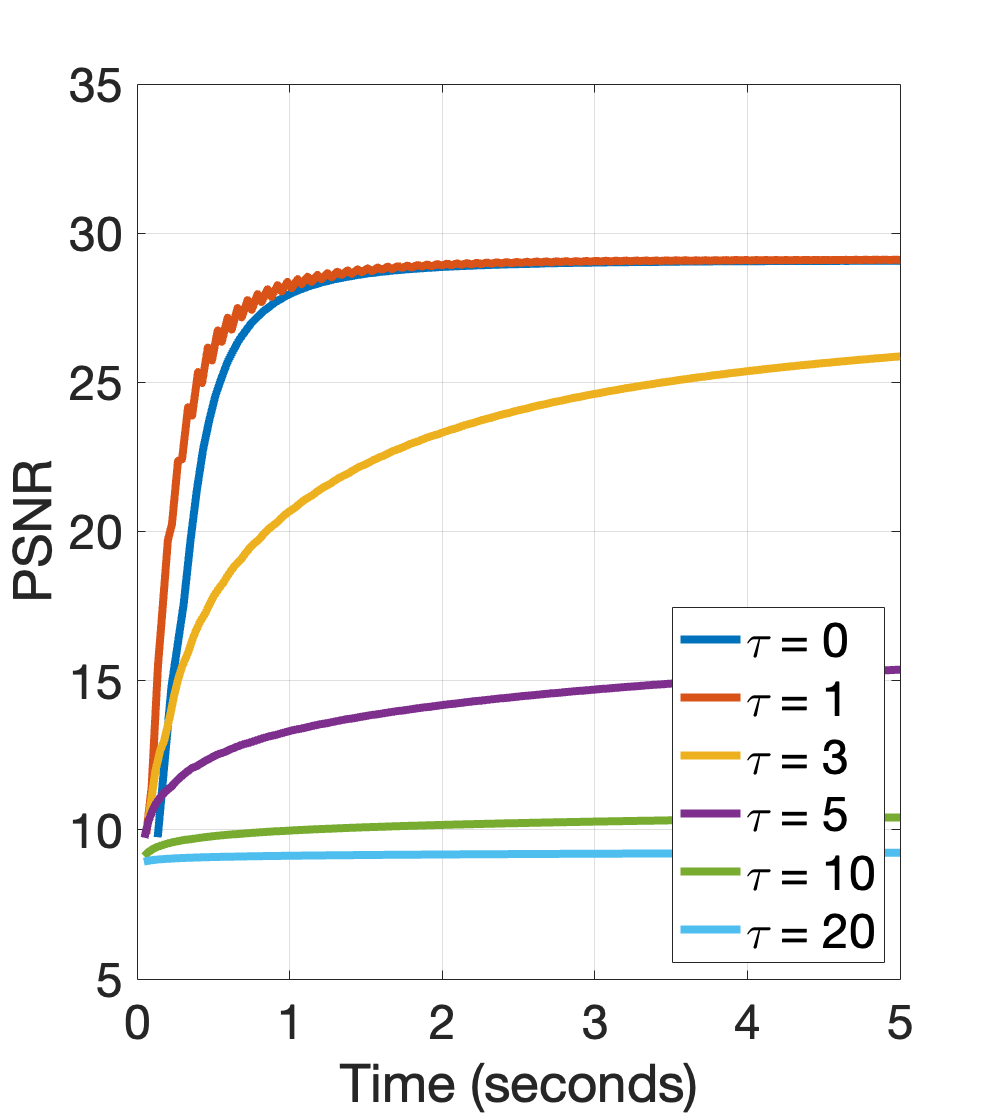}}	\\
	     	(c)  Transform $H$ 
        \end{minipage}
        \begin{minipage}[c]{0.32\linewidth}
		\centering
		{\includegraphics[scale = 0.13]{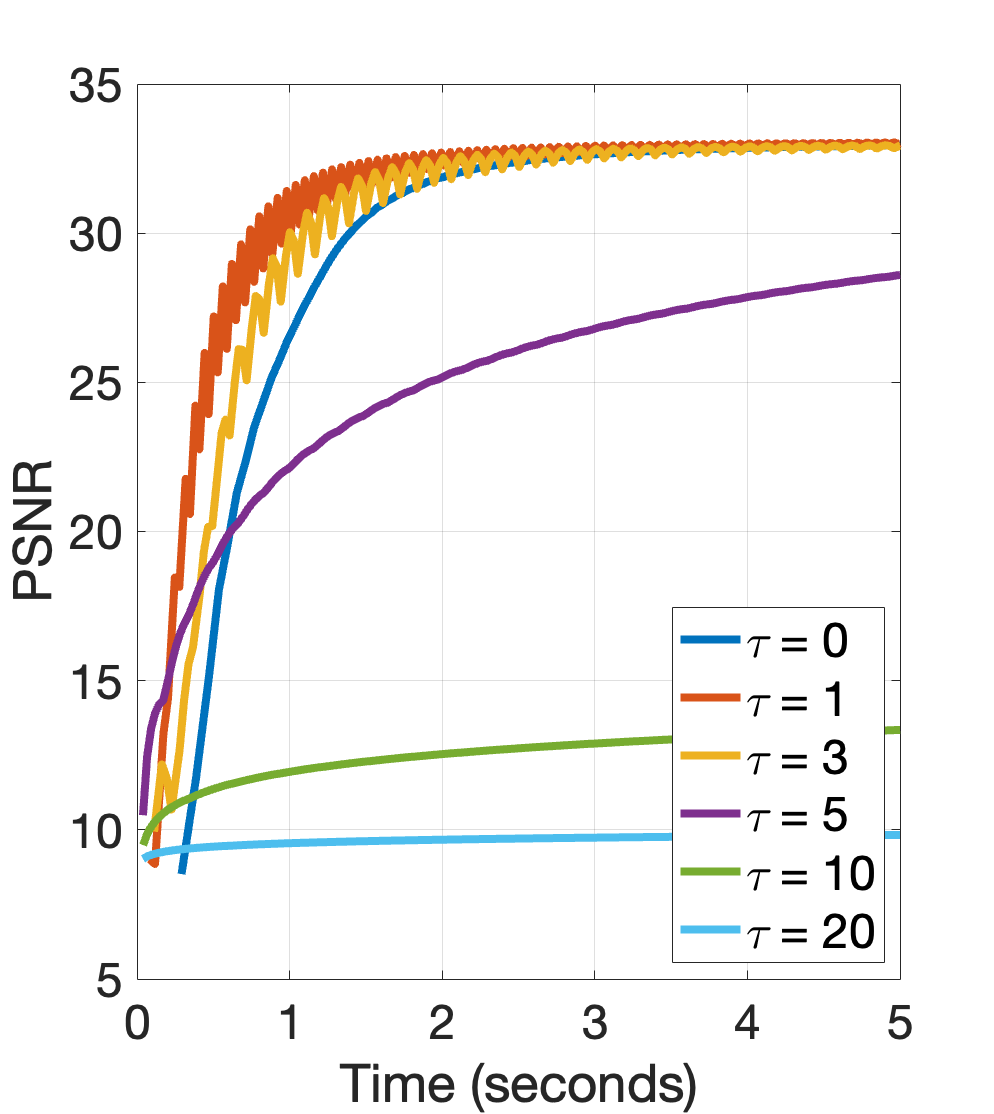}}	\\
	     	(d)  Transform $L$ 
        \end{minipage}
        \begin{minipage}[c]{0.32\linewidth}
		\centering
		{\includegraphics[scale = 0.13]{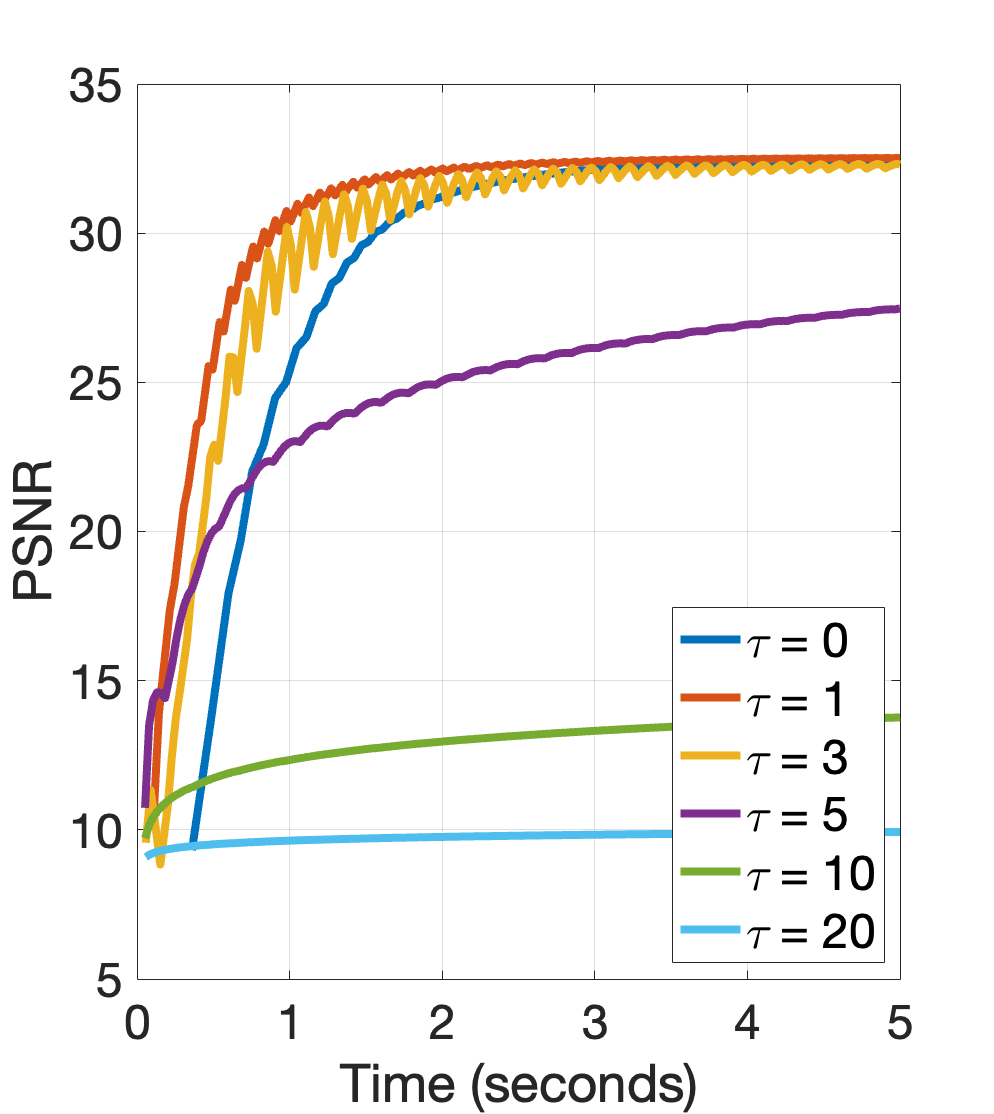}}	\\
	     	(e) Transform $G$ 	     
        \end{minipage}
            \caption{\label{FI1}Behaviors of PSNR values for the Cafe image.}
	\end{center}
\end{figure}

It can be observed from Figure \ref{FI1} that the PSNR values performed by Algorithm \ref{our AI} with $\tau=1$ increases to the possibly best PSNR value faster than the non-delayed counterpart ($\tau=0$) and other delayed bounds. Moreover, for the choices of the transform $L$ and $G$, we further notice that Algorithm \ref{our AI} with $\tau=1$ and $\tau=3$ increases to the possibly best PSNR value faster than the non-delayed counterpart ($\tau=0$). It is not surprisingly to observed that PSNR values performed by Algorithm \ref{our AI} with $\tau=10,$ and $20$ are not increasing well for all types of transforms.

Focusing on Algorithm \ref{our AI} with $\tau=1$ with different type of transforms, we notice that the transform $L$ yields the best PSNR values with the shortest computational runtime. It is slightly surprising that the results obtained by the transform $G$ are not better than the results obtained by $L$, even if it is the combination of $H$ and $L$. 
We show the visual restoration results of the Cafe image using FDSM with different type of transforms and  $\tau=1$ within $10$ seconds in Figure \ref{FFI1}.

\begin{figure}[H]
	\begin{center}
        \begin{minipage}[c]{0.32\linewidth}
		\centering
		{\includegraphics[scale = 0.40]{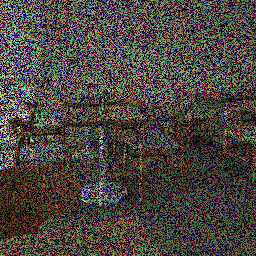}}	\\
	     	(a) Blurred image 	     
        \end{minipage}
        \begin{minipage}[c]{0.32\linewidth}
		\centering
		{\includegraphics[scale = 0.40]{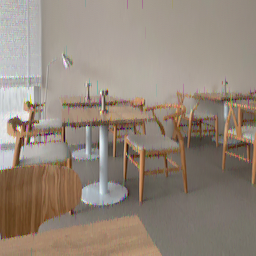}}	\\
	     	(b)  Transform $R$
        \end{minipage}
        \begin{minipage}[c]{0.32\linewidth}
		\centering
		{\includegraphics[scale = 0.40]{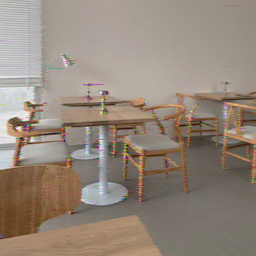}}	\\
	     	(c)  Transform $C$ 
        \end{minipage}

        \vspace{0.3cm}
        
        \begin{minipage}[c]{0.32\linewidth}
		\centering
		{\includegraphics[scale = 0.40]{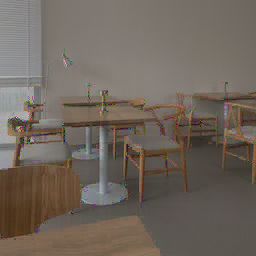}}	\\
	     	(d)  Transform $H$ 
        \end{minipage}
        \begin{minipage}[c]{0.32\linewidth}
		\centering
		{\includegraphics[scale = 0.40]{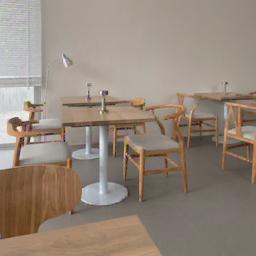}}	\\
	     	(e)  Transform $L$ 
        \end{minipage}
        \begin{minipage}[c]{0.32\linewidth}
		\centering
		{\includegraphics[scale = 0.40]{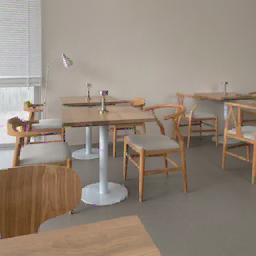}}	\\
	     	(f) Transform $G$ 	     
        \end{minipage}
            \caption{\label{FFI1} 
              The Cafe image restored  by FDSM with $\tau=1$ within $10$ seconds.}
	\end{center}
\end{figure}

\subsection{The Blue Lagoon Image}
\vspace{-0.2cm}
In this subsection, we also examine the experiment results in the same manner as the previous subsection with the different image, namely the Blue Lagoon image as shown in Figure \ref{FI-original} (b).

We also start with the influences of the parameters $a$ and $a_0$ given in the step-size sequence where the best choices of fine-tuning parameters $a$ and $a_0$ with the PSNR values and the numbers of iterations are shown in Table  \ref{TI2}. 

\begin{table}[H]
\centering
\small
\begin{tabular}{ccccccccccccc}
\toprule
\multirow{2}{*}{Transform} & \multicolumn{4}{c}{$\tau=0$}      & \multicolumn{4}{c}{$\tau=1$}      & \multicolumn{4}{c}{$\tau=3$}      \\
\cmidrule(rl){2-5} \cmidrule(rl){6-9} \cmidrule(rl){10-13}
                    & $a$   & $a_0$  & PSNR    & $\#$(iter) & $a$   & $a_0$  & PSNR    & $\#$(iter) & $a$   & $a_0$  & PSNR    & $\#$(iter) \\
\midrule
$R$                   & 0.6 & 0.3 & 21.1844 & 329  & 0.9 & 0.9 & 21.1593 & 356  & 0.9 & 0.9 & 18.1532 & 369  \\
$C$                   & 0.7 & 0.3 & 20.0585 & 332  & 0.9 & 0.9 & 20.0276 & 361  & 0.9 & 0.9 & 17.6748 & 371  \\
$H$                   & 0.9 & 0.2 & 19.7150 & 253  & 0.3 & 0.9 & 19.6889 & 308  & 0.9 & 0.9 & 18.2422 & 343  \\
$L$                  & 0.9 & 0.1 & 21.9373 & 292  & 0.2 & 0.7 & 21.9574 & 337  & 0.8 & 0.8 & 21.9401 & 358  \\
$G$                 & 0.9 & 0.1 & 21.5513 & 204  & 0.4 & 0.2 & 21.7118 & 272  & 0.9 & 0.6 & 21.4944 & 317 
\end{tabular}
\begin{tabular}{ccccccccccccc}
\toprule
\multirow{2}{*}{Transform} & \multicolumn{4}{c}{$\tau=5$}      & \multicolumn{4}{c}{$\tau=10$}     & \multicolumn{4}{c}{$\tau=20$}    \\
\cmidrule(rl){2-5} \cmidrule(rl){6-9} \cmidrule(rl){10-13}
                    & $a$   & $a_0$  & PSNR    & $\#$(iter) & $a$   & $a_0$  & PSNR    & $\#$(iter) & $a$   & $a_0$  & PSNR   & $\#$(iter) \\
\midrule
$R$                   & 0.9 & 0.9 & 13.4783 & 376  & 0.9 & 0.9 & 9.2611  & 380  & 0.9 & 0.9 & 7.8716 & 382  \\
$C$                   & 0.9 & 0.9 & 13.3464 & 377  & 0.9 & 0.9 & 9.2458  & 378  & 0.9 & 0.9 & 7.8690 & 379  \\
$H$                   & 0.9 & 0.9 & 12.6921 & 356  & 0.9 & 0.9 & 8.8117  & 368  & 0.9 & 0.9 & 7.7684 & 375  \\
$L$                  & 0.9 & 0.9 & 20.8128 & 367  & 0.9 & 0.9 & 11.4293 & 375  & 0.9 & 0.9 & 8.3185 & 380  \\
$G$                 & 0.9 & 0.9 & 20.1136 & 339  & 0.9 & 0.9 & 11.6457 & 361  & 0.9 & 0.9 & 8.3931 & 373 \\
\bottomrule
\end{tabular}
 \caption{\label{TI2}The best choices of parameters $a$ and $a_0$ with the PSNR values and the numbers of iterations for the Blue Lagoon image.}
\end{table}
Form Table \ref{TI2}, we notice that the trends of parameters $a$ and $a_0$ are in the same fashion to the results obtained in Table \ref{TI1}. For each the dictionary transform, we observe that Algorithm \ref{our AI} with $\tau=0, 1$ and $3$ similarly yield the best PNSR values.  Moreover, we observe that the dictionary transform $L$ also yields the highest PSNR values for the delay’s bounds $\tau=0, 1, 3$ and $5$. Again, the best PSNR value of $21.9574$ is obtained when the dictionary transform is $L$ and the delays' bound $\tau=1$.

We present the behaviors of PSNR values within $5$ seconds obtained from each dictionary transform and the delays' bound in the following Figures \ref{FI2}.

\begin{figure}[H]
	\begin{center}
        \begin{minipage}[c]{0.32\linewidth}
		\centering
		{\includegraphics[scale = 0.13]{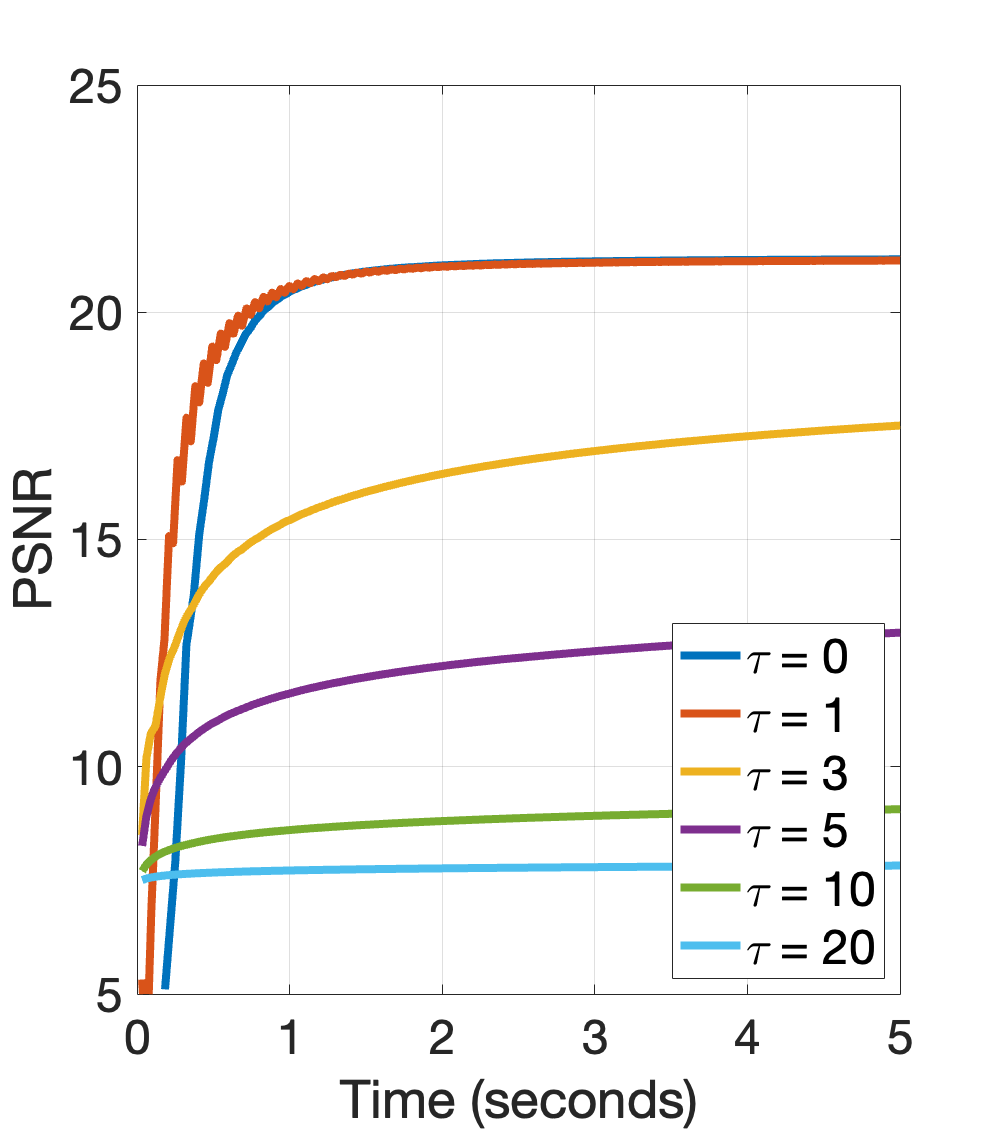}}	\\
	     	(a)  Transform $R$
        \end{minipage}
        \begin{minipage}[c]{0.32\linewidth}
		\centering
		{\includegraphics[scale = 0.13]{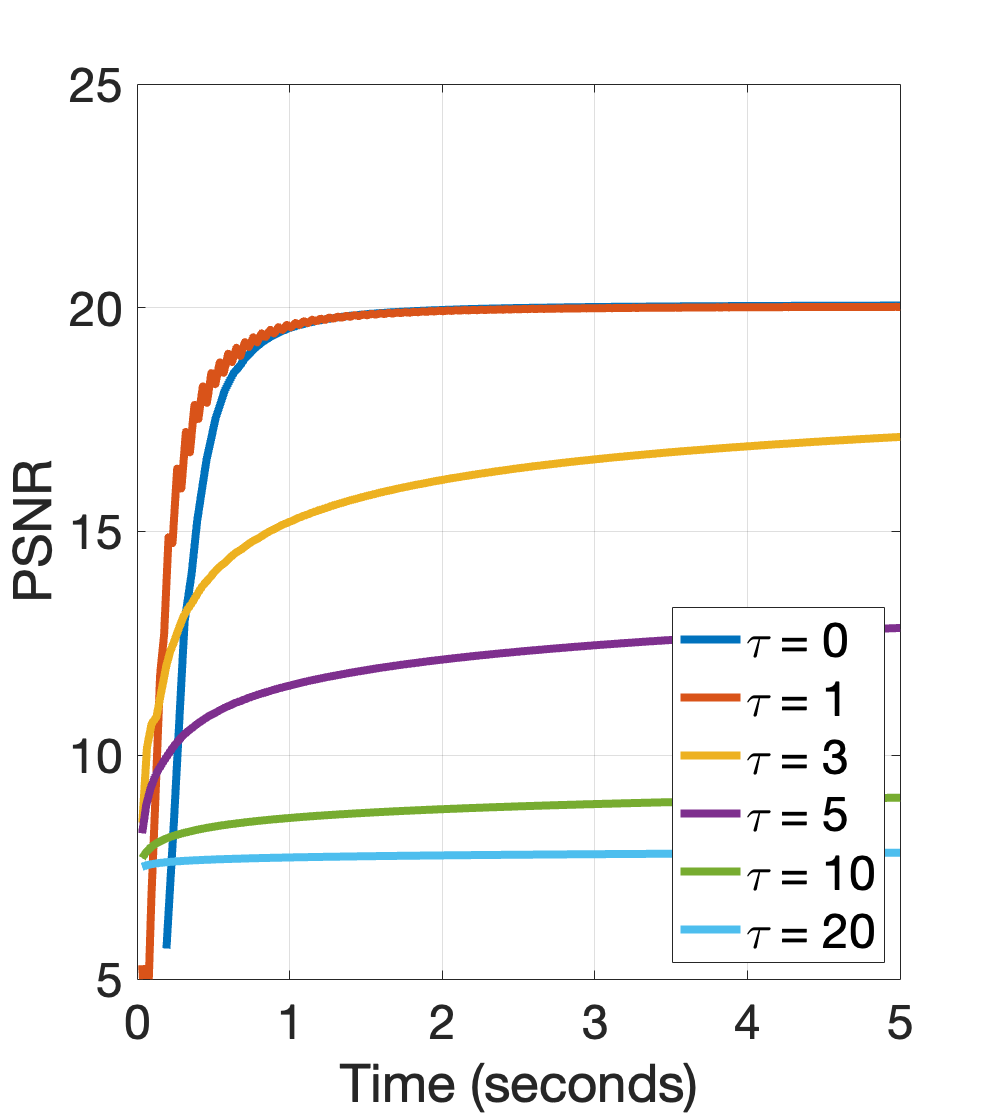}}	\\
	     	(b)  Transform $C$     
        \end{minipage}
        \begin{minipage}[c]{0.32\linewidth}
		\centering
		{\includegraphics[scale = 0.13]{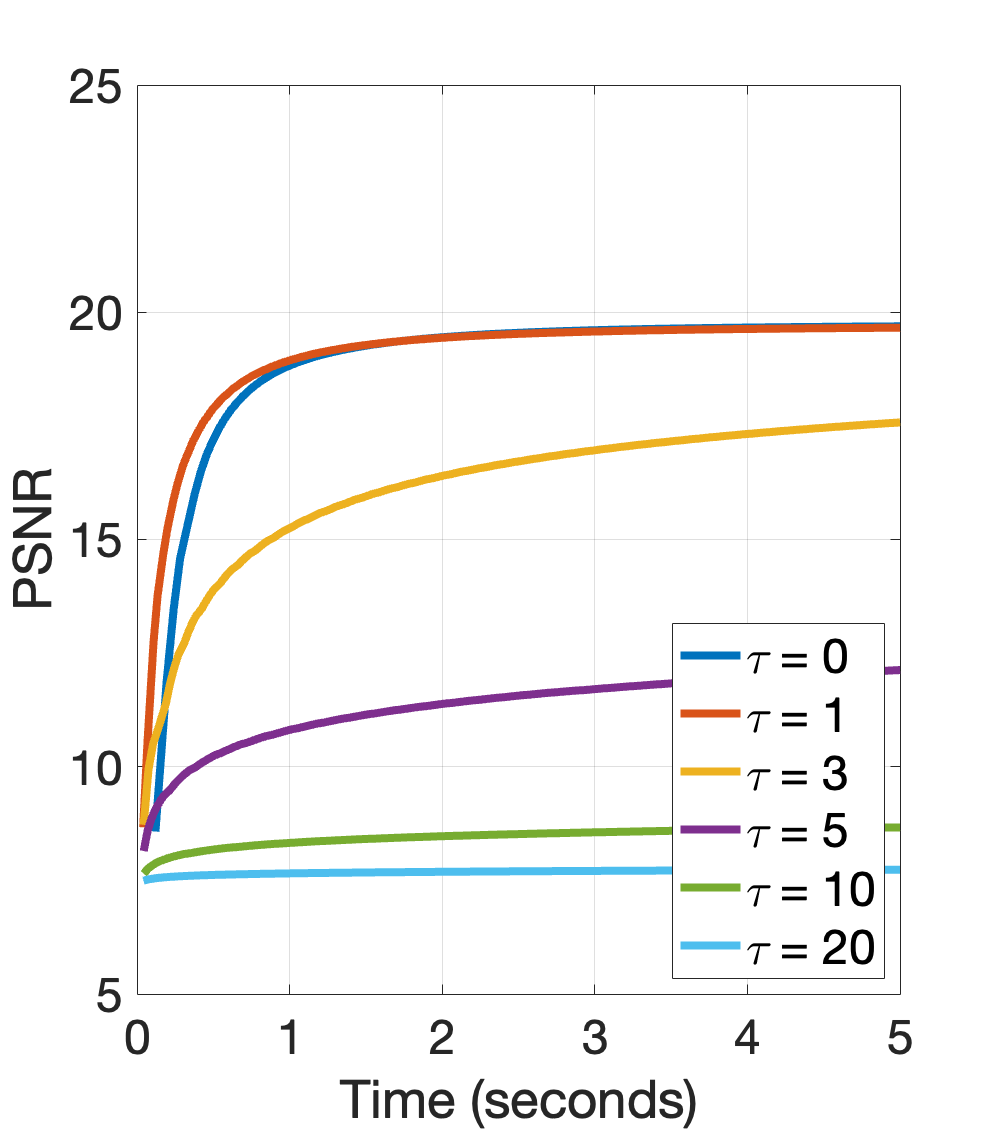}}	\\
	     	(c)  Transform $H$ 
        \end{minipage}
        \begin{minipage}[c]{0.32\linewidth}
		\centering
		{\includegraphics[scale = 0.13]{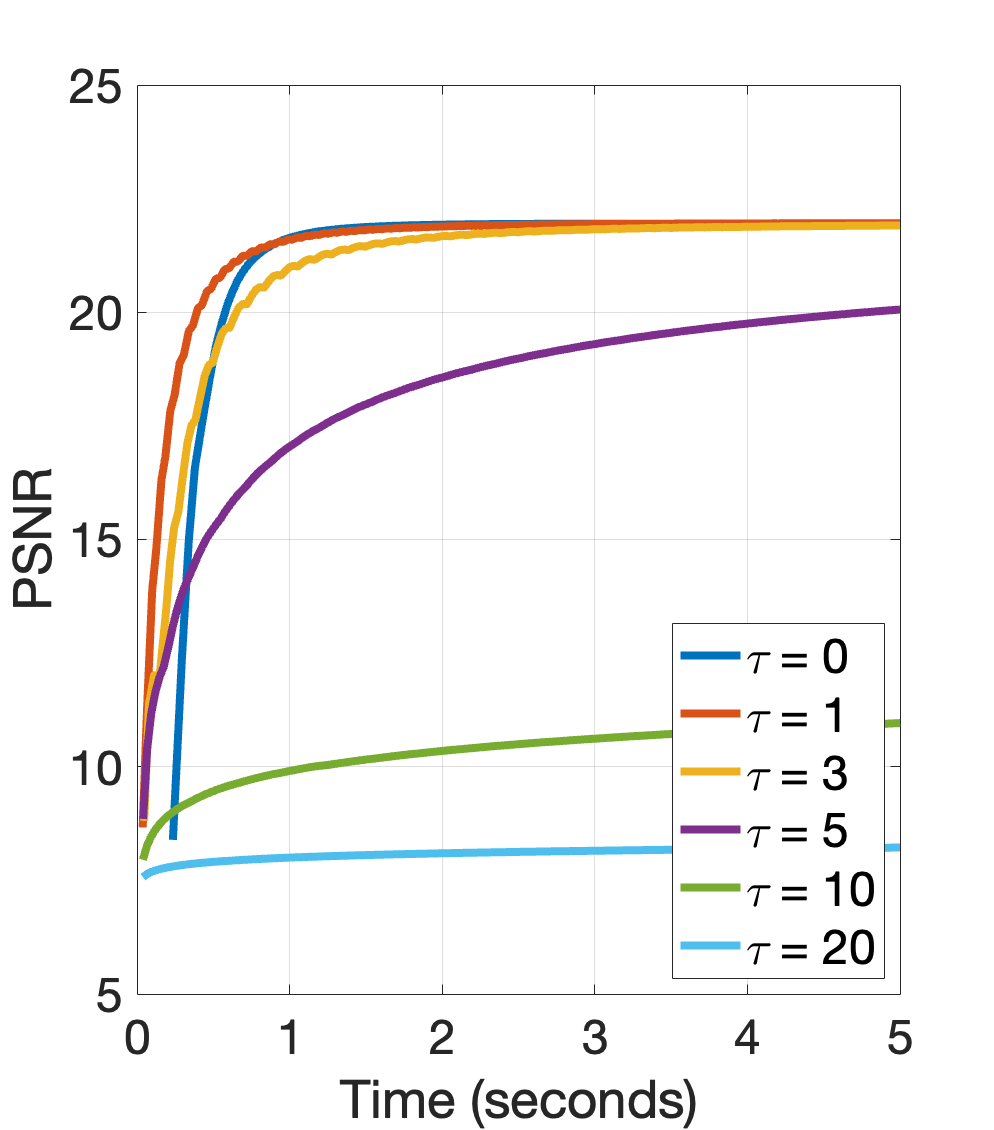}}	\\
	     	(d)  Transform $L$ 
        \end{minipage}
        \begin{minipage}[c]{0.32\linewidth}
		\centering
		{\includegraphics[scale = 0.13]{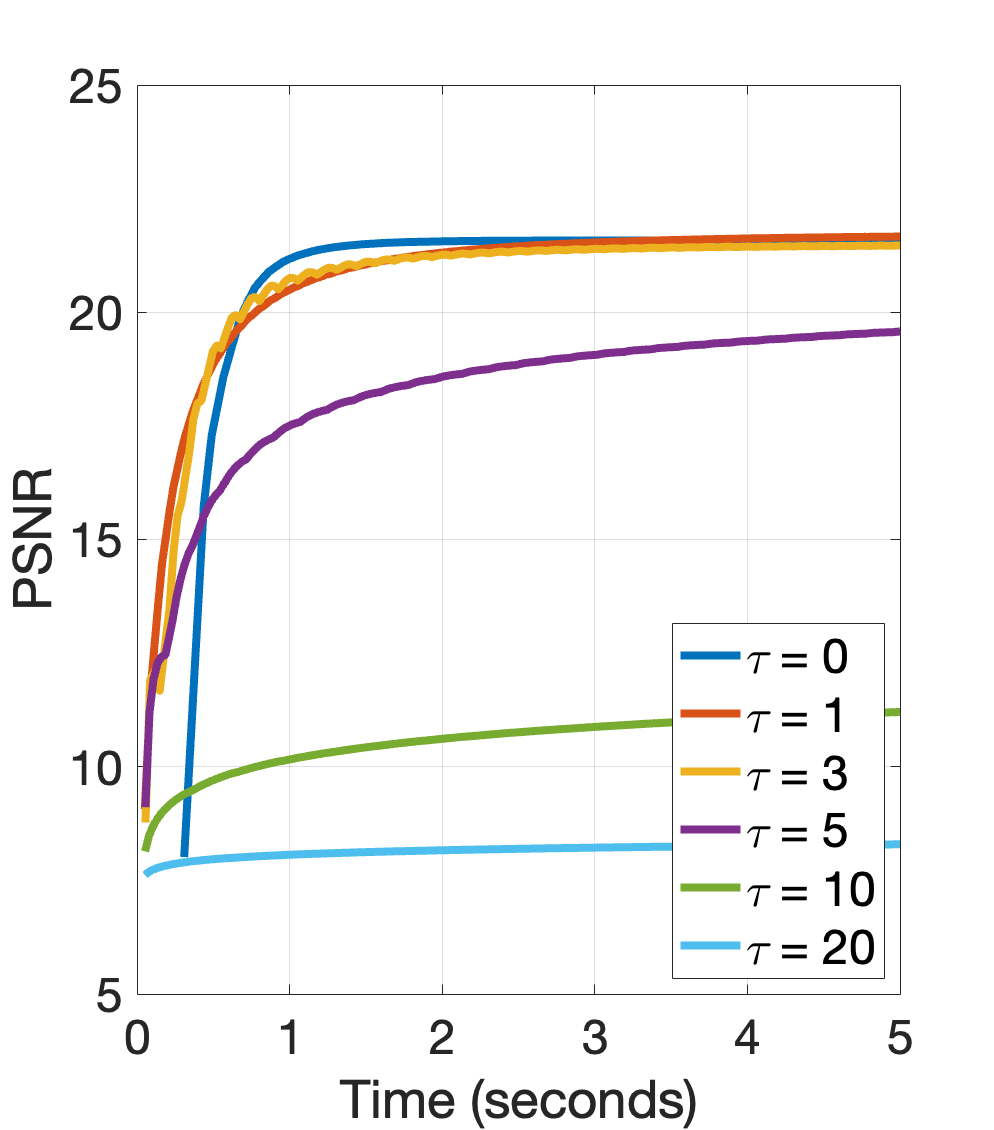}}	\\
	     	(e) Transform $G$ 	     
        \end{minipage}
            \caption{\label{FI2}Behaviors of PSNR values for the Blue Lagoon image.}
	\end{center}
\end{figure}

It is observed from Figure \ref{FI2} that, for overall experiment, Algorithm \ref{our AI} with $\tau=0, 1$ and  $3$ yield the PSNR behaviors in the same directions as in Figure \ref{FI1}. Again, for each the delayed bounds $\tau=0, 1, 3$ and  $5$, we notice that the transform $L$ gives the the best PSNR values with the shortest computational runtime comparing to other transforms. 
We present the visual restoration outcomes of the Blue Lagoon image obtained from FDSM with different type of transforms and $\tau=1$ at $10$ seconds in Figure \ref{FFI2}.

\begin{figure}[H]
	\begin{center}
        \begin{minipage}[c]{0.32\linewidth}
		\centering
		{\includegraphics[scale = 0.40]{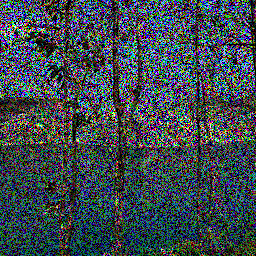}}	\\
	     	(a) Blurred image 	     
        \end{minipage}
        \begin{minipage}[c]{0.32\linewidth}
		\centering
		{\includegraphics[scale = 0.40]{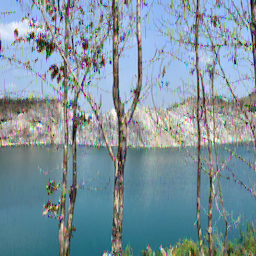}}	\\
	     	(b)  Transform $R$
        \end{minipage}
        \begin{minipage}[c]{0.32\linewidth}
		\centering
		{\includegraphics[scale = 0.40]{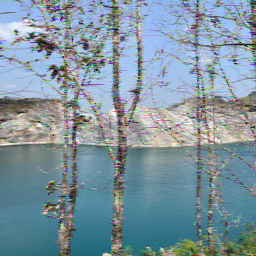}}	\\
	     	(c)  Transform $C$ 
        \end{minipage}

        \vspace{0.3cm}
        
        \begin{minipage}[c]{0.32\linewidth}
		\centering
		{\includegraphics[scale = 0.40]{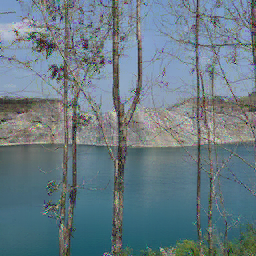}}	\\
	     	(d)  Transform $H$ 
        \end{minipage}       
        \begin{minipage}[c]{0.32\linewidth}
		\centering
		{\includegraphics[scale = 0.40]{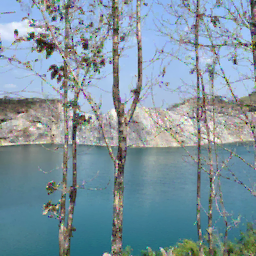}}	\\
	     	(e)  Transform $L$ 
        \end{minipage}
        \begin{minipage}[c]{0.32\linewidth}
		\centering
		{\includegraphics[scale = 0.40]{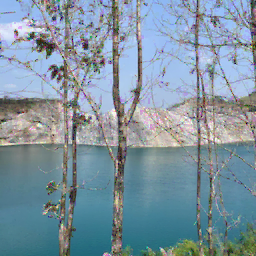}}	\\
	     	(f) Transform $G$ 	     
        \end{minipage}
            \caption{\label{FFI2} 
            The Blue Lagoon image restored by  FDSM with $\tau=1$ within $10$ seconds.}
	\end{center}
\end{figure}


\subsection{The Pagoda Image}

In this last experimental subsection, we also investigate the performance of Algorithm \ref{our AI} for various delayed bounds and various type of transforms. We focus on the Pagoda image which its original version is shown in  Figure  \ref{FI-original} (c). We also start by examine the best choices of parameters $a$ and $a_0$ with the PSNR values and the numbers of iterations in Table \ref{TI3}.

\begin{table}[H]
\centering
\small
\begin{tabular}{ccccccccccccc}
\toprule
\multirow{2}{*}{Transform} & \multicolumn{4}{c}{$\tau=0$}      & \multicolumn{4}{c}{$\tau=1$}      & \multicolumn{4}{c}{$\tau=3$}      \\
\cmidrule(rl){2-5} \cmidrule(rl){6-9} \cmidrule(rl){10-13}
                    & $a$   & $a_0$  & PSNR    & $\#$(iter) & $a$   & $a_0$  & PSNR    & $\#$(iter) & $a$   & $a_0$  & PSNR    & $\#$(iter) \\
\midrule
$R$                   & 0.5 & 0.2 & 21.2993 & 331  & 0.7 & 0.8 & 21.0779 & 364  & 0.9 & 0.9 & 18.8203 & 373  \\
$C$                   & 0.9 & 0.3 & 23.5388 & 326  & 0.8 & 0.9 & 23.3444 & 358  & 0.9 & 0.9 & 19.7886 & 370  \\
$H$                   & 0.8 & 0.2 & 20.9384 & 252  & 0.9 & 0.5 & 20.8965 & 307  & 0.9 & 0.9 & 19.6170 & 343  \\
$L$                  & 0.9 & 0.1 & 23.9296 & 254  & 0.3 & 0.3 & 23.7177 & 293  & 0.8 & 0.7 & 23.7608 & 313  \\
$G$                 & 0.9 & 0.1 & 23.5195 & 204  & 0.2 & 0.5 & 23.4738 & 275  & 0.9 & 0.6 & 23.1919 & 321 
\end{tabular}

\begin{tabular}{ccccccccccccc}
\toprule
\multirow{2}{*}{Transform} & \multicolumn{4}{c}{$\tau=5$}      & \multicolumn{4}{c}{$\tau=10$}      & \multicolumn{4}{c}{$\tau=20$}     \\
\cmidrule(rl){2-5} \cmidrule(rl){6-9} \cmidrule(rl){10-13}
                    & $a$   & $a_0$  & PSNR    & $\#$(iter) & $a$   & $a_0$  & PSNR    & $\#$(iter) & $a$   & $a_0$  & PSNR   & $\#$(iter) \\
\midrule                   
$R$                   & 0.9 & 0.9 & 14.4665 & 380  & 0.9 & 0.9 & 10.3832 & 383  & 0.9 & 0.9 & 9.0003 & 385  \\
$C$                   & 0.9 & 0.9 & 14.6539 & 377  & 0.9 & 0.9 & 10.4046 & 381  & 0.9 & 0.9 & 9.0031 & 383  \\
$H$                   & 0.9 & 0.9 & 13.7716 & 355  & 0.9 & 0.9 & 9.9290  & 368  & 0.9 & 0.9 & 8.8909 & 376  \\
$L$                  & 0.9 & 0.9 & 22.4006 & 321  & 0.9 & 0.9 & 12.4695 & 329  & 0.9 & 0.9 & 9.4391 & 332  \\
$G$                 & 0.9 & 0.9 & 22.0240 & 340  & 0.9 & 0.9 & 12.8467 & 361  & 0.9 & 0.9 & 9.5379 & 372 \\
\bottomrule
\end{tabular}
 \caption{\label{TI3}The best choices of parameters $a$ and $a_0$ with the PSNR values and the numbers of iterations for the Pagoda image.}
\end{table}
Again, it is also observed from Table \ref{TI3} that the behavior of $a$ and $a_0$ are in the similar fashions with two above results. We also notice that, for each $\tau=0, 1, 3$ and $5$, the transform $L$ also yields the best PSNR values, where the overall best PSNR value of $23.9296$ is observed for $\tau=0$. For the case $\tau=0$  and $1$, we observe that the transforms $C$ and $G$ yiled very similar results.
\vskip 2mm

Next, we present the behaviors of PSNR values within 5 seconds obtained from each dictionary
transform and the delays’ bound in Figures  \ref{FI3}.


\begin{figure}[H]
	\begin{center}
        \begin{minipage}[c]{0.32\linewidth}
		\centering
		{\includegraphics[scale = 0.13]{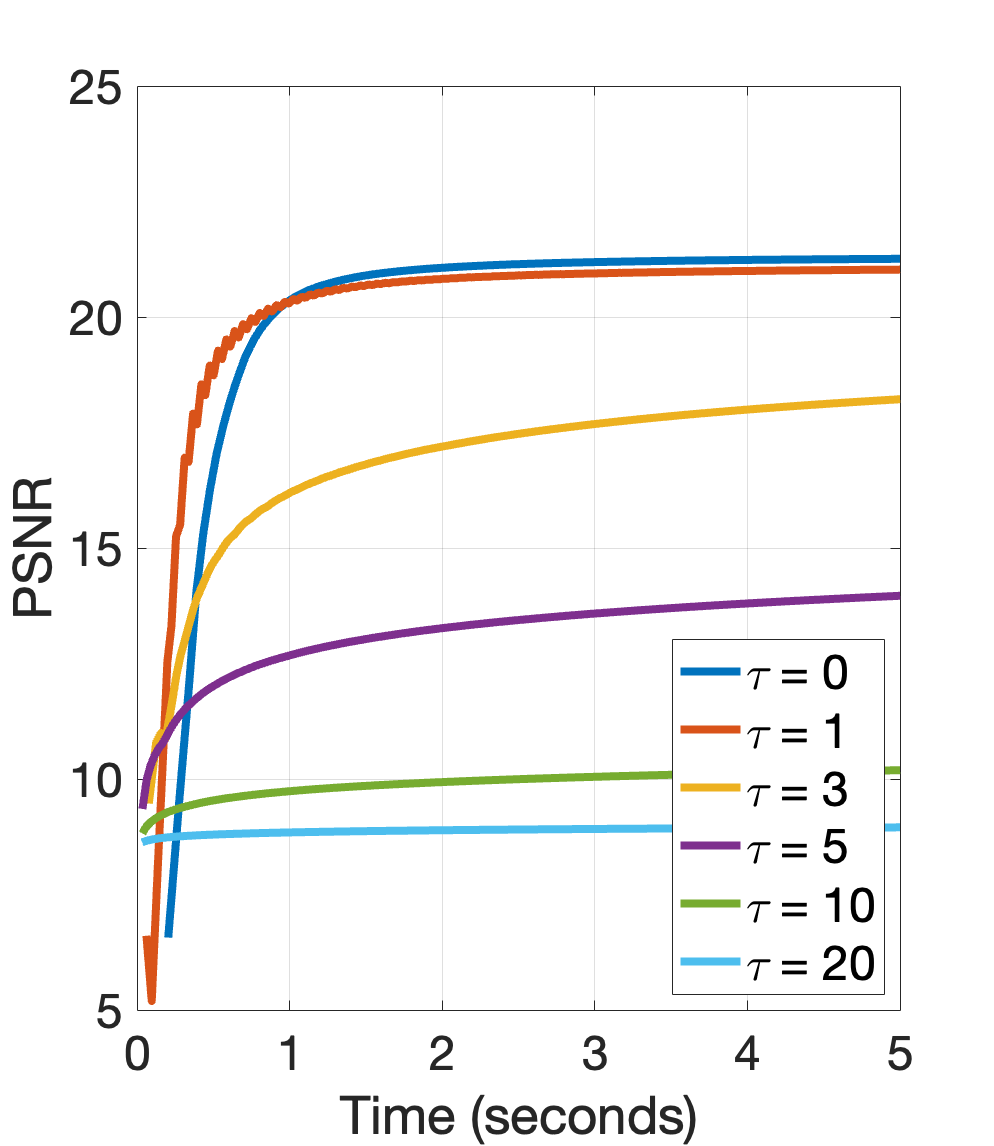}}	\\
	     	(a)  Transform $R$
        \end{minipage}
        \begin{minipage}[c]{0.32\linewidth}
		\centering
		{\includegraphics[scale = 0.13]{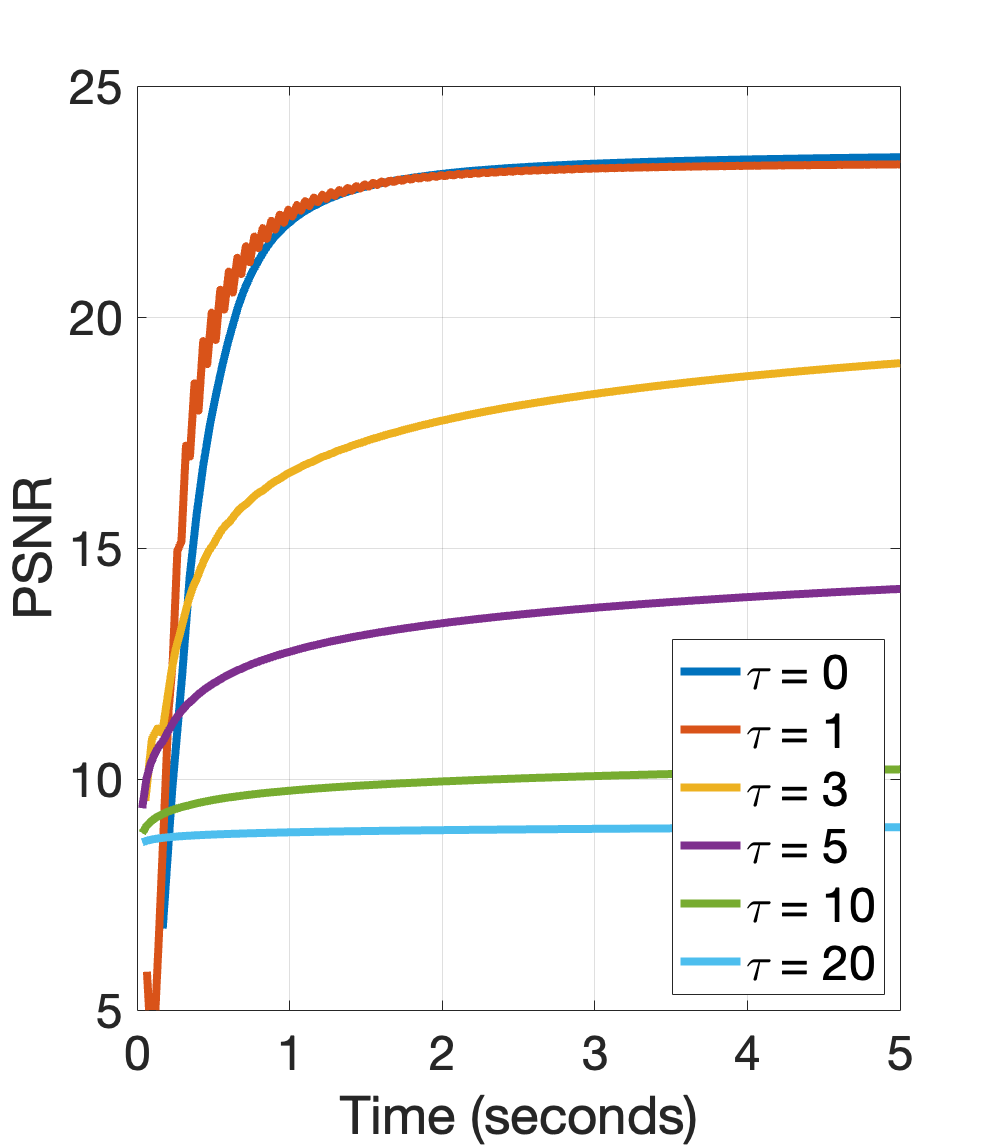}}	\\
	     	(b)  Transform $C$     
        \end{minipage}
        \begin{minipage}[c]{0.32\linewidth}
		\centering
		{\includegraphics[scale = 0.13]{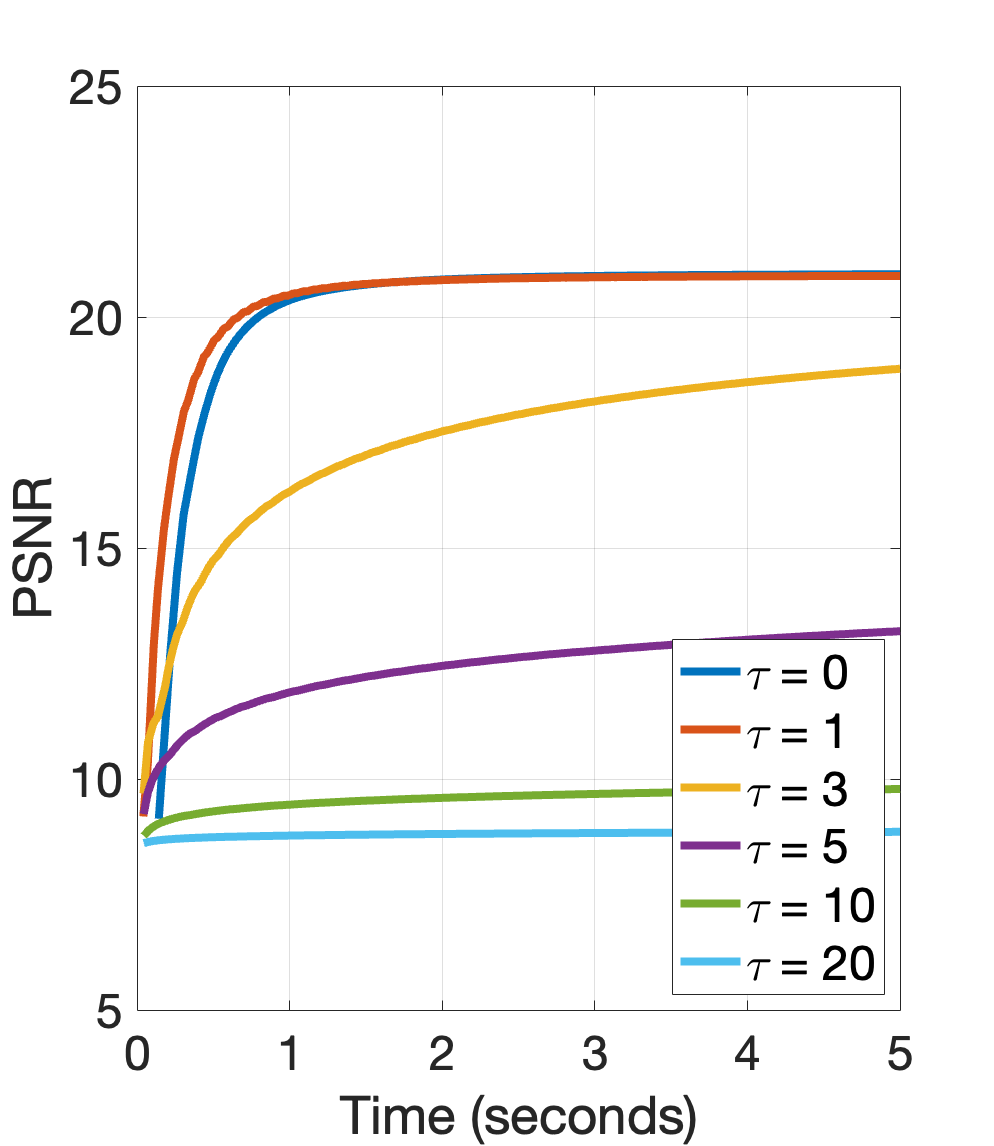}}	\\
	     	(c)  Transform $H$ 
        \end{minipage}
        \begin{minipage}[c]{0.32\linewidth}
		\centering
		{\includegraphics[scale = 0.13]{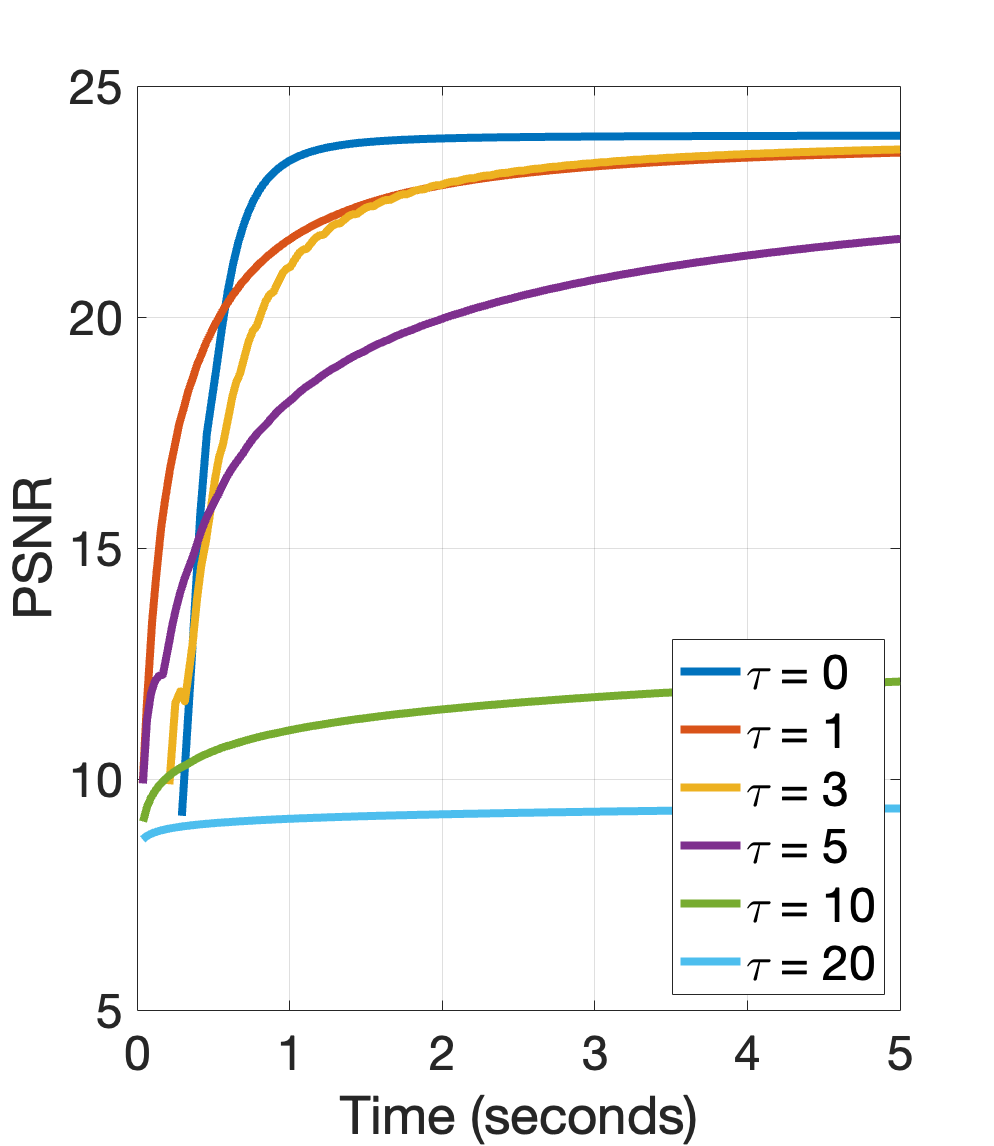}}	\\
	     	(d)  Transform $L$ 
        \end{minipage}
        \begin{minipage}[c]{0.32\linewidth}
		\centering
		{\includegraphics[scale = 0.13]{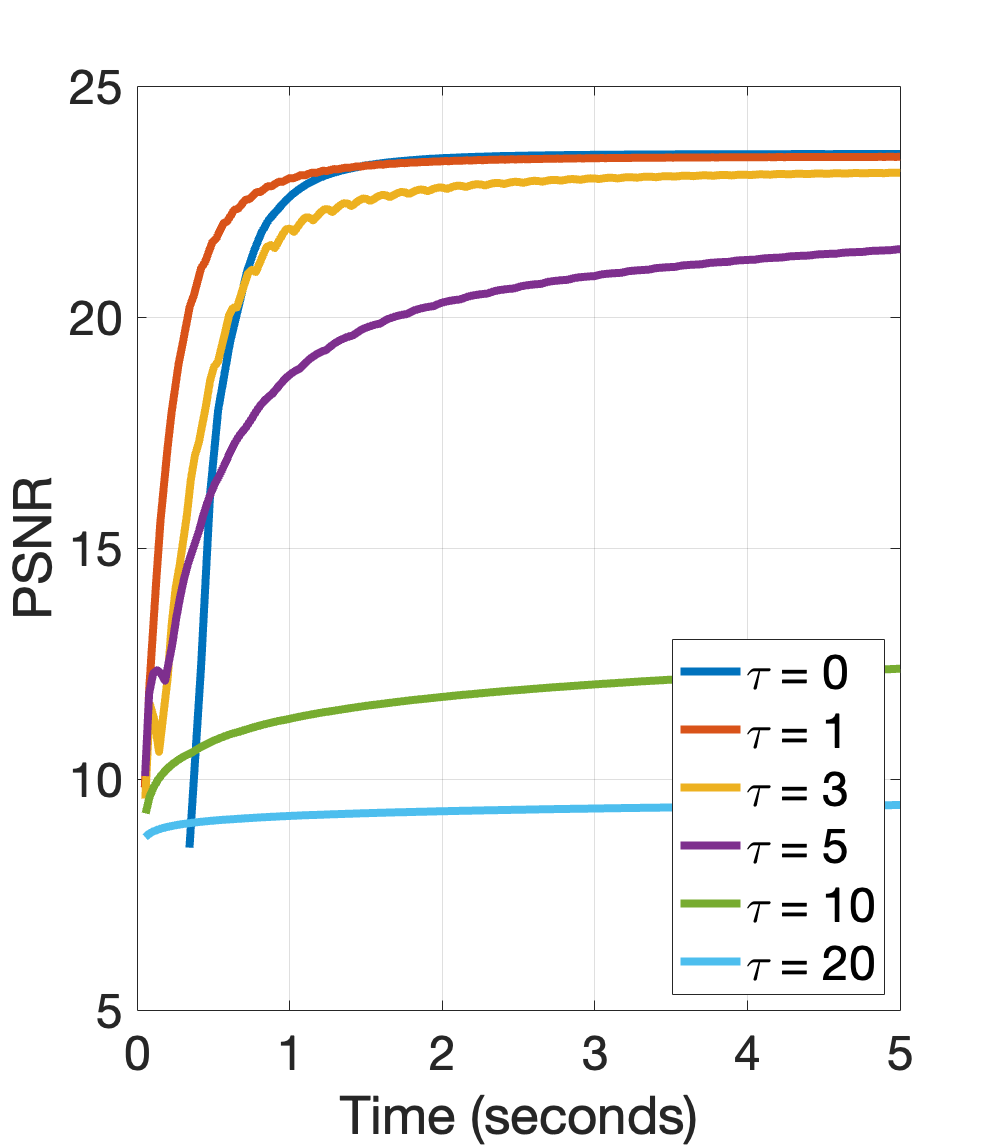}}	\\
	     	(e) Transform $G$ 	     
        \end{minipage}
            \caption{\label{FI3}Behaviors of PSNR values for the Pagoda image.}
	\end{center}
\end{figure}

It is observed from Figure \ref{FI3} that, for almost all transforms excepted for the transform $L$, Algorithm \ref{our AI} with $\tau=1$ increase rapidly than the other cases. For the case of transform $L$,  Algorithm \ref{our AI} with $\tau=0$ seems increasingly after a half of a second and, subsequently, is stable within a second. It is worth noting that the transform $C$ can give the very close results to the combining transforms $L$ and $G$, even if it is constructed in a very simple manner. 

From all of these numerical experiments, we observe that, for some tested images and some dictionary transforms, Algorithm \ref{our AI} with time-varying delays can yield the better reconstructed results than the non-delayed counterpart with the less subgradients' computations. This can superiority of the fixed-point delayed subgradient method than the traditional fixed-point subgradient method ($\tau=0$).
Finally, the visual restoration of the Pagoda  image, produced by FDSM with different type of transforms and $\tau=1$ within $10$ seconds, is presented in Figure \ref{FFI3}.

\begin{figure}[H]
	\begin{center}
        \begin{minipage}[c]{0.32\linewidth}
		\centering
		{\includegraphics[scale = 0.40]{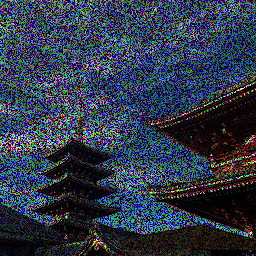}}	\\
	     	(a) Blurred image 	     
        \end{minipage}
        \begin{minipage}[c]{0.32\linewidth}
		\centering
		{\includegraphics[scale = 0.40]{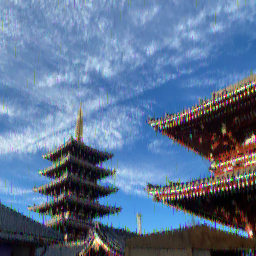}}	\\
	     	(b)  Transform $R$
        \end{minipage}
        \begin{minipage}[c]{0.32\linewidth}
		\centering
		{\includegraphics[scale = 0.40]{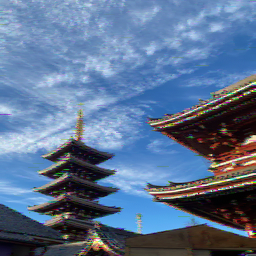}}	\\       
	     	(c)  Transform $C$ 
        \end{minipage}
        \begin{minipage}[c]{0.32\linewidth}
		\centering
		{\includegraphics[scale = 0.40]{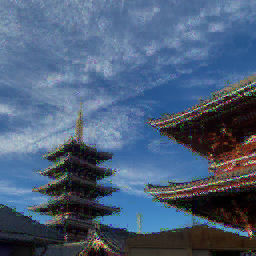}}	\\

        \vspace{0.3cm}
        
	     	(d)  Transform $H$ 
        \end{minipage}        
        \begin{minipage}[c]{0.32\linewidth}
		\centering
		{\includegraphics[scale = 0.40]{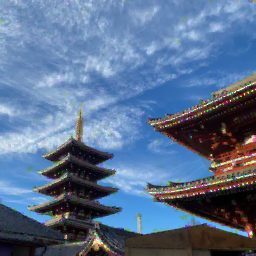}}	\\
	     	(e)  Transform $L$ 
        \end{minipage}
        \begin{minipage}[c]{0.32\linewidth}
		\centering
		{\includegraphics[scale = 0.40]{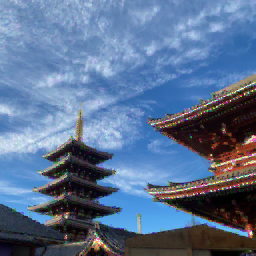}}	\\
	     	(f) Transform $G$ 	     
        \end{minipage}
            \caption{\label{FFI3} 
            The Pagoda  image restored by FDSM with $\tau=1$ within $10$ seconds.}
	\end{center}
\end{figure}

\section{Conclusions}

In this paper, first, we introduced the FDSM to solve the problem (\ref{main-pb}) by relaxing the updating iteration. This method can be applied to the convex objective function, which is both smooth and nonsmooth. A nonlinear operator in our method is firmly nonexpansive since the proof convergence of the method needs the property of firmly nonexpansive, nonexpansive and cutter operators.
Although Iiduka's (parallel) subgradient method can solve the problem (\ref{main-pb}), they were required to use the subgradient at the current iteration point to update the next iteration. Our method allowed the system to use the previously calculated subgradient for updates. The performance of our method with and without delays was discussed in the part of application.
   Second, under the control step size and certain assumptions, we showed a proximate of an optimal value and investigated the convergence of the generated sequence by the proposed method.
    Third, the FDSM was applied to solve the image inpainting problem. We compared the efficiency of the proposed method with varying delayed updates. We experimented with five different objective functions to ensure more accurate experimental results.

\section*{Acknowledgements}
O. Pankoon was supported by the Development and Promotion of Science and Technology Talents Project (DPST).

\section*{Funding information} This research was supported by the Fundamental Fund of Khon Kaen University. This research has received funding support from the National Science, Research and Innovation Fund or NSRF.

\section*{Conflict of interest}
The authors have no competing interests.

\section*{Data availability statement}
Data sharing is not applicable to this work as no datasets were generated or analyzed during this study.


\end{document}